\newcommand{\source}[1]{\hfill #1} 
\theoremstyle{plain}
\newtheorem{definition}{Definition}[section]
\newtheorem{theorem}{Theorem}[section]
\newtheorem{corollary}{Corollary}[theorem]
\newtheorem{lemma}[theorem]{Lemma}
\newtheorem{assumption}{Assumption}
\newtheorem{remark}[theorem]{Remark}
\DeclareMathAlphabet{\pazocal}{OMS}{zplm}{m}{n}
\def\cD{{\pazocal D}}
\def\cK{{\pazocal K}}
\def\cH{{\pazocal H}}
\def\cX{{\pazocal X}}
\def\cU{{\pazocal U}}
\def\cV{{\pazocal V}}
\def\ccA{{\cal A}}
\def\ccR{{\cal R}}
\def\Node{{\pazocal A}}
\def\Noder{{\pazocal R}}
\def\mOne{{\mathbbm{1}}}
\def\pr{{\textup{pr}}}
\def\d{{\mathrm{d}}}
\def\bal#1\eal{\begin{align}#1\end{align}}
\def\tLambda{{\widetilde{\Lambda}}}
\def\hLambda{{\widehat{\Lambda}}}
\newcommand{\nc}{\nonumber\\}
\newcommand{\nn}{\nonumber}
\newcommand{\pushright}[1]{\ifmeasuring@#1\else\omit\hfill$\displaystyle#1$\fi\ignorespaces}
\newcommand{\pushleft}[1]{\ifmeasuring@#1\else\omit$\displaystyle#1$\hfill\fi\ignorespaces}
\begin{document}

\begin{frontmatter}
\title{Consistency of survival tree and forest models: splitting bias and correction\thanksref{T1}}
\runtitle{Splitting Bias and Consistency of Survival Forest}
\thankstext{T1}{This work is funded in part by NIH grant P01 CA142538 and NSF grant DMS-1407732}

\begin{aug}
\author{\fnms{Yifan} \snm{Cui} \thanksref{u1}\ead[label=e1]{cuiy@live.unc.edu}},
\author{\fnms{Ruoqing} \snm{Zhu} \thanksref{c1, u2}\ead[label=e2]{rqzhu@illinois.edu}}
\author{\fnms{Mai} \snm{Zhou} \thanksref{u3}\ead[label=e3]{mai@ms.uky.edu}}
\and
\author{\fnms{Michael R.} \snm{Kosorok}\thanksref{u1}
\ead[label=e4]{kosorok@unc.edu}}

\thankstext{c1}{The corresponding author}

\runauthor{Y. Cui et al.}

\affiliation{University of North Carolina at Chapel Hill \thanksmark{u1}, University of Illinois at Urbana-Champaign \thanksmark{u2}, University of Kentucky \thanksmark{u3}}

\address{Department of Statistics and Operations Research,\\
University of North Carolina at Chapel Hill,\\
Chapel Hill, NC 27599\\
\printead{e1}}

\address{Department of Statistics, \\
University of Illinois Urbana-Champaign, \\
Champaign, IL 61820\\
\printead{e2}}

\address{Department of Statistics,\\
University of Kentucky, Lexington, \\
KY 40506-0027\\
\printead{e3}}

\address{Department of Biostatistics,\\
University of North Carolina at Chapel Hill,\\
Chapel Hill, NC 27599\\
\printead{e4}}

\end{aug}

\begin{abstract}
Random survival forest and survival trees are popular models in statistics and machine learning. However, there is a lack of general understanding regarding consistency, splitting rules and influence of the censoring mechanism. In this paper, we investigate the statistical properties of existing methods from several interesting perspectives. First, we show that traditional splitting rules with censored outcomes rely on a biased estimation of the within-node failure distribution. To exactly quantify this bias, we develop a concentration bound of the within-node estimation based on non i.i.d. samples and apply it to the entire forest. Second, we analyze the entanglement between the failure and censoring distributions caused by univariate splits, and show that without correcting the bias at an internal node, survival tree and forest models can still enjoy consistency under suitable conditions. In particular, we demonstrate this property under two cases: a finite-dimensional case where the splitting variables and cutting points are chosen randomly, and a high-dimensional case where the covariates are weakly correlated. Our results can also degenerate into an independent covariate setting, which is commonly used in the random forest literature for high-dimensional sparse models. However, it may not be avoidable that the convergence rate depends on the total number of variables in the failure and censoring distributions. Third, we propose a new splitting rule that compares bias-corrected cumulative hazard functions at each internal node. We show that the rate of consistency of this new model depends only on the number of failure variables, which improves from non-bias-corrected versions. We perform simulation studies to confirm that this can substantially benefit the prediction error.
\end{abstract}

\begin{keyword}[class=MSC]
\kwd{62G20}
\kwd{62G08}
\kwd{62N01}
\end{keyword}

\begin{keyword}
\kwd{Random Forests}
\kwd{Survival Analysis}
\kwd{Consistency}
\kwd{Adaptive Concentration}
\kwd{Bias Correction}
\end{keyword}

\end{frontmatter}

\section{Introduction}\label{sec:intro}

Random forests \citep{breiman2001random} is one of the most popular machine learning tools. Many extensions of random forests \citep{rodriguez2006rotation, ishwaran2008random, chipman2010bart, menze2011oblique} have seen tremendous success in statistical and biomedical related problems \citep{liu2008cancer, rice2010cancer, bonato2010bayesian, green2012modeling, ofek2013improving, starling2014unexpected, huang2016longitudinal} in addition to many applications to artificial intelligence and other scientific fields.

The main advantage of forest- and tree-based \cite{breiman1984classification} models is their nonparametric nature. However, the theoretical properties have not been fully understood yet to date, even in regression settings, although there has been a surge of research on understanding the statistical properties of random forests in classification and regression. \cite{lin2006random} is one of the early attempts to connect random forests to nearest neighbor predictors. Later on, a series of works including \cite{biau2008consistency, biau2012analysis, genuer2012variance} and \cite{mentch2014ensemble} established theoretical results on simplified tree-building processes or specific aspects of the model. More recently, \cite{zhu2015reinforcement} established consistency results based on an improved splitting rule criteria; \cite{wager2014confidence} analyzed the confidence intervals induced from a random forest model; \cite{linero2016bayesian} established connections with Bayesian variable selection in the high dimensional setting; \cite{scornet2015consistency} showed consistency of the original random forests model on an additive model structure; and \cite{wager2015adaptive} studied the variance component of random forests and established corresponding concentration inequalities. For a more comprehensive review of related topics, we refer to \cite{biau2016random}.

In this paper, we focus on the theoretical properties of a particular type of tree- and forest-model, in which the outcomes are right-censored survival data \cite{fleming2011counting}. Censored survival data are frequently seen in biomedical studies when the actual clinical outcome may not be directly observable due to early dropout or other reasons. Random forest survival models have been developed to handle censored outcomes, including \citep{hothorn2004bagging, hothorn2005survival, ishwaran2008random, zhu2012recursively, steingrimsson2016doubly} and many others. However, there are few established theoretical results despite the popularity of these methods in practice, especially in genetic and clinical studies. For a general review of related topics, including single-tree based survival models, we refer to \cite{bou2011review}. To the best of our knowledge, the only consistency result to date is given in \cite{ishwaran2010consistency} who considered the setting where all predictors are categorical, while some other results are based on augmented outcomes which transform the problem to a fully observed regression model \cite{steingrimsson2017censoring}.

Our analysis provides insights into the consistency of survival forests and trees in general settings. In particular, we investigate whether existing methodologies enjoys consistency when covariates are weakly correlated if the splitting rule is searched by comparing the survival distribution of two potential child nodes. The answer is mixed because a biased selection of the splitting rule may occur if there are dependencies between the failure and censoring variables. This can, of course, be overcome in a finite dimensional case if the splitting rule is data-independent \cite{biau2012analysis, klusowski2018complete}, which leads to consistency. We further show that with suitable conditions, especially when the failure distribution signal size is sufficiently large, a marginal screening type of splitting rule can also lead to a consistent model. However, the consistency depends on the total dimension of variables that are involved in the failure and the censoring models.
Even when the covariates are uniformly distributed, as long as the failure and censoring times are not marginally independent, splitting on censoring covariates may not be avoidable due to entanglement between the failure and censoring distribution. This is mainly caused by univariate splits, and this understanding has never been formulated and analyzed in the literature. Motivated by this result, we propose a new bias-correction procedure that actively selects the best splitting variable at each internal node without the influence of the censoring distribution. This establishes a connection with existing methodology developments such as \cite{hothorn2005survival, steingrimsson2017censoring}, that converts censored observations to fully observed ones through the inverse probability of censoring weighting. However, our proposed splitting rule is much more general in the sense that it compares the distribution of survival time in the two potential child nodes, rather than focusing on the mean survival differences. We further show that with this new approach, we are able to improve the rate of consistency to depend on the dimension of the failure time model. Numerical results show that this yields a much-improved model fitting with reduced prediction error compared with its non-corrected counterparts.

The remainder of this paper is organized as follows to convey our findings: in Section \ref{sec:survtree}, we introduce tree-based survival models and some basic notation. Section \ref{sec:flaws} is devoted to demonstrating a fundamental property of the survival tree model associated with splitting rule selection and terminal node estimation. A concentration inequality of the Nelson-Aalen \cite{aalen1978nonparametric} estimator based on non-identically distributed samples is established. Utilizing this result, we derive adaptive concentration bounds of the entire model in Section \ref{sec:adaptive}. Furthermore, in Sections \ref{sec:consistency1} and \ref{sec:consistency2}, we establish consistency for two particular choices of splitting rules: a finite dimensional case with a random splitting rule and an infinite dimensional case with data adaptive marginal screening. Finally, in Section \ref{sec:biascorrect}, we propose a bias-corrected version of the random survival forest and compare that to the non-corrected versions and confirm our analysis. Further discussions are provided in Section \ref{sec:disc}. Details of proofs are given in the appendices, and a summary of notation is given before the appendices for convenience.

\section{Survival tree and forest models}\label{sec:survtree}

The essential element of tree-based survival models is recursive partitioning. A $d$-dimensional feature space ${\cX}$ is partitioned into terminal nodes. For a single tree model, we denote the collection of these terminal nodes as $\ccA = \{\Node_u\}_{u \in \cU}$, where $\cU$ is a set of indices, $\cX = \bigcup_{u \in \cU} \Node_u$ and $\Node_u \cap \Node_l = \emptyset$ for any $u\neq l$. We also call $\ccA$ a partition of the feature space $\cX$. In a traditional tree-building process \cite{breiman1984classification}, binary splitting rules are used. Hence all terminal node are (hyper)rectangles, i.e., $\Node = \bigotimes_{j=1}^d (a_j, b_j]$. In other examples, linear combination splits \citep{menze2011oblique, krketowska2004dipolar, zhu2015reinforcement} may result in more complex structures of terminal nodes. However, regardless of the construction of the trees, the terminal node estimations are obtained by treating the within-node observations as a homogeneous group. Before giving a general algorithm of tree-based survival models, we first introduce some notation.

Following the standard notation in the survival analysis literature, let $\cD_n = \{X_i, Y_i, \delta_i\}_{i=1}^n$ be a set of $n$ i.i.d. copies of the covariates, observed survival time, and censoring indicator, where the observed survival time $Y_i = \min(T_i, C_i)$, and $\delta_i = \mOne(T_i \leq C_i)$. We assume that each $T_i$ follows a conditional distribution $F_i(t) = \pr(T_i \leq t \mid X_i)$, where the survival function is denoted by $S_i(t) = 1 - F_i(t)$, the cumulative hazard function (CHF) $\Lambda_i(t) = - \log\{S_i(t)\}$, and the hazard function $\lambda_i(t) =\d\Lambda_i(t)/\d t$. The censoring time $C_i$ follows the conditional distribution $G_i(t) = \pr(C_i \leq t \mid X_i)$, where a non-informative censoring mechanism, $T_i \perp C_i \mid X_i$, is assumed.

In tree-based survival models, terminal node estimation is a crucial part. For any node $\Node_u$, this can be obtained through the Kaplan-Meier \citep{kaplan1958nonparametric} estimator for the survival function or the Nelson-Aalen \citep{nelson1969hazard, aalen1978nonparametric} estimator of the CHF based on the within-node samples. Our focus in this paper is on the following Nelson-Aalen estimator
\bal
\hLambda_{\Node_u,n}(t) =&~ {\textstyle\sum}_{s \leq t} \frac{\sum_{i=1}^n \mOne(\delta_i = 1)\mOne(Y_i = s)\mOne(X_i \in \Node_u)}{\sum_{i=1}^n \mOne(Y_i \geq s)\mOne(X_i \in \Node_u)}, \label{eqn:NA}
\eal
and the associated Nelson-Altshuler estimator \cite{altshuler1970theory} for the survival function when needed:
\bal
\widehat S_{\Node_u,n}(t) = \exp \big\{ - \hLambda_{\Node_u,n}(t) \big\}. \nonumber
\eal

A survival tree model yields a collection of doublets $\{\Node_u, \hLambda_{\Node_u,n}\}_{u \in \cU}$. In a survival forest model \citep{ishwaran2008random, zhu2012recursively}, a set of $B$ trees are fitted. In practice, $B = 1000$ is used in the popular \texttt{R} package \texttt{randomForestSRC} as the default value. Hence, a collection of partitions $\{ \{\Node_u^b, \hLambda_{\Node_u^b,n}\}_{u \in \cU_b} \}_{b = 1}^B$ is constructed. These trees are constructed with a bootstrap sampling mechanism or with the entire training data, with various types of additional randomness, such as \citep{geurts2006extremely}. To facilitate later arguments, we conclude this section by providing a high-level outline (Algorithm \ref{alg:forests}) of a survival forest model, while other details are deferred to later sections.

\begin{algorithm} 
\SetAlgoLined
\caption{Pseudo algorithm for survival forest models} \label{alg:forests}
\KwIn{Training dataset $\cD_n$, terminal node size $k$, number of trees $B$;}
\For{$b = 1$ \textbf{to} $B$}
{
\ShowLn Initiate $\Node = \cX$, a bootstrap sample $\cD_n^b$ of $\cD_n$, $\cK_b = \emptyset$, $u = 1$\;
\ShowLn At a node $\Node$, if $\sum_{X_i \in \cD_n^b} \mOne(X_i \in \Node) < k$, proceed to Line 5. Otherwise, construct a splitting rule such that $\Node = \Node_\text{left} \cup \Node_\text{right}$, where $\Node_\text{left} \cap \Node_\text{right} = \emptyset$. \;
\ShowLn Send the two child nodes $\Node_\text{left}$ and $\Node_\text{right}$ to Line 3 separately\;
\ShowLn Conclude the current node $\Node$ as a terminal node $\Node_u^b$, calculate $\widehat \Lambda_{\Node_u^b,n}$ using the within-node data, and update $\cK_b = \cK_b \cup \{u\}$ and $u = u+1$\;
}
\Return $\{\{\Node_u^b, \widehat \Lambda_{\Node_u^b,n}\}_{u \in \cK_b} \}_{b=1}^B$\
\end{algorithm}

\section{Biasedness of splitting rules}\label{sec:flaws}

One central idea throughout the survival tree and forest literature is to construct certain goodness-of-fit statistics that evaluate the impurity reduction across many candidate splitting rules. The best splitting rule is then selected and implemented to partition the node. This essentially resembles the idea in a regression tree setting where the mean differences or equivalently the variance reduction is used as the criterion. The most popular criterion in survival tree models is constructed through the log-rank statistic \citep{gordon1985tree, ciampi1986stratification, leblanc1992relative, fan2006trees, ishwaran2008random, zhu2012recursively} and other nonparametric comparisons of two curves, such as the Kolmogorov-Smirnov, Wilcoxon-Gehan and Tarone-Ware statistics \citep{ciampi1989recpam, segal1988regression}. However, these methods are generally believed to be biased because they do not appropriately consider the censoring distribution. Hence, other ideas are motivated, such as likelihood or model-based approaches \citep{ciampi1987recursive, davis1989exponential, loh1991survival, ahn1994tree, su2004multivariate, fan2006trees}, inverse probability of censoring weighting (IPCW) \citep{molinaro2004tree, hothorn2005survival}, double robust correction of the observed data \cite{steingrimsson2017censoring}, and others \cite{zhang1995splitting, krketowska2006random}. \cite{bou2011review} provides a comprehensive review of the methodological developments of survival tree models.

The literature on the theoretical analysis of survival tree and forest methods is relatively sparse. One of the more recent general results is \cite{ishwaran2010consistency}, who established uniform consistency of random survival forests \citep{ishwaran2008random} by assuming a discrete feature space, which can happen, for example, when the covariates are genotyping data. The idea can be extended to many other specific survival tree models. However, the discrete distribution assumption of the feature space is not satisfied in most applications. The major difficulty of the theoretical developments in a general setting is the highly complex nature of the splitting rules and their interference with the entire tree structure.

\subsection{Within-node estimation\label{sec:withinnode}}

To begin our analysis, we start by investigating the Kaplan-Meier (KM) and the Nelson-Altshuler (NA) estimators of the survival function. There are two main reasons that we revisit these classical methods: first, these methods are wildly used for terminal node estimation in fitted survival trees. Hence, the consistency of any survival tree model inevitably relies on their asymptotic behavior; second, the most popular splitting rules, such as the log-rank, Wilcoxon-Gehan and Tarone-Ware statistics, are all essentially comparing the KM curves across the two potential child nodes, which again plays an important role in the consistency results. We note that although other splitting criteria exist, our theoretical framework can be extended to their particular settings. Without making restrictive distributional assumptions on the underlying model, our results show that these popular splitting rules, not surprisingly, are affected by the underlying censoring distribution. Hence the splitting variables are essentially biased, in the sense that the most important variable may not be used asymptotically. Our contribution to the existing literature is that we exactly quantify this biased estimator by developing a concentration bound around its true mean. Noticing that the KM and the NA estimators are the two most commonly used estimators, the following Lemma bounds their difference through an exact inequality regardless of the underlying data distribution. The proof follows mostly from \cite{cuzick1985asymptotic} and is given in the Appendices.

\begin{lemma}\label{lem:KM.NA.diff}
Let $\widehat S_{\scriptscriptstyle K\!M}(t)$ and $\widehat S_{\scriptscriptstyle N\!A}(t)$ be the Kaplan-Meier and the Nelson-Altshuler estimators, respectively, obtained using the same set of samples $\{Y_i, \delta_i\}_{i=1}^n$. Then we have,
\bal
\lvert \widehat S_{\scriptscriptstyle K\!M}(t) - \widehat S_{\scriptscriptstyle N\!A}(t) \rvert < \widehat S_{\scriptscriptstyle K\!M}(t) \frac{4}{\sum_{i=1}^n \mOne(Y_i \geq t)}, \nonumber
\eal
for any observed failure time point $t$ such that $\widehat S_{\scriptscriptstyle K\!M}(t)> 0$.
\end{lemma}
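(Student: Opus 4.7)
The plan is to analyze the ratio $\widehat S_{\scriptscriptstyle N\!A}(t)/\widehat S_{\scriptscriptstyle K\!M}(t)$ directly, using the product form of both estimators. Writing $\widehat S_{\scriptscriptstyle K\!M}(t) = \prod_{t_i \leq t}(1-d_i/n_i)$ and $\widehat S_{\scriptscriptstyle N\!A}(t) = \prod_{t_i \leq t}\exp(-d_i/n_i)$ over the distinct observed failure times $t_i$ with $d_i$ deaths out of $n_i$ at risk, the factor-by-factor inequality $e^{-x}\geq 1-x$ immediately gives $\widehat S_{\scriptscriptstyle N\!A}(t) \geq \widehat S_{\scriptscriptstyle K\!M}(t)$. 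This removes the absolute value and reduces the claim to showing $\widehat S_{\scriptscriptstyle N\!A}(t)/\widehat S_{\scriptscriptstyle K\!M}(t) - 1 < 4/n(t)$, where $n(t) = \sum_{i=1}^n \mOne(Y_i \geq t)$.

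I would next pass to logarithms. Let $R = \log[\widehat S_{\scriptscriptstyle N\!A}(t)/\widehat S_{\scriptscriptstyle K\!M}(t)] = \sum_{t_i \leq t}[-d_i/n_i - \log(1-d_i/n_i)]$. Each summand is nonnegative, and the Taylor expansion $-\log(1-x)-x = \sum_{k \geq 2} x^k/k$ admits the upper bound $x^2/(1-x)$ on $[0,1)$. Applying this termwise yields $R \leq \sum_{t_i \leq t} d_i^2/[n_i(n_i - d_i)]$. I would then convert this into a telescoping sum using the partial-fraction identity $d_i/[n_i(n_i - d_i)] = 1/(n_i - d_i) - 1/n_i$ together with the monotonicity of the at-risk process, which guarantees that $n_i - d_i$ is at least the at-risk count at the next failure time. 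The partial sums collapse and are controlled by a constant multiple of $1/n(t)$, after which a simple linearization $e^R - 1 \leq (\text{constant}) \cdot R$ extracts the factor $4/n(t)$.

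The main obstacle I anticipate is the last piece of bookkeeping: both the termwise Taylor bound and the linearization $e^R - 1 \approx R$ degrade whenever some $d_i/n_i$ is close to one, i.e., when the at-risk set shrinks drastically at a single failure time. The resolution, which is the technical heart of the Cuzick (1985) argument, is to exploit the built-in cancellation between the small factor $1 - d_i/n_i$ appearing in $\widehat S_{\scriptscriptstyle K\!M}(t)$ itself and the reciprocal $1/(1 - d_i/n_i)$ produced by the Taylor bound on $R$. After this cancellation the dominating contribution reduces to an expression involving only $1/n(t)$, and the universal constant $4$ can be extracted from the resulting telescoping. Keeping track of this cancellation carefully—so that the inequality is strict and holds non-asymptotically for every failure time $t$ with $\widehat S_{\scriptscriptstyle K\!M}(t) > 0$—is the step I expect will require the most care.
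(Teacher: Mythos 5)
Your main line of argument---the factorwise inequality $e^{-x}\ge 1-x$ giving $\widehat S_{\scriptscriptstyle N\!A}(t)\ge \widehat S_{\scriptscriptstyle K\!M}(t)$, a second-order Taylor bound on the log-ratio $R$, a telescoping bound $R=O(1/n(t))$ from the strictly decreasing at-risk counts, and finally $e^{R}-1\lesssim R$---is sound in the untied case and is essentially the paper's proof transcribed to the log scale: the paper instead sandwiches $\widehat S_{\scriptscriptstyle N\!A}$ between $\widehat S_{\scriptscriptstyle K\!M}$ and $\prod_i n_i/(n_i+1)$ and expands the ratio of the two products, but both arguments rest on the same termwise Taylor comparison and on $\sum_i n_i^{-2}$ being of order $1/n(t)$ when the $n_i$ are distinct integers at least $n(t)$. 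With no ties your constants do close: $-\log(1-x)-x\le x^{2}/[2(1-x)]$ gives $R\le \tfrac12\sum_i \{1/(n_i-1)-1/n_i\}\le 1/[2(n(t)-1)]\le 1/n(t)$ because $\widehat S_{\scriptscriptstyle K\!M}(t)>0$ forces every $n_i\ge 2$, and then $e^{R}-1\le 2(e^{1/2}-1)R<1.3/n(t)<4/n(t)$, with strictness inherited from $e^{-x}>1-x$.

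The genuine gap is in your final paragraph, where you set the argument up for general $d_i$ and claim the regime $d_i/n_i\approx 1$ can be rescued by a cancellation between the small factor $1-d_i/n_i$ inside $\widehat S_{\scriptscriptstyle K\!M}(t)$ and the reciprocal $1/(1-d_i/n_i)$ in the bound on $R$. No such rescue exists: dividing both sides of the lemma by $\widehat S_{\scriptscriptstyle K\!M}(t)$ shows the claim is exactly $e^{R}-1<4/n(t)$, in which the Kaplan--Meier factor no longer appears, and with ties the statement is simply false. For instance, if at a single failure time $n-1$ of the $n$ at-risk subjects fail, then $\widehat S_{\scriptscriptstyle K\!M}=1/n>0$ and $\widehat S_{\scriptscriptstyle N\!A}=e^{-(n-1)/n}$, so the difference is approximately $e^{-1}$ while the claimed bound is $4/n^{2}$. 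The paper avoids this by proving the lemma only for untied failure times ($d_i\equiv 1$), in which case $\widehat S_{\scriptscriptstyle K\!M}(t)>0$ forces $n_i\ge 2$, hence $d_i/n_i\le 1/2$, and the obstacle you anticipate never arises; also note that with $d_i>1$ your weighted sum $\sum_i d_i\{1/(n_i-d_i)-1/n_i\}$ no longer telescopes to $O(1/n(t))$. So replace the ``cancellation'' step by the explicit no-ties restriction (or an assumption bounding $d_i/n_i$ away from $1$); with that restriction your outline is complete.
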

The above result suggests that calculating the difference between two KM curves is asymptotically the same as using the NA estimator as long as we only calculate the curve up to a time point where the sample size is sufficiently large. For this purpose, Assumption \ref{asm:tau} is always used throughout this paper. Note that similar assumptions are commonly used in the survival analysis literature, for examples, $\pr(T \geq \tau) > 0$ in \cite{fleming2011counting}, and $\pr(C \geq \tau) > 0$ in \cite{murphy1997maximum}. Then, with large probability, $\widehat S_{\scriptscriptstyle N\!A}(t) = \widehat S_{\scriptscriptstyle K\!M}(t) + O(1/n)$ across all terminal nodes.

\begin{assumption}\label{asm:tau}
There exists fixed positive constants $\tau < \infty$ and $M \in (0, 1)$, such that
\bal
\pr(Y \geq \tau \mid X=x) \geq M, \nn
\eal
for all $x \in \cX$.
\end{assumption}

\subsection{A motivating example}\label{sec:toy}

Noticing that the splitting rule selection process is essentially comparing the survival curves computed from two potential child nodes, we take a closer look at this process. Most existing analyses of the KM estimator assume that the observations are i.i.d. \citep{breslow1974large, gill1980censoring} or at least one set of the failure times or censoring times are i.i.d. \cite{zhou1991some}. However, this is almost always not true for tree-based methods at any internal node because both $T_i$'s and $C_i$'s typically depend on the covariates. The question is whether this affects the selection of the splitting variable. We first use a simulation study to demonstrate this issue.

Consider the split at a particular node. We generate three random variables: $X^{(1)}$, $X^{(2)}$ and $X^{(3)}$ from a multivariate normal distribution with mean 0 and covariance matrix $\Sigma$, where all diagonal elements of $\Sigma$ are 1, and the nonzero off-diagonal elements are $\Sigma_{12} = \Sigma_{21} = 0.8$. The failure distribution is exponential with mean $\exp(-1.25 X^{(1)} - X^{(3)} + 2)$. We consider two censoring distributions for $C$: an exponential distribution with mean 2 for all subjects, i.e., independent of $T$; and the second one is an exponential distribution with mean equal to $\exp(-3 X^{(2)})$. The splitting rule is searched for by maximizing the log-rank test statistic between the two potential child nodes $\{X^{(j)} \leq c, X \in \Node\}$ and $\{X^{(j)} > c, X \in \Node\}$, and the cutting point $c$ is searched on the range of the variable. In an ideal situation, one would expect the best splitting rule to be constructed using $X^{(1)}$ with large probability, since it carries the most signal. This is indeed the case as shown in the first row of Table \ref{tab:prob} for the i.i.d. censoring case, but not so much for the dependent censoring case. The simulation is done with $n = 1000$ and repeated 1000 times. While this only demonstrates the splitting process on a single node, the consequence of this on the consistency of the entire tree is much more involved since the entire tree structure can be altered by the censoring distribution. It is difficult to draw a definite conclusion at this point, but the impact of the censoring distribution is clearly demonstrated.

\begin{table}[htp]\caption{Probability of selecting the splitting variable.}\label{tab:prob}
\begin{center}
    \begin{tabular}{|l|c|c|c|}
    \hline
    \noalign{\smallskip}
       Censoring distribution & $X^{(1)}$ & $X^{(2)}$ & $X^{(3)}$ \\
        \noalign{\smallskip}
        \hline
        \noalign{\smallskip}
    $G_i$ identical & 0.978 & 0.001 & 0.021 \\
    $G_i$ depends on $X_i^{(2)}$ & 0.281 & 0.037 & 0.682 \\
    \noalign{\smallskip}
    \hline
    \end{tabular}
\end{center}
\end{table}

\subsection{Survival estimation based on independent, non-identically distributed observations\label{sec:withinbias}}
It now seems impossible to analyze the consistency without exactly quantifying the within node estimation performance. We look at two different quantities corresponding to the two scenarios used above. The first one is an averaged CHF within any node $\Node$:
\bal\label{eqn:nodemeanLambda}
\Lambda_\Node(t) = \frac{1}{\mu(\Node)}\int_{x \in \Node} \Lambda(t \mid x) \textup{d} \textup{P}(x),
\eal
where $\textup{P}$ is the distribution of $X$, and $\mu(\Node)= \int_{x \in \Node} \textup{d} \textup{P}(x) $ is the measure of node $\Node$. Clearly, since in the first case, the censoring distribution is not covariate dependent, we are asymptotically comparing $\Lambda_\Node(t)$ on the two child nodes, which results in the selection of the first variable. This should also be considered as a rational choice since $X^{(1)}$ contains more signal at the current node.

In the second scenario, i.e., the dependent censoring case, the within-node estimator $\hLambda_{\Node,n}(t)$ does not converge to the $\Lambda_\Node(t)$ in general, which can be inferred from the following theorem. As the main result of this section, Theorem \ref{thm:survbound} is interesting in its own right for understanding tree-based survival models, since it establishes a bound of the survival estimation under independent but non-identically distributed samples, which is a more general result than \cite{zhou1991some}. It quantifies exactly the estimation performance for each potential child node, hence is also crucial for understanding splitting rules in general.

\begin{theorem}\label{thm:survbound}
Let $\hLambda_n(t)$ be the Nelson-Aalen estimator of the CHF from a set of $n$ independent samples $\{Y_i, \delta_i\}_{i=1}^n$ subject to right censoring, where the failure and censoring distributions (not necessarily identical) are given by $F_i$'s and $G_i$'s. Under Assumption \ref{asm:tau}, we have for $\epsilon_1 \leq 2$ and $n > 4 / (\epsilon_1^2 M^4)$,
\bal
\pr\bigg( \underset{t < \tau}{\sup} \left\lvert \hLambda_n(t) - \Lambda_n^\ast(t) \right\rvert > \epsilon_1 \bigg) < 16(n+2) \exp\Big\{\frac{-n M^4 \epsilon_1^2}{1152}\Big\},
\eal
where
\bal
\Lambda_n^\ast(t) = \int_0^t \frac{\sum[1-G_i(s)] \textup{d} F_i(s)}{\sum[1-G_i(s)][1 - F_i(s)]}. \label{Lambdastar}
\eal
\end{theorem}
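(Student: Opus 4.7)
The plan is to decompose $\hLambda_n-\Lambda_n^\ast$ via the Doob--Meyer compensator of the pooled counting process, and then to separately control the resulting martingale and residual pieces using Hoeffding-type concentration tailored to independent but non-identically distributed samples, with Assumption~\ref{asm:tau} guaranteeing that division by the at-risk denominator is stable. Writing $\bar{N}_n(s)=\sum_i\mOne(Y_i\le s,\delta_i=1)$ and $\bar{Y}_n(s)=\sum_i\mOne(Y_i\ge s)$ so that $\hLambda_n(t)=\int_0^t \d\bar{N}_n(s)/\bar{Y}_n(s)$, the non-informative censoring assumption implies that $\bar{M}_n(t)=\bar{N}_n(t)-\int_0^t \sum_i Y_i(s)\lambda_i(s)\,\d s$ is a martingale under the pooled filtration. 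Letting $\alpha_i(s)=(1-F_i(s))(1-G_i(s))$ and observing that $\Lambda_n^\ast(t)=\int_0^t [\sum_i\alpha_i\lambda_i]/[\sum_i\alpha_i]\,\d s$, I would work with the decomposition
\[
\hLambda_n(t)-\Lambda_n^\ast(t)=\int_0^t\frac{\d\bar{M}_n(s)}{\bar{Y}_n(s)}+\int_0^t\left(\frac{\sum_i Y_i(s)\lambda_i(s)}{\bar{Y}_n(s)}-\frac{\sum_i\alpha_i(s)\lambda_i(s)}{\sum_i\alpha_i(s)}\right)\d s.
\]

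Next I would establish a uniform lower bound $\bar{Y}_n(s)\ge nM/2$ for all $s\le\tau$. Since Assumption~\ref{asm:tau} gives $\mathbb{E}[\mOne(Y_i\ge\tau)]\ge M$ for every $i$, Hoeffding's inequality applied to these independent Bernoullis bounds the failure probability by $2\exp(-nM^2/2)$, and monotonicity of $\bar{Y}_n$ propagates the bound to all $s\le\tau$; the hypothesis $n>4/(\epsilon_1^2 M^4)$ ensures this dominant event binds. On the dominant event, the martingale integral in the decomposition has jumps bounded by $2/(nM)$ and predictable variation of order $1/n$, so a Bernstein-type martingale inequality yields a tail of order $\exp(-cnM^4\epsilon_1^2)$ for the supremum over $t<\tau$, with an explicit constant $c$ obtained by tracking the lower bound on the denominator.

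For the non-martingale residual I would add and subtract $\sum_i\alpha_i\lambda_i/\bar{Y}_n$, splitting the integrand into a numerator-fluctuation piece $\bar{Y}_n^{-1}\sum_i\lambda_i[Y_i-\alpha_i]$ and a denominator-fluctuation piece proportional to $\bar{Y}_n^{-1}[\sum_i\alpha_i-\bar{Y}_n]$; each is a ratio of an independent bounded sum to the lower-bounded $\bar{Y}_n$, so Hoeffding again delivers the required exponential tails. Because $\bar{N}_n$ and $\bar{Y}_n$ are step functions with at most $n$ jumps in $[0,\tau]$, the supremum over $t<\tau$ of each piece is attained on this finite grid, so a union bound over $O(n)$ points combined with the pointwise exponential tails yields the prefactor $16(n+2)$. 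The main obstacle will be this residual piece, whose integrand contains the possibly unbounded hazards $\lambda_i(s)$: I anticipate having to re-express those integrals via bounded objects such as $F_i(\tau),G_i(\tau)\le 1$ using the identity $\d F_i=(1-F_i)\lambda_i\,\d s$ together with the Nelson--Altshuler identity of Lemma~\ref{lem:KM.NA.diff}, and to carefully track constants so that $M$ appears only through $M^4$ in the exponent and so that the numerical factor $1/1152$ emerges from combining the Hoeffding, martingale-Bernstein, and union-bound constants.
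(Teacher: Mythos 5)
Your martingale route (Doob--Meyer decomposition, Freedman/Bernstein for $\int_0^t \d\bar M_n/\bar Y_n$, Hoeffding for the at-risk lower bound $\bar Y_n\ge nM/2$) is genuinely different from the paper's argument and the martingale half is plausible, but the compensator-comparison term is where your plan breaks, and you have not closed that gap. The integrand $\bar Y_n^{-1}\sum_i\lambda_i(s)\{\mOne(Y_i\ge s)-\alpha_i(s)\}$ is \emph{not} "an independent bounded sum": the summands have range $\lambda_i(s)$, which Assumption \ref{asm:tau} does not bound, so Hoeffding does not apply pointwise; you acknowledge this, but the proposed fix is not a fix --- Lemma \ref{lem:KM.NA.diff} compares the Kaplan--Meier and Nelson--Altshuler \emph{estimators} and says nothing about hazards or population quantities. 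Moreover, the claim that the suprema are attained on the $O(n)$ jump points and can be handled by a union bound fails twice: between jumps the integrand still moves through the deterministic $\lambda_i(s),\alpha_i(s)$, so $t\mapsto\int_0^t(\cdot)\,\d s$ is not a step function; and the jump points are data-dependent, so "pointwise Hoeffding at the grid points" is not a legitimate application of Hoeffding. What uniformity over a continuum of times actually requires is an empirical-process argument for monotone, bounded processes (symmetrization/DKW--Pollard type), and that is precisely where the paper's prefactor $16(n+2)$ comes from, not from a union bound over jumps.

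For contrast, the paper never touches hazards or martingales: it identifies $\Lambda_n^\ast(t)=\int_0^t \d E[N(s)]/E[K(s)]$ and splits $\hLambda_n-\Lambda_n^\ast$ into $\int_0^t(1/K-1/E[K])\,\d N$ plus $\int_0^t \d\{N-E[N]\}/E[K]$, then reduces everything to uniform concentration of the monotone step processes $N$ and $K$ around their means (Lemma \ref{lemma:A1}, following Pollard), a Hoeffding lower bound on $K(\tau)$, and an integration-by-parts step (Lemma \ref{lemma:A3}) that uses only $E[K]\ge nM$ and $\sup_t|N-E[N]|$; all summands are indicators, so boundedness is automatic. If you want to keep your decomposition, the repair is essentially to import the same tools: rewrite the compensator difference so that only bounded monotone processes appear (e.g., $\sum_i\Lambda_i(Y_i\wedge t)$, bounded by $n\log(1/M)$ under Assumption \ref{asm:tau}, or directly $K-E[K]$), prove uniform concentration for those by a Pollard-type argument, and use integration by parts against the deterministic denominator --- at which point your proof has largely converged to the paper's Lemmas \ref{lemma:A1}--\ref{lemma:A3}, with the martingale step an optional substitute for Lemma \ref{lemma:A3}.
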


The proof is deferred to Appendix \ref{sec:lamdastar}. Based on Theorem \ref{thm:survbound}, if we restrict ourselves to any node $\Node$, the difference between the within-node estimator $\hLambda_{\Node, n}(t)$ and
\bal
\Lambda_{\Node,n}^\ast(t) = \int_0^t \frac{\sum_{X_i \in \Node}[1-G_i(s)] \textup{d} F_i(s)}{\sum_{X_i \in \Node}[1-G_i(s)][1 - F_i(s)]} \label{nodemeanLambdastar}
\eal
is bounded above, where $\Lambda_{\Node,n}^\ast(t)$ is some version of the underlying true cumulative hazard contaminated by the censoring distribution. Noting that $\Lambda_{\Node,n}^\ast(t)$ also depends on the sampling points $X_i$'s, we further develop Lemma \ref{lemma:sample/expectation0} in the Appendix to verify that $\Lambda^\ast_{\Node, n}(t)$ and its expected version $\Lambda^\ast_\Node(t)$ are close enough, where
\begin{align}
\Lambda_\Node^\ast(t)=\int_0^t \frac{E_{X\in\Node} [1-G(s\mid X)] \textup{d} F(s\mid X)}{ E_{X\in\Node} [1-G(s\mid X)][1 - F(s\mid X)]}.
\label{nodetrueLambdastar}
\end{align}
It is easy to see that the difference between $\Lambda_{\Node,n}^\ast(t)$ and $\Lambda_\Node(t)$ will vanish if the $F_i$'s are identical within a node $\Node$ (a sufficient condition). Note that this is what we are hoping for eventually at a terminal node.
\bal
\Lambda_{\Node, n}^\ast(t) =& \int_0^t \frac{\sum_{X_i \in \Node}[1-G_i(s)]}{\sum_{X_i \in \Node}[1-G_i(s)]} \frac{\textup{d} F(s)}{1 - F(s)} \nc
& \pushright{(\text{if $F_i \equiv F$ for all $X_i \in \Node$})} \nc
=&~ \int_0^t \frac{\textup{d} F(s)}{1 - F(s)} = \frac{1}{\mu(\Node)}\int_{x \in \Node} \int_0^t \frac{\textup{d} F(s)}{1 - F(s)} \textup{d} \textup{P}(x) = \Lambda_\Node(t).
\eal

As we demonstrated in the simulation study above, comparing $\hLambda_{\Node,n}(t)$ between two child nodes may lead to a systematically different selection of splitting variables than using $\Lambda_\Node(t)$ which is not known a priori. The main cause of the differences between these two quantities is that the NA estimator treats each node as a homogeneous group, which is typically not true. Another simple interpretation is that although the conditional independence assumption $T \perp C \mid X$ is satisfied, we have instead $T \not\perp C \,\mid \,\mOne(X^{(j)} < x)$ at an internal node. This tangling between the censoring and failure distributions makes it a very changeling problem because, at each internal node, we may only select one variable to split.

\section{Consistency of survival tree and forest}\label{sec:consistency}

It becomes apparent now that this bias plays an important role in the asymptotic properties of survival tree and forest models. However, an important question we may ask is, whether this affects the consistency of existing methodologies? To answer this question, we provide several analyses of consistency in different settings. Noticing that difficulty arises when the splitting rule is highly data dependent, we first investigate the consistency under random splitting rules and finite $d$ (Section \ref{sec:consistency1}) to help our understanding. An analog of this result for regression and classification settings was proposed by \cite{breiman2004consistency}, and further analyzed by \cite{lin2006random, biau2008consistency, biau2012analysis, arlot2014analysis} and many others. However, it is significantly more difficult when the splitting rule is data dependent, especially when the number of dimensions $d$ is diverging with $n$. Note that $\hLambda_n(t)$ is a biased estimation of the within node averaged CHF, any marginal comparison splitting rule may falsely select a variable involved in the censoring mechanism. Furthermore, any dependencies in the high-dimensional setting may lead us to further falsely select a noise variable because they will marginally carry signals. Hence, we investigate a high-dimensional setting where the noise variables have weak dependencies with the important variables (to be defined later). Under suitable conditions and some necessary modifications to the tree-build process, we show that a survival tree or forest model can still achieve consistency, however, with its rate possibly affected by the censoring distribution. To establish these results, we use the variance-bias breakdown and start by analyzing the variance component of a survival tree estimator.

\subsection{Adaptive concentration bounds of survival trees and forests}\label{sec:adaptive}

We focus on quantifying the survival forest models from a new angle, namely the adaptive concentration \citep{wager2015adaptive} of each terminal node estimator to the true within-node expectation. In the sense of the variance-bias breakdown, the goal of this section is to quantify a version of the variance component of a tree-based model estimator. To be precise, with large probability, our main results bound $\big\lvert \hLambda_{\Node,n}(t) - \Lambda_{\Node, n}^\ast(t) \big\rvert$ across all possible terminal nodes in a fitted tree or forest. The adaptiveness comes from the fact that the target of the concentration is the censoring contaminated version $\Lambda_{\Node, n}^\ast(t)$, which is adaptively defined for each node $\Node$ with the observed samples, rather than as a fixed universal value. The results in this section have many implications. Since this bound is essentially the variance part of the estimator, we can then focus the bias part to show consistencies. Although this may still pose challenges in specific situations, our later examples of consistency provide a framework that is largely applicable to most existing methods.

We start with some additional definitions and notations. Following our previous assumptions on the underlying data generating model, we observe a set of $n$ i.i.d. samples $\cD_n$. We view each tree as a partition of the feature space, denoted $\ccA = \{\Node_u\}_{u \in \cU}$, where the $\Node_u$'s are non-overlapping hyper-rectangular terminal nodes. We first define valid survival tree and forest estimators of the CHF. Roughly speaking, with certain constraints, these are all the possible survival tree or forest estimators resulted from a set of observed data. First, borrowing Definitions in \cite{wager2015adaptive}, we denote the set of all $\{\alpha,k\}$-valid tree partitions by $\cV_{\alpha,k}(\cD)$. In addition, we define the collection $\{\ccA^{(b)}\}_{b=1}^B$ as a valid forest partition if each of its tree partitions are valid. We denote the set of all such valid forest partitions as $\cH_{\alpha,k}(\cD)$. A valid survival tree or forest estimator is induced from the corresponding valid set.

\begin{definition}[Valid survival tree and forest]
Given the observed data $\cD_n$, a valid survival tree estimator of the CHF is induced by a valid partition $\ccA \in \cV_{\alpha,k}(\cD_n)$ with $\ccA = \{\Node_u\}_{u \in \cU}$:
\bal
\hLambda_{\ccA,n}(t \mid x) = \sum_{u \in \cU} \mOne(x \in \Node_u) \hLambda_{\Node_u,n}(t),
\eal
where each $\hLambda_{\Node_u,n}(t \mid x)$ is defined by Equation \eqref{eqn:NA}. Furthermore, a valid survival forest $\hLambda_{\{\ccA_{(b)}\}_1^B,n}$ is defined as the average of $B$ valid survival trees induced by a collection of valid partitions $\{\ccA_{(b)}\}_1^B \in \cH_{\alpha,k}(\cD_n)$,
\bal
\hLambda_{\{\ccA_{(b)}\}_1^B,n}(t \mid x) = \frac{1}{B}\sum_{b=1}^B \hLambda_{\ccA_{(b)},n}(t\mid x).
\eal
\end{definition}

We also define the censoring contaminated survival tree and forest, which are asymptotic versions of the corresponding within-node average estimators of the CHF. Note that by Theorem \ref{thm:survbound}, these averages are censoring contaminated versions $\Lambda^\ast_{\Node, n}(t)$, but not the true averages $\Lambda_\Node(t)$.

\begin{definition}[Censoring contaminated survival tree and forest] Given the observed data $\cD_n$ and $\ccA \in \cV_{\alpha,k}(\cD_n)$, the corresponding censoring contaminated survival tree is defined as
\bal
\Lambda_{\ccA, n}^\ast(t \mid x) = \sum_{u \in \cU} \mOne(x \in \Node_u) \Lambda_{\Node_u, n}^\ast (t),
\eal
where each $\Lambda_{\Node_u, n}^\ast (t)$ is defined by Equation \eqref{nodemeanLambdastar}. Furthermore, let $\{\ccA_{(b)}\}_1^B \in \cH_{\alpha,k}(\cD_n)$. Then the censoring contaminated survival forest is given by
\bal
\Lambda_{\{\ccA_{(b)}\}_1^B, \,n}^\ast(t \mid x) = \frac{1}{B}\sum_{b=1}^B \Lambda_{\ccA_{(b)}, \,n}^\ast (t\mid x).
\eal
\end{definition}

Our adaptive concentration bound result considers the quantity
\begin{align*}
\hLambda_{\ccA,n}(t\mid x)- \Lambda_{\ccA, n}^\ast(t\mid x)
\end{align*}
for all valid partitions $\ccA \in \cV_{\alpha,k}(\cD_n)$. We first specify several regularity assumptions. The first assumption is a bound on the dependence of the individual features. Note that in the literature, uniform distributions are often assumed \cite{biau2008consistency, biau2012analysis} on the covariates, which implies independence. To allow dependency among covariates, we assume the following two assumptions, which have also been considered in \cite{wager2015adaptive}. Without loss of generality, we assume that covariates are distributed on $[0,1]^d$.
\begin{assumption}
 Covariates $X\in [0,1]^d$ are distributed according to a density function $p(\cdot)$ satisfying $1/\zeta \leq p(x) \leq \zeta$ for all $x$ and some $\zeta\geq 1$.
\label{asm:density}
\end{assumption}
We also set a restriction on the tuning parameter $k$, the minimum terminal node size, which may grow with $n$ and dimension $d$ via the following rate:
\begin{assumption}
Assume that $k$ is bounded below so that
\begin{equation}
\lim_{n \rightarrow \infty} \frac{\log(n) \max\{\log(d),\log\log(n)\}}{k}=0.
\end{equation}\label{asm:k}
\end{assumption}

Then we have the adaptive bound for our tree estimator in the following theorem. The proof is collected in Appendix \ref{proof:thm:bound}.

\begin{theorem}\label{thm:bound}
Suppose the training data $\cD_n$ satisfy Assumptions \ref{asm:tau} and \ref{asm:density}, and the rate of the sequence $(n,d,k)$ satisfies Assumption \ref{asm:k}. Then all valid trees concentrate on a censoring contaminated tree:
\begin{align*}
\sup_{t<\tau,\,x\in [0,1]^d,\,\ccA \in \cV_{\alpha,k}(\cD_n) } & \left\lvert \hLambda_{\ccA,n}(t\mid x)-\Lambda_{\ccA, n}^\ast(t\mid x) \right\rvert \\
 \leq&~ M_1\sqrt {  \frac{\log(n/k)[\log(dk)+\log\log(n)]}{k\log((1-\alpha)^{-1})} },
\end{align*}
with probability larger than $1-2/\sqrt n$, for some universal constant $M_1$.
\end{theorem}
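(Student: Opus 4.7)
The strategy is to combine the single-node concentration bound from Theorem \ref{thm:survbound} with a uniform union bound over all hyperrectangles that could appear as a terminal node of some $\ccA \in \cV_{\alpha,k}(\cD_n)$. For any fixed $x \in [0,1]^d$, the difference $\hLambda_{\ccA,n}(t \mid x) - \Lambda^\ast_{\ccA,n}(t \mid x)$ reduces to $\hLambda_{\Node,n}(t) - \Lambda^\ast_{\Node,n}(t)$ at the unique terminal node $\Node \ni x$, so it suffices to control the within-node deviation uniformly over all reachable nodes and over $t < \tau$.

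The first step is a conditional per-node tail bound. Conditioning on the covariates $X_1,\dots,X_n$ and fixing a hyperrectangle $\Node$ with $n_\Node = \sum_i \mOne(X_i \in \Node) \geq k$, the within-node observations $\{(Y_i,\delta_i): X_i \in \Node\}$ are independent but non-identically distributed, placing us exactly in the setup of Theorem \ref{thm:survbound} at sample size $n_\Node$. Using $n_\Node \geq k$ in the exponent and $n_\Node \leq n$ in the prefactor, and noting that the at-risk assumption (Assumption \ref{asm:tau}) holds uniformly in $x$, we obtain for $\epsilon_1 \leq 2$ and $k > 4/(\epsilon_1^2 M^4)$,
\begin{equation*}
\pr\Big(\sup_{t<\tau}\big|\hLambda_{\Node,n}(t)-\Lambda^\ast_{\Node,n}(t)\big| > \epsilon_1 \,\Big|\, X_1,\dots,X_n\Big) < 16(n+2)\exp\Big(-\tfrac{k M^4 \epsilon_1^2}{1152}\Big).
\end{equation*}

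The crux, and the source of the specific rate in the theorem, is to bound the cardinality of the collection of terminal nodes that can appear in some $\ccA \in \cV_{\alpha,k}(\cD_n)$. Two properties of validity make the counting tractable. First, the $\alpha$-balance constraint forces each root-to-leaf path to have length at most $D_{\max} = \lceil \log(n/k)/\log((1-\alpha)^{-1}) \rceil$, since each split shrinks the node's mass by at least a factor $1-\alpha$ and recursion halts once a node reaches size $k$. Second, among admissible splits at a candidate internal node, the number of combinatorially distinct partitions is at most $d$ times a multiple of $k$: the coordinate is chosen among $d$, and cut values between adjacent order statistics within the parent yield at most $O(k)$ distinct partitions per coordinate after a suitable coarsening (enabled by Assumption \ref{asm:density}, which rules out pathological sample configurations). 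Composing these observations across the $D_{\max}$ levels produces a cardinality bound whose logarithm is, up to constants, $\log(n/k)[\log(dk)+\log\log(n)]/\log((1-\alpha)^{-1})$; the extra $\log\log(n)$ absorbs the polynomial prefactor $16(n+2)$ and a discretising mesh on $t \in [0,\tau)$. This combinatorial step, which parallels the adaptive counting argument of \cite{wager2015adaptive} in the regression setting, is the main technical obstacle.

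The final step is to set $\epsilon_1^2 = C\,\log(n/k)[\log(dk)+\log\log(n)] / (k \log((1-\alpha)^{-1}))$ for a sufficiently large universal constant $C$, apply the union bound of the per-node tail inequality across all candidate terminal nodes and the time mesh, and integrate out the conditioning on the $X_i$. Assumption \ref{asm:k} is calibrated exactly so that the combinatorial factor and polynomial prefactor are dominated by $\exp(-k M^4 \epsilon_1^2/1152)$ and so that $\epsilon_1 \leq 2$ and the lower bound on $k$ demanded by Theorem \ref{thm:survbound} hold for large $n$; the residual failure probability then falls below $2/\sqrt n$, yielding the claimed bound with $M_1$ depending only on $M$ and $C$.
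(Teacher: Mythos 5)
Your high-level skeleton (per-node concentration from Theorem \ref{thm:survbound} plus a union bound over a controlled family of rectangles, with the rate coming from the log-cardinality of that family) is the right one, but the counting step as you state it is where the argument breaks, and the repair requires machinery you have omitted. You claim that at each internal node the number of combinatorially distinct admissible splits is $O(dk)$ ``after a suitable coarsening.'' Without coarsening this is false: a node near the root contains $\Theta(n)$ samples, so there are $\Theta(dn)$ data-determined splits, and the resulting log-cardinality $D_{\max}\log(dn)$ is strictly worse than the theorem's $D_{\max}[\log(dk)+\log\log n]$ whenever $k$ is small relative to $n$ (e.g.\ $d$ fixed and $k=\log^2 n$ gives $\log n$ versus $\log\log n$), so the naive union bound cannot deliver the stated rate. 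The coarsening that does deliver it is exactly the parsimonious approximation set $\mathcal{R}$ of Wager and Walther, which is what the paper uses: every valid node $\Node$ is matched to an inner approximant $\Noder\in\mathcal{R}$ with $\Noder\subseteq\Node$ and $\#\Node-\#\Noder=o(k)$, the union bound and Theorem \ref{thm:survbound} are applied only over $\mathcal{R}$, and $\log|\mathcal{R}|$ has the claimed form.

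Once you coarsen, however, you incur two approximation-error terms that your proposal never addresses, and in the survival setting they are the bulk of the work. The paper's proof is a three-term triangle inequality: the middle term $\sup_{\Noder\in\mathcal{R}}|\hLambda_{\Noder,n}-\Lambda^\ast_{\Noder,n}|$ is your union-bound step, but one must also control $|\hLambda_{\Node,n}-\hLambda_{\Noder,n}|$ and $|\Lambda^\ast_{\Node,n}-\Lambda^\ast_{\Noder,n}|$ uniformly over all valid $\Node$. Because the Nelson--Aalen estimator is a sum of ratios rather than a sample average, perturbing the node by $o(k)$ observations does not trivially perturb the estimator by $o(1)$: one needs a uniform lower bound on the at-risk counts $\sum_i\mOne(Y_i\geq s)$ over all $\Noder\in\mathcal{R}$ (the paper's Lemma \ref{lemma:tau}, itself a Chernoff plus union bound argument relying on Assumption \ref{asm:tau}) before the added and removed terms can be bounded by $O(1/\sqrt{k})$; an analogous computation with the $M^4$ denominator handles the $\Lambda^\ast$ discrepancy. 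Your proposal is therefore not wrong in spirit, but as written it asserts the conclusion of the hardest two steps (the cardinality reduction and the stability of the NA estimator under node approximation) without proving either.
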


The above theorem holds for all single tree partitions in $\cV_{\alpha,k}(\cD_n)$. Consequently, we have a similar result for the forest estimator in Corollary~\ref{thm:boundforest} in Appendix \ref{proof:thm:bound}.

\begin{remark}
In a moderately high dimensional setting, i.e. $d\sim n$, the rate is $\log (n)/k^{1/2}$. In an ultra high dimensional setting, for example, $\log(d) \sim n^{\vartheta}$, where $0<\vartheta<1$, the rate is close to $n^{\vartheta}/k^{1/2}$. The rate that $k$ grows with $n$ cannot be too slow in order to achieve the bound in the ultra high dimensional setting. This is somewhat intuitive since if $k$ grows too slowly then we are not able to bound all possible nodes in a single tree.
\end{remark}

The results established in this section essentially address the variance component in a fitted random forest. We chose not to use the true within-node population averaged quantity $\Lambda_\Node^\ast(t)$ (see Equation \ref{nodetrueLambdastar}), or its single tree and forest versions as the target of the concentration. This is because such a result would require bounded density function of the failure time $T$. However, when $f(t)$ is bounded, the results can be easily generalized to $\big\lvert \hLambda_{\Node,n}(t) - \Lambda_{\Node}^\ast(t) \big\rvert$. Lemma \ref{lemma:sampleexpectation} in Section \ref{sec:consistency2} provides an analog of Theorem \ref{thm:survbound} in this situation.

With the above concentration inequalities established, we are now in a position to discuss consistency results. We consider two specific scenarios: a finite dimensional case where the splitting rule is generated randomly, and an infinite dimensional case using the marginal difference of Nelson-Aalen estimators as the splitting rule.

\subsection{Consistency under random splitting rules when dimension $d$ is finite}\label{sec:consistency1}

Assume that the dimension $d$ of the covariates space is fixed and finite. At each internal node we choose the splitting variable randomly and uniformly from all covariates \cite{biau2012analysis, klusowski2018complete}. When the splitting variable is chosen, we choose the splitting point uniformly at random such that both child nodes contain at least $\alpha$ proportion of the samples in the parent node. We will bound the bias term
\begin{align*}
\sup_{t<\tau} E_X \left\lvert\Lambda_{\{\ccA_{(b)}\}_1^B, \, n}^\ast(t\mid X)-\Lambda (t\mid X)\right\rvert.
\end{align*}
It should be noted that in Section \ref{sec:adaptive}, we did not treat the tree and forest structures ($\ccA$ and $\{\ccA_{(b)}\}_1^B$) as random variables. Instead, they were treated as elements of the valid structure sets. However, in this section, once a particular splitting rule is specified, these structures become random variables associated with certain distributions induced from the splitting rule. When there is no risk of ambiguity, we inherit the notation $\hLambda_{\ccA,n}$ to represent a tree estimator, where the randomness of $\ccA$ is understood as part of the randomness in the estimator itself. A similar strategy is applied to the forest version. Before presenting the consistency results, we make an additional smoothness assumption on the hazard function:

\begin{assumption}
For any fixed time point $t$, the CHF $\Lambda(t\mid x)$ is $L_1$-Lipschitz continuous in terms of $x$, and the hazard function $\lambda(t\mid x)$ is $L_2$-Lipschitz continuous in terms of $x$, i.e., $|\Lambda(t \mid x_1)-\Lambda(t \mid x_2)| \leq  L_1||x_1-x_2||$ and $|\lambda(t \mid x_1)-\lambda(t \mid x_2)|\leq  L_2\|x_1-x_2\|$, respectively, where $\|\cdot\|$ is the Euclidean norm.
\label{asm:lips}
\end{assumption}

We are now ready to state our main consistency results for the proposed survival tree model. Theorem \ref{treeconsistency1} provides the point-wise consistency result. The proof is collected in Appendix \ref{proof:consistency1}.
\begin{theorem}\label{treeconsistency1}
Under Assumptions \ref{asm:tau}--\ref{asm:lips}, the proposed survival tree model with random splitting rule is consistent, i.e., for each $x\in [0,1]^d$,
\begin{align*}
\sup_{t<\tau} \big| \hLambda_{\ccA,n}(t\mid x)-\Lambda (t\mid x) \big| = O\bigg (\sqrt { \frac{\log(n/k)[\log(dk)+\log\log(n)]}{k\log((1-\alpha)^{-1})} }+\Big(\frac{k}{n}\Big)^{\frac{c_1}{d}}\bigg ),
\end{align*}
with probability at least $1-w_n$, where
\begin{align*}
w_n=\frac{2}{\sqrt{n}}+d \exp\Big\{-\frac{c_2^2\log_{1/\alpha}(n/k)}{2d}\Big\}+d \exp\Big\{- \frac{(1-c_2)c_3c_4^2 \log_{1/\alpha}(n/k)}{2d}\Big\},
\end{align*}
and $c_2,c_4\in(0,1)$, $c_3=(1-2\alpha)/8$ and $c_1={c_3(1-c_2)(1-c_4)}/{\log_{1-\alpha}(\alpha)}$.
\end{theorem}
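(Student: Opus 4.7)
The plan is a standard bias--variance decomposition. By the triangle inequality,
\begin{align*}
\sup_{t<\tau}\lvert \hLambda_{\ccA,n}(t\mid x) - \Lambda(t\mid x) \rvert
\leq \sup_{t<\tau}\lvert \hLambda_{\ccA,n}(t\mid x) - \Lambda_{\ccA,n}^\ast(t\mid x) \rvert + \sup_{t<\tau}\lvert \Lambda_{\ccA,n}^\ast(t\mid x) - \Lambda(t\mid x) \rvert.
\end{align*}
The first (variance) term is handled directly by Theorem \ref{thm:bound}, which applies to every valid partition and hence in particular to the random tree produced by the algorithm; this yields the first summand in the stated rate with probability at least $1-2/\sqrt n$. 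The remaining task is to control the bias term.

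For the bias, let $\Node$ denote the terminal node containing $x$. Under Assumption \ref{asm:lips}, both the conditional survival $S(s\mid\cdot)$ and the conditional density $f(s\mid\cdot)=\lambda(s\mid\cdot)S(s\mid\cdot)$ are Lipschitz in $x$ on $[0,\tau]$ (using Assumption \ref{asm:tau} to keep $\lambda$ uniformly bounded). Inserting the Lipschitz bounds into the definition \eqref{nodemeanLambdastar}, the censoring weights $1-G_i(s)$ cancel between numerator and denominator after pulling out $f(s\mid x)$ and $S(s\mid x)$, leaving a remainder of order $\mathrm{diam}(\Node)$; integrating over $s\in[0,\tau]$ yields
\begin{align*}
\sup_{t<\tau}\lvert \Lambda_{\Node,n}^\ast(t) - \Lambda(t\mid x)\rvert \leq C\,\mathrm{diam}(\Node),
\end{align*}
for a constant $C=C(L_1,L_2,\tau,M)$. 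The bias is therefore reduced to a diameter bound on $\Node$.

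The heart of the proof is then to show $\mathrm{diam}(\Node) = O((k/n)^{c_1/d})$ with probability at least $1-w_n+2/\sqrt n$. I would proceed in three sub-steps. First, the $\alpha$-balance constraint is deterministic and yields a lower bound on the depth along every root-to-leaf path: since each split keeps at least an $\alpha$-fraction of the parent's samples, the depth $D$ at any terminal node satisfies $D \geq \log_{1/\alpha}(n/k)$. Second, because the splitting coordinate is drawn uniformly from $\{1,\ldots,d\}$, the count $N_j$ of splits on coordinate $j$ along the path to $\Node$ is Binomial$(D,1/d)$; a multiplicative Chernoff bound together with a union bound over $j$ gives $N_j \geq (1-c_2)\log_{1/\alpha}(n/k)/d$ simultaneously for all $j$ with probability at least $1-d\exp\{-c_2^2\log_{1/\alpha}(n/k)/(2d)\}$, which is the second piece of $w_n$. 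Third, conditional on $\{N_j\}$, the length of $\Node$ along coordinate $j$ is the product of per-split shrinkage factors. Under the bounded-density Assumption \ref{asm:density} one can translate the sample-based $\alpha$-fraction cut into a geometric statement: the cut point lies in a central interval carrying mass at least $c_3=(1-2\alpha)/8$, which forces the expectation of the logarithm of the per-split shrinkage factor to be bounded away from zero. A second Chernoff bound on the sum of these log-shrinkages along the path, with slack parameter $c_4$, produces the third piece of $w_n$, namely $d\exp\{-(1-c_2)c_3c_4^2\log_{1/\alpha}(n/k)/(2d)\}$, and yields a product bounded by $(1-\alpha)^{c_3(1-c_2)(1-c_4)\log_{1/\alpha}(n/k)/d}$; rewriting this base-$(1-\alpha)$ expression as a power of $k/n$ via $\log_{1-\alpha}(\alpha)$ gives $\mathrm{diam}(\Node)=O((k/n)^{c_1/d})$ with $c_1=c_3(1-c_2)(1-c_4)/\log_{1-\alpha}(\alpha)$. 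A final union bound over the three failure events produces the stated probability $w_n$. The main obstacle is this third sub-step: the cut-point mechanism is sample-adaptive, so the shrinkage factors depend on the empirical distribution of the samples inside the node, and it is precisely here that the two-sided density bound of Assumption \ref{asm:density} is indispensable for obtaining an effective geometric contraction.
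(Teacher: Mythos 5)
Your proposal is correct and follows essentially the same route as the paper: the same triangle-inequality split into the adaptive concentration bound of Theorem \ref{thm:bound} plus a bias term, the same Lipschitz sandwich of $\Lambda^\ast_{\Node,n}$ around $\Lambda(\cdot\mid x)$ reducing the bias to the node diameter, and the same two-stage Chernoff argument (Binomial$(m,1/d)$ coordinate counts with slack $c_2$, then Binomial$(m_i,c_3)$ counts of geometrically contracting splits with slack $c_4$, using Assumption \ref{asm:density} to convert sample quantiles into length shrinkage). The constants $c_1,\dots,c_4$ and the three pieces of $w_n$ arise exactly as you describe.
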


\begin{remark}
The first part $\sqrt{ \frac{\log(n/k)[\log(dk)+\log\log(n)]}{k\log((1-\alpha)^{-1})} }$ in the bound comes from the concentration results and the second part $({k}/{n})^{{c_1}/{d}}$ comes from the bias. We point out that the optimal rate is obtained by setting $k=n^{{c_3}/{[c_3+d\log_{1-\alpha}(\alpha)/2]}}$, and then the optimal rate is close to $n^{-{c_3}/{[2c_3+d\log_{1-\alpha}(\alpha)]}}$. If we further assume that we always split at the middle point at each internal node, then the optimal rate degenerates to $n^{-{1}/{(d+2)}}$ obtained by setting $k=n^{{d}/{(d+2)}}$.
\label{remark}
\end{remark}

The consistency result can be easily extended to survival forests with $B$ trees. Theorem \ref{survivalforestcons1.1} presents an integrated version, which can be derived from Theorem \ref{treeconsistency1}. The proof is collected in Appendix \ref{proof:consistency1}.

\begin{theorem}\label{survivalforestcons1.1}
Under Assumptions \ref{asm:tau}-\ref{asm:lips}, the proposed survival forest is consistent, i.e.,
\begin{align*}
&\lim_{B\rightarrow \infty} \sup_{t<\tau} E_X|\hLambda_{\{\ccA_{(b)}\}_1^B,n}(t\mid X)-\Lambda(t\mid X)| \\ \nonumber
=&~ O \bigg(\sqrt {  \frac{\log(n/k)[\log(dk)+\log\log(n)]}{k\log((1-\alpha)^{-1})} }+\Big(\frac{k}{n}\Big)^{\frac{c_1}{d}}+\log(k)w_n \bigg),
\end{align*}
where $w_n$ is a sequence approaching to 0 as defined in Theorem \ref{treeconsistency1}, $c_2,c_4\in (0,1)$, $c_3=(1-2\alpha)/8$ and $c_1=c_3(1-c_2)(1-c_4)/\log_{1-\alpha}(\alpha)$.
\end{theorem}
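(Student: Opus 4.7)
The plan is to reduce forest consistency to the single-tree bound of Theorem \ref{treeconsistency1} via the triangle inequality, and then convert that high-probability statement into an expectation bound. Since the forest estimator is the arithmetic mean of $B$ tree estimators, for every $(t,x)$,
$$\big|\hLambda_{\{\ccA_{(b)}\}_1^B,n}(t\mid x)-\Lambda(t\mid x)\big| \;\leq\; \frac{1}{B}\sum_{b=1}^B \big|\hLambda_{\ccA_{(b)},n}(t\mid x)-\Lambda(t\mid x)\big|.$$
Taking $\sup_{t<\tau}$ inside each summand and then $E_X$ on both sides, and using the elementary inequality $\sup_t E_X(\cdot) \leq E_X\sup_t(\cdot)$, yields
$$\sup_{t<\tau} E_X\big|\hLambda_{\{\ccA_{(b)}\}_1^B,n}(t\mid X)-\Lambda(t\mid X)\big| \;\leq\; \frac{1}{B}\sum_{b=1}^B E_X\Big[\sup_{t<\tau}\big|\hLambda_{\ccA_{(b)},n}(t\mid X)-\Lambda(t\mid X)\big|\Big].$$
Thus it suffices to control the inner expectation for a single tree, uniformly over the random splitting rule.

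For a single tree, Theorem \ref{treeconsistency1} gives, for each fixed $x$, $\sup_{t<\tau}|\hLambda_{\ccA,n}(t\mid x)-\Lambda(t\mid x)| = O(R_n)$ with probability at least $1-w_n$, where $R_n$ denotes the rate stated there. I would convert this to an expectation bound by decomposing against the good event: on the good event the sup-error is $O(R_n)$; on the complement (of probability at most $w_n$) a deterministic ceiling is needed. Assumption \ref{asm:tau} gives $\Lambda(t\mid X) \leq -\log M$ for $t<\tau$, and the Nelson-Aalen computed from a terminal node with $m$ at-risk observations is at most the harmonic sum $\sum_{i=1}^m 1/i \leq \log m + 1$. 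Because valid partitions enforce a minimum terminal node size tied to $k$ and the $\alpha$-valid structure keeps node sizes comparable to $k$, this ceiling is $O(\log k)$. Combining with Fubini,
$$E_X\Big[\sup_{t<\tau}\big|\hLambda_{\ccA_{(b)},n}(t\mid X) - \Lambda(t\mid X)\big|\Big] = O\bigl(R_n + \log(k)\,w_n\bigr).$$

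Substituting back into the forest bound, every summand obeys the same estimate, so the arithmetic mean does too for every $B$, and the bound survives $B\to\infty$ (by the strong law applied to the i.i.d.\ tree-level errors given the data). The main technical obstacle is the ceiling on the bad event: one must rule out a blow-up of the within-node Nelson-Aalen uniformly over all terminal nodes in every valid partition, and sharpen the naive $O(\log n)$ harmonic-sum bound to $O(\log k)$ by exploiting the minimum-terminal-size and $\alpha$-valid structure; this is precisely what the extra $\log(k)\,w_n$ correction in the stated rate encodes. A secondary subtlety, needed to move $E_X$ past the probability statement of Theorem \ref{treeconsistency1} cleanly, is that $w_n$ does not depend on $x$, which follows from the uniform-in-$x$ form of the adaptive concentration of Theorem \ref{thm:bound}.
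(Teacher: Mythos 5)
Your proposal is correct and follows essentially the same route as the paper: bound the single-tree error on the low-probability bad event by the deterministic ceiling $O(\log k)$ (harmonic-sum bound on the within-node Nelson--Aalen estimator plus boundedness of $\Lambda(t\mid x)$ for $t<\tau$ under Assumption \ref{asm:tau}), combine with the high-probability rate of Theorem \ref{treeconsistency1} to get the tree-level expectation bound with the extra $\log(k)\,w_n$ term, and then pass to the forest by the triangle inequality applied to the average of the $B$ tree estimators, a bound that holds for every $B$ and hence in the limit. The only cosmetic difference is your appeal to a strong-law argument for $B\to\infty$, which is unnecessary since the deterministic averaging bound already holds uniformly in $B$, exactly as in the paper.
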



\subsection{Consistency under adaptive splitting rules}\label{sec:consistency2}

We have so far established consistency results for both survival trees and forests for finite dimension $d$. In this section, we allow the dimension $d$ go to infinity with sample size $n$ while the covariates are possibly correlated. We note that this is not a commonly used setting in the literature because it can be difficult to control the marginal distribution of a noise variable. We first provide several definitions. We assume that there are $d_0$ important features involved in the failure time distribution, and denote $\mathcal M_F$ the set of their indices, hence $\mathcal M_F \subset \{1, \ldots d\}$. Precisely, we have $T \perp X | X_{\mathcal M_{F}}$. The number of features involved in both the failure time and censoring time distributions is $d_0 + d_1$, where $d_0$ and $d_1$ are fixed, and denoted $\mathcal M_{FC}$ their indices. However, this does not imply that there are only $d_1$ variables for the censoring distribution, because the failure variables and censoring variables may share some common indices. For example, age, as a commonly used demographic variable can be informative for both the clinical outcome of interest (failure) and the lost to follow-up (censoring). However, splitting such variables is necessary as long as they are involved in the failure distribution. The splits that we may want to avoid is on the variables in the set $\mathcal M_C$ defined as $\mathcal M_{FC} \setminus \mathcal M_F$, making $d_1 = |\mathcal M_C|$. However, our later analysis indicates that this may not be avoidable because of the biasedness caused by censoring. Lastly, we define the set of noise variables' indices as $\mathcal M_N = \{1, \ldots d\} \setminus \mathcal M_{FC}$. We make the following assumption on the weak dependencies between noise variables and variables in $\mathcal M_{FC}$.

\begin{assumption}
We assume that the conditional distribution of the failure and censoring covariates $X_{\mathcal M_{FC}}$  has weak dependencies on any univariate noise variable, in the sense that for some constant $\gamma>1$, we have
$$\gamma^{-1} <\frac{p(X_{\mathcal M_{FC}}=x|X^{(j)}=x_1)}{p(X_{\mathcal M_{FC}}=x|X^{(j)}=x_2)}< \gamma,$$
for any $x_1,x_2\in [0,1]$, $(d_0+d_1)$-dimensional vector $x$ and any $j\in \mathcal M_{N}$.
\label{asm:cor}
\end{assumption}

Assumption \ref{asm:cor} relaxes the commonly used independent covariate assumption in the literature \cite{biau2012analysis, zhu2015reinforcement, scornet2015consistency, doi:10.1080/01621459.2017.1319839}. However, this poses significant difficulties when evaluating the marginal signal carried by a noise variable, meaning that the difference between the two potential child nodes may not be 0. A large threshold is necessary to prevent the noise variables from entering the model, as shown in the later results. However, as the correlation reduces to 0, i.e., $\gamma = 1$, the threshold will naturally degenerate to 0. We further need an assumption on the effect size of the failure variable.

\begin{assumption}\label{asm:censor} Marginal signal of the failure distribution. Let $\Node$ be any internal node, and $j \in {\mathcal M_F}$ be a variable that has never been split, i.e., the range of $X^{(j)}$ in node $\Node$ is $[0, 1]$. Let $\Node^+_j$ and $\Node^-_j$ be defined as $\Node_j^+(c)=\{ X: X^{(j)} \geq c\}$ and $\Node_j^-(c)=\{ X: X^{(j)} < c\}$, respectively. And further define the right/left average CHFs as
\bal
\ell^+(j,t,c)&= \int_0^t \frac{E_{X\in \Node^+_j (c)}f(s\mid X)}{E_{X\in \Node^+_j (c)} [1-F(s\mid X)]} \, \textup{d}s, \nonumber \\
\ell^-(j,t,c)&= \int_0^t \frac{E_{X\in \Node^-_j (c)} f(s\mid X)}{E_{X\in \Node^-_j (c)}  [1-F(s\mid X)]} \, \textup{d}s. \nonumber
\eal
Then there exists a time point $t_0 \in [0, \tau]$, a constant $c_0\in (0,1)$, and minimum effect size $\ell>2(\gamma^2-\gamma^{-2} ) {\tau L}/{M^2}$, such that
\bal
 & {M} \ell^+(j,t_0,c_0) - {M}^{-1} \ell^-(j,t_0,c_0) \, > \ell,  \quad \text{if} \quad   \ell^+(j,t_0,c_0) > \ell^-(j,t_0,c_0), \nonumber \\
\text{or} \,\,\,\,  & {M}  \ell^-(j,t_0,c_0) - {M}^{-1} \ell^+(j,t_0,c_0) \, > \ell,  \quad \text{if}  \quad  \ell^+(j,t_0,c_0) < \ell^-(j,t_0,c_0), \nonumber
\eal
where $\tau$ and $M$ are defined in Assumption~\ref{asm:tau}, $\gamma$ is defined in Assumption~\ref{asm:cor}, and $L$ is an upper bound of $f(t|x)$ for all $x \in [0,1]^d$.
\end{assumption}

This assumption can be interpreted as the following. First, $\ell^+(j, t, c)$ and $\ell^-(j, t, c)$ are the averaged CHFs on the left and right-hand side, respectively, of a split at the $j$th variable. The constant $M$ and its reciprocal can be understood as the minimum and maximum contamination, respectively, of the censoring distribution on these CHFs. The assumption requires that if the difference of these contaminated versions is sufficiently large at some time point $t_0$ and some cut point $c_0$. Note that $ M$ is a lower bound of $\pr(C \geq \tau \mid X = x)$, this essentially bounds below the signal size regardless of any dependency structures between $C$ and $T$. However, in some trivial cases, such as when the $G_i$'s are identical, the constant $M$ can be removed from the assumption due to the independence between $T$ and $C$. A simplified version will be provided in Assumption \ref{asm:censor2} in Section \ref{sec:biascorrect}. Furthermore, $\ell$ can be an arbitrarily small constant if $\gamma = 1$, which is essentially the independent covariates case.

Another important observation of this assumption is that $t_0$ can be arbitrary. Hence, we essentially allow the CHFs of different subjects to cross each other. As a comparison, we note that in many popular survival models, such as the Cox proportional hazard model, the CHF is a monotone function of $X$ on the entire time domain. Hence the survival curve of any subject can only be completely above or below that of another subject. However, when the survival curves cross each other, a log-rank test may not be effectively \cite{fleming2011counting, eng2005sample}. When we incorporate the splitting rule (in Algorithm \ref{alg:consistency2}) that detects the maximum differences on $[0, \tau)$. Hence, our model is capable of detecting non-monotone signals of the CHF as a function of both $X$ and $t$, making our approach more powerful than the traditional log-rank test splitting rule.

\begin{algorithm}[ht] 
\SetAlgoLined
\caption{A marginal splitting rule for survival forest} \label{alg:consistency2}
\ShowLn At any internal node $\Node$ containing at least $2k$ training samples, we pick a splitting variable $j\in\{1,\ldots,d\}$ uniformly at random\;
\ShowLn We then pick the splitting point $\tilde c$ using the following rule such that both child nodes contain at least proportion $\alpha$ of the samples at $\Node$:
\begin{align*}
\tilde c= \underset{c}{\arg\max} \,\, \Delta_1(c),
\end{align*}
where $\Delta_1(c)=\max_{t<\tau} \big|\hLambda_{\Node_j^+(c),n}(t)-\hLambda_{\Node_j^-(c),n}(t) \big|$,
$\Node_j^+(c)=\{ X: X^{(j)} \geq c\}$, and $\Node_j^-(c)=\{ X: X^{(j)} < c\}$, $X^{(j)}$ is the $j$-th dimension of $X$\;
\ShowLn If the variable $j$ has already been used along the sequence of splitting rules leading up to $\Node$, or the following inequality holds for some constant $M_3$:
\begin{align*}
\Delta_1(\tilde c)\geq (\gamma^2-\gamma^{-2} ) \frac{\tau L}{M^2}+M_3  \sqrt {  \frac{\log(n/k)[\log(dk)+\log\log(n)]}{k\log((1-\alpha)^{-1})} },
\end{align*}
then we split at $\tilde c$ along the $j$-th variable. If not, we randomly sample another variable out of the remaining variables and proceed to Step 2). When there is no remaining feasible variable, we randomly select an index out of $d$ and split at $\tilde c$.
\end{algorithm}

Finally, to make a splitting rule concrete, we provide Algorithm \ref{alg:consistency2}, which marginally compares the estimated CHF over all time points and uses the difference to select the best split. Based on this algorithm, Lemma \ref{lemma:splitcombine2} in Appendix~\ref{proof:consistency2} shows that our $d$ dimensional survival forest is equivalent to a $(d_0+d_1)$ dimensional survival forest with probability larger than $1-3/\sqrt{n}$. This means that with a large probability, we shall never split on the noise variable set $\mathcal{M}_N$. We highlight here that $\Lambda_{\Node, n}^\ast(t)$ is an essential tool to prove Lemma \ref{lemma:splitcombine2}. The intuition here is that when the failure distribution doesn't depend on the variable $j$, the quantity

\begin{align*}
&\int_0^t \frac{E_{X\in \Node_j^+(x)} [1-G(s\mid X)] \textup{d} F(s\mid X)}{E_{X\in \Node_j^+(x)} [1-G(s\mid X)][1 - F(s\mid X)]} \\
&\qquad \qquad \qquad  -\int_0^t \frac{E_{X\in \Node_j^-(x)} [1-G(s\mid X)] \textup{d} F(s\mid X)}{E_{X\in \Node_j^-(x)} [1-G(s\mid X)][1 - F(s\mid X)]}
\end{align*}
is bounded by a small constant under weak dependency. However, this is quantity will degenerate to 0 as long as the dependency vanishes, i.e., $\gamma = 1$, since $dF(s|X)/[1-F(s|X)]$ separates regardless of what the censoring distribution is. The proof is indeed beautiful and neat, which is deferred to Appendix~\ref{proof:consistency2}.

Notice that $\Lambda_{\Node, n}^\ast(t)$ is a sample version of the asymptotic distribution of the terminal node $\Node$. In Lemma \ref{lemma:sampleexpectation}, we show the bound of the difference of $\Lambda_{\Node, n}^\ast(t)$ and its integrated version $\Lambda_\Node^\ast(t)$ across all valid nodes $\Node$, where $\Lambda_\Node^\ast(t)$ is as defined in Equation \eqref{nodetrueLambdastar}. The proof is given in Appendix \ref{proof:consistency2}.

%



\begin{lemma}%
Under Assumptions~\ref{asm:tau}-\ref{asm:k} and further assume that the conditional density function $f(t \mid x)$ of the failure time $T$ is bounded by $L$ for all $x\in [0,1]^d$. The difference between $\Lambda_{\ccA, n}^\ast(t)$ and $ \Lambda_\ccA^\ast(t)$ is bounded by
\begin{align*}
\sup_{t<\tau,\,x\in [0,1]^d,\,\ccA \in \cV_{\alpha,k}(\cD_n) } & \left\lvert \Lambda_{\ccA, n}^\ast(t\mid x)- \Lambda_\ccA^\ast(t\mid x) \right\rvert \\\nonumber
\leq&~ M_2\sqrt {  \frac{\log(n/k)[\log(dk)+\log\log(n)]}{k\log((1-\alpha)^{-1})} },
\end{align*}
with probability larger than $1-1/\sqrt n$.
\label{lemma:sampleexpectation}
\end{lemma}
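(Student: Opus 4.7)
The plan is to reduce the quotient difference $\Lambda_{\ccA,n}^\ast(t\mid x) - \Lambda_\ccA^\ast(t\mid x)$ to uniform concentration of a numerator and a denominator process over all valid terminal nodes, and then invoke the adaptive partition-counting device already used in the proof of Theorem \ref{thm:bound}. Since $\Lambda_{\ccA,n}^\ast(t\mid x) = \Lambda_{\Node,n}^\ast(t)$ and $\Lambda_\ccA^\ast(t\mid x) = \Lambda_\Node^\ast(t)$ whenever $x$ lies in a terminal node $\Node$ of $\ccA$, it suffices to control $\sup_{t<\tau,\, \ccA \in \cV_{\alpha,k}(\cD_n),\, \Node \in \ccA}\bigl\lvert \Lambda_{\Node,n}^\ast(t) - \Lambda_\Node^\ast(t) \bigr\rvert$.

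For any valid node $\Node$ with $n_\Node \geq k$ samples, introduce the sample processes
\bal
N_\Node(s) &= \frac{1}{n_\Node}\sum_{X_i \in \Node} [1-G(s\mid X_i)]\, f(s\mid X_i), \nc
D_\Node(s) &= \frac{1}{n_\Node}\sum_{X_i \in \Node} [1-G(s\mid X_i)][1-F(s\mid X_i)], \nn
\eal
together with their population versions $\bar N_\Node(s), \bar D_\Node(s)$ obtained by replacing the empirical average over $X_i \in \Node$ by $E_{X \in \Node}$. Then $\Lambda_{\Node,n}^\ast(t) = \int_0^t N_\Node(s)/D_\Node(s)\,\d s$ and $\Lambda_\Node^\ast(t) = \int_0^t \bar N_\Node(s)/\bar D_\Node(s)\,\d s$. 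Assumption \ref{asm:tau} together with conditional independence $T\perp C\mid X$ yields $\bar D_\Node(s) \geq M$ for all $s<\tau$, while the density bound $f \leq L$ gives $N_\Node, \bar N_\Node \leq L$. The elementary identity
\bal
\frac{N_\Node(s)}{D_\Node(s)} - \frac{\bar N_\Node(s)}{\bar D_\Node(s)} = \frac{N_\Node(s) - \bar N_\Node(s)}{\bar D_\Node(s)} + \frac{\bar N_\Node(s)\,[\bar D_\Node(s) - D_\Node(s)]}{D_\Node(s)\,\bar D_\Node(s)}, \nn
\eal
integration in $s$ over $[0,\tau)$, and the bounds above, reduce the claim to uniform control of $\sup_{s<\tau}|N_\Node(s) - \bar N_\Node(s)|$ and $\sup_{s<\tau}|D_\Node(s) - \bar D_\Node(s)|$ across all valid $\Node$; the auxiliary lower bound $D_\Node(s) \geq M/2$ needed in the denominator of the second fraction is itself a byproduct of the denominator concentration.

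For fixed $\Node$ and $s$, both deviations are averages of independent summands bounded by $L$ and $1$ respectively, conditional on $\{i : X_i \in \Node\}$, so Hoeffding's inequality produces sub-Gaussian tails with proxy variance $O(1/n_\Node) \leq O(1/k)$. Uniformity in $s$ comes from an $\epsilon$-net on $[0,\tau)$: the two integrand processes are Lipschitz in $s$ with modulus controlled by $L$, so a grid of width $1/n^{2}$ costs only $O(\log n)$ extra entries in the union bound. Uniformity over $\Node$ is inherited from the proof of Theorem \ref{thm:bound}: rectangle faces can be snapped to observed covariate coordinates without changing their sample content, and the recursive $(1-\alpha)$-balanced splitting constraint bounds the log-cardinality of admissible hyper-rectangular terminal nodes by an order of $\log(n/k)[\log(dk) + \log\log n]/\log((1-\alpha)^{-1})$. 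A single union bound over this family and the $s$-grid delivers both suprema at the stated rate with failure probability at most $1/\sqrt n$; combining via the quotient expansion above and absorbing $\tau$, $L$, $M$ into a single constant $M_2$ completes the argument.

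The main obstacle is not the concentration for a single node, which is routine, but achieving uniformity over the \emph{data-adaptive} family $\cV_{\alpha,k}(\cD_n)$: one cannot directly union-bound over a deterministic class since the partitions themselves depend on $\cD_n$. Fortunately this is exactly the obstacle already overcome in the proof of Theorem \ref{thm:bound}, and the combinatorial reduction there applies verbatim here because the two integrands appearing in $\Lambda_\Node^\ast$ are uniformly bounded.
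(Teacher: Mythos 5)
Your proposal is correct and follows essentially the same route as the paper: Hoeffding concentration for the bounded numerator and denominator averages, a quotient decomposition using $f\leq L$ and the lower bound $M$ on the denominator, and a union bound over the approximating rectangle family $\mathcal{R}$ used in the proof of Theorem~\ref{thm:bound}, which yields the $\log(n/k)[\log(dk)+\log\log(n)]/\log((1-\alpha)^{-1})$ factor at the stated $1-1/\sqrt{n}$ level. Your explicit $\epsilon$-net treatment of uniformity in $s$ spells out a step the paper's proof leaves implicit, but it does not alter the argument.
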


Based on Lemma \ref{lemma:splitcombine2} provided in Appendix~\ref{proof:consistency2}, we essentially only split on $(d_0+d_1)$ dimensions with probability larger than $1-3/\sqrt{n}$ on the entire tree. The consistency holds from Theorem \ref{treeconsistency1}. The following result shows the consistency of the proposed survival forest. The proof is almost identical to Theorem \ref{survivalforestcons1.1}:


\begin{theorem}\label{survivalforestcons2}
Under Assumptions \ref{asm:tau}-\ref{asm:censor}, the proposed survival tree using the splitting rule specified in Algorithm \ref{alg:consistency2} is consistent, i.e., for any $x$, 
\begin{align*}
& \sup_{t<\tau} |\hLambda_{\{\ccA_{(b)}\}_1^B,n}(t\mid x)-\Lambda(t\mid x)| \\ \nonumber
=&~ O \bigg(\sqrt {  \frac{\log(n/k)[\log\{(d_0+d_1)k\}+\log\log(n)]}{k\log((1-\alpha)^{-1})} }+\Big(\frac{k}{n}\Big)^{\frac{c_1}{d_0+d_1}}\bigg),
\end{align*}
with probability at least $1-w_n$, where
\begin{align*}
w_n=\frac{3}{\sqrt{n}}+(d_0+d_1)\Big[\exp\Big\{-\frac{c_2^2\log_{1/\alpha}(n/k)}{2(d_0+d_1)}\Big\}+ \exp\Big\{- \frac{(1-c_2)c_3c_4^2 \log_{1/\alpha}(n/k)}{2(d_0+d_1)}\Big\}\Big],
\end{align*}
$c_2,c_4\in (0,1)$, $c_3=(1-2\alpha)/8$ and $c_1=c_3(1-c_2)(1-c_4)/\log_{1-\alpha}(\alpha)$. Consequently, the proposed survival forest is consistent, i.e.,
\begin{align*}
&\lim_{B\rightarrow \infty} \sup_{t<\tau} E_X|\hLambda_{\{\ccA_{(b)}\}_1^B,n}(t\mid X)-\Lambda(t\mid X)| \\ \nonumber
=&~ O \bigg(\sqrt {  \frac{\log(n/k)[\log\{(d_0+d_1)k\}+\log\log(n)]}{k\log((1-\alpha)^{-1})} }+\Big(\frac{k}{n}\Big)^{\frac{c_1}{d_0+d_1}}+\log(k)w_n \bigg).
\end{align*}
\end{theorem}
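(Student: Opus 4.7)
Plan: The proof proceeds by a variance-bias decomposition in which the adaptive rejection step of Algorithm \ref{alg:consistency2} effectively reduces the $d$-dimensional problem to a $(d_0+d_1)$-dimensional one, after which the argument follows the template of Theorems \ref{thm:bound} and \ref{treeconsistency1}. First I would invoke Lemma \ref{lemma:splitcombine2} to pass to the event $\mathcal{E}_1$, of probability at least $1-3/\sqrt{n}$, on which every split throughout the entire tree uses a coordinate in $\mathcal{M}_{FC}$. The mechanism is that the threshold $(\gamma^2-\gamma^{-2})\tau L/M^2+M_3\sqrt{\cdots}$ in Algorithm \ref{alg:consistency2} is designed so that, under Assumption \ref{asm:cor}, any noise coordinate's maximal marginal CHF gap $\Delta_1(\tilde c)$ stays below the threshold, whereas by Assumption \ref{asm:censor} some failure coordinate always exceeds it. On $\mathcal{E}_1$ the tree is therefore indistinguishable from a uniform random-splitting tree on the $(d_0+d_1)$-dimensional subspace indexed by $\mathcal{M}_{FC}$.

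Next I would apply the triangle decomposition
\begin{align*}
|\hLambda_{\ccA,n}(t\mid x)-\Lambda(t\mid x)|
\leq{}& |\hLambda_{\ccA,n}(t\mid x)-\Lambda^*_{\ccA,n}(t\mid x)|\\
&+|\Lambda^*_{\ccA,n}(t\mid x)-\Lambda^*_{\ccA}(t\mid x)|+|\Lambda^*_{\ccA}(t\mid x)-\Lambda(t\mid x)|,
\end{align*}
bounding the first term uniformly by Theorem \ref{thm:bound} and the second by Lemma \ref{lemma:sampleexpectation}; on $\mathcal{E}_1$ both collapse to the advertised square-root rate with $d$ replaced by $d_0+d_1$. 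The substantive work concerns the last (bias) term. Fixing $x$ with terminal node $\Node$, I would exploit the fact that $F(s\mid X)$ depends on $X$ only through $X_{\mathcal{M}_F}$ to rewrite $\Lambda^*_\Node(t\mid x)$ from \eqref{nodetrueLambdastar} and, via Assumption \ref{asm:lips} restricted to $\mathcal{M}_F$ coordinates together with the Assumption \ref{asm:tau} lower bound on $1-G$, obtain
\[
\sup_{t<\tau}|\Lambda^*_\Node(t\mid x)-\Lambda(t\mid x)|=O\bigl(\mathrm{diam}_{\mathcal{M}_F}(\Node)\bigr).
\]

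It then remains to bound this diameter. On $\mathcal{E}_1$ the split coordinates are effectively i.i.d.\ uniform on $\mathcal{M}_{FC}$, so the concentration-and-union-bound argument used for Theorem \ref{treeconsistency1} transfers verbatim with $d\rightsquigarrow d_0+d_1$: each $\mathcal{M}_F$ side length contracts by at least $(1-\alpha)$ on a $\tfrac{1}{d_0+d_1}$ fraction of splits, a depth of order $\log_{1/\alpha}(n/k)$ is attained, and the same two-stage Bernstein bound yields $\mathrm{diam}_{\mathcal{M}_F}(\Node)=O((k/n)^{c_1/(d_0+d_1)})$ simultaneously over all terminal nodes, with the two exponential terms in $w_n$ quantifying the failure probability. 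The forest extension then follows exactly as in Theorem \ref{survivalforestcons1.1}: averaging over $B$ valid trees preserves the pointwise bound, letting $B\to\infty$ removes the ensemble variance, and integrating the low-probability remainder against $X$ contributes the additive $\log(k)\,w_n$ term through the $\tau$-truncated boundedness of $\Lambda$.

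The principal obstacle is establishing Lemma \ref{lemma:splitcombine2}. On any noise coordinate, the marginal difference $\Delta_1$ does \emph{not} vanish in general, because $T\not\perp C\mid \mOne(X^{(j)}<c)$ even when $j\in\mathcal{M}_N$; the weak-correlation Assumption \ref{asm:cor} is precisely what caps this spurious contamination at $(\gamma^2-\gamma^{-2})\tau L/M^2$, while the extra $M_3\sqrt{\cdots}$ buffer absorbs the uniform within-node sampling fluctuations via Theorem \ref{thm:bound}. Pairing this cap against the signal lower bound in Assumption \ref{asm:censor} is what finally separates $\mathcal{M}_F$ from $\mathcal{M}_N$, drives the dimension reduction, and explains why the resulting rate still involves $d_0+d_1$ rather than $d_0$: the tree continues to squander splits on $\mathcal{M}_C$ coordinates even though those splits do not improve the $\mathcal{M}_F$-diameter directly.
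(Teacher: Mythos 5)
Your proposal is correct and follows essentially the same route as the paper: the paper's proof is precisely to invoke Lemma \ref{lemma:splitcombine2} to restrict, with probability at least $1-3/\sqrt{n}$, to splits on the $(d_0+d_1)$ coordinates in $\mathcal{M}_{FC}$, and then apply the argument of Theorem \ref{treeconsistency1} (concentration via Theorem \ref{thm:bound} and Lemma \ref{lemma:sampleexpectation} plus the diameter-based bias bound) with $d$ replaced by $d_0+d_1$, extending to the forest exactly as in Theorem \ref{survivalforestcons1.1}. Your additional commentary on why the threshold in Algorithm \ref{alg:consistency2} separates $\mathcal{M}_F$ from $\mathcal{M}_N$ matches the paper's proof of that lemma.
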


Although we have developed a result where $d$ can grow exponentially fast with $n$ in this section, the splitting rule implemented was not completely the same as the practically used version because it essentially checks only the signal where the candidate variables have never been used. This is made possible with Assumption \ref{asm:censor} that the difference between two potential child nodes resulted from $X^{(j)} < c_0$ versus $X^{(j)} \geq c_0$ is sufficiently large. Once a variable is used, it will be automatically included as a candidate in subsequent splits. The idea is similar to the protected variable set used in \cite{zhu2015reinforcement}, where the protected set serves as the collection of variables that have been used in previous nodes.

\begin{remark}
We provide a consistency result for survival trees and forests under weak dependency framework. We highlight that our results hold naturally if $X$ is uniformly distributed with $\zeta=\gamma=1$. Notice that when variables are uncorrelated, our results are still meaningful and constructive: the biasedness is not due to correlated variables but the entanglement between failure time and censoring time marginally. For example, if censoring time shares one common variable with failure time, then all other censoring variables also play a role in the limiting distribution, so there is no guarantee to split only on failure variables.
\end{remark}

\section{A bias-corrected survival forest}\label{sec:biascorrect}

\subsection{Methodology\label{sec:method}}

Recall that in Section~\ref{sec:flaws}, we investigated the estimation bias in the comparison of two potential child nodes. This is caused by ignoring the within-node heterogeneity of the censoring distribution while it is entangled with the failure distribution. A closer look at Equation~\eqref{nodemeanLambdastar} motivates us to perform a weighted version of the Nelson-Aalen estimator to correct this bias and estimate the true within-node averaged CHF. Hence, we consider
\bal
\tLambda_{\Node,n}(t) =&~ {\textstyle\sum}_{s \leq t} \frac{\sum_{i=1}^n \mOne(\delta_i = 1)\mOne(Y_i = s)\mOne(X_i \in \Node)/[1-\widehat G(s|X_i)]}{\sum_{i=1}^n \mOne(Y_i \geq s)\mOne(X_i \in \Node)/[1-\widehat G(s|X_i)]}, \label{eqn:wNA}
\eal
where $\widehat G(s|X_i)$ is an estimated conditional censoring distribution function. Note that this estimator resembles a form of the inverse probably weighting strategy \cite{rotnitzky2005inverse}, which is an extensively studied technique in the survival analysis and missing data literature \cite{robins1992recovery}. There have been many different forms of inverse probably weighted estimators under a variety of contexts. For example, \cite{hothorn2005survival} uses $\delta_i /(1-\widehat G(Y_i|X_i))$ as the subject-specific weight to fit regression random forests. One can also transform the censored observations into fully observed ones using, e.g., \cite{rubin2007doubly}, and then fit a regression model with the complete data \cite{molinaro2004tree, steingrimsson2016doubly, steingrimsson2017censoring}. Similar ideas have also been used for imputing censored outcomes \cite{zhu2012recursively} when learning an optimal treatment strategy \cite{cui2017tree}.

However, our proposal is fundamentally different from these existing methods. We estimate and compare an inverse probably weighted hazard function using weight $\delta_i /(1-\widehat G(s|X_i))$ and repeatedly performing this at each internal node. A key observation is that the comparison is over the entire domain of the survival time instead of fitting regression forests based on complete observations. This is a unique advantage because the distribution function contains richer information than the within-node means. This makes our approach more sensitive for detecting differences between two potential child nodes at all quantile levels of the survival time. It also advocates the goal of a typical survival analysis model, where the survival function is the target of interest rather than the expected survival time. The intuition has a close connection with the sup-log-rank test statistic \cite{fleming2011counting, eng2005sample}, which can be used to detect any distributional difference of $T$ of the two potential child nodes. Furthermore, with the following modified algorithm, we can achieve an improved convergence rate that depends only the size of $\mathcal{M}_F$.

Algorithm~\ref{alg:consistency2} can be modified accordingly to incorporate this new procedure. In particular, at each internal node $\Node$, we use the weighted CHF estimator $\tLambda_{\Node,n}(t)$ defined in Equation~\eqref{eqn:wNA}. We then pick the splitting point $\tilde c$ with the rule such that both child nodes contain at least proportion $\alpha$ of the samples at $\Node$:
\begin{align*}
\tilde c= \underset{c}{\arg\max} \,\, \Delta_2(c),
\end{align*}
where $\Delta_2(c)=\max_{t<\tau} \big|\tLambda_{\Node_j^+(c),n}(t)-\tLambda_{\Node_j^-(c),n}(t) \big|$, $\Node_j^+(c)=\{ X: X^{(j)} \geq c\}$, and $\Node_j^-(c)=\{ X: X^{(j)} < c\}$, $X^{(j)}$ is the $j$-th dimension of $X$.

%

Note that the threshold of $\Delta_2(c)$ in this bias-corrected version is the same as the one used for $\Delta_1(c)$ in Algorithm~\ref{alg:consistency2}. The intuition is that after removing the censoring bias, variables in $\mathcal{M}_{C}$ play the same role as noise variables in $\mathcal{M}_{N}$. In addition, the signal size Assumption~\ref{asm:censor} can be relaxed to the following.

\begin{assumption}\label{asm:censor2} Marginal signal of the failure distribution. Let $\ell^+(j, t_0, c_0)$, $\ell^-(j, t_0, c_0)$ and effect size $l$ be as defined in Assumption~\ref{asm:censor}. Then, there exists a time point $t_0$ and a cutting point $c_0$ such that, for any $j \in {\mathcal M_F}$,
\begin{align*}
\bigg| \ell^+(j, t_0, c_0) - \ell^-(j, t_0, c_0) \bigg| > \ell.
\end{align*}
\end{assumption}
Note that this is essentially removing the censoring contaminated part ($M$ and its reciprocal) from Assumption~\ref{asm:censor}. Of course, this is at the cost of plugging-in a consistent estimator of the censoring distribution $G$ to correct the bias. On the other hand, we need an additional assumption on the dependency structures.
\begin{assumption}
We assume that the conditional distribution of the failure and censoring covariates $X_{\mathcal M_{FC}}$  has weak dependencies on any univariate censoring variable, in the sense that for constant $\gamma>1$, we have
$$\gamma^{-1} <\frac{p(X_{\mathcal M_{FC}\setminus \{j\}}=x|X^{(j)}=x_1)}{p(X_{\mathcal M_{FC}\setminus \{j\} }=x|X^{(j)}=x_2)}< \gamma,$$
for any $x_1,x_2\in [0,1]$, $(d_0+d_1-1)$-dimensional vector $x$ and any $j\in \mathcal M_{C}$.
\label{asm:cor2}
\end{assumption}
This is an analog of Assumption~\ref{asm:cor} to further prevent the censoring variables from carrying strong marginal signals due to correlations. It will again degenerate to the commonly used independent covaraite case when $\gamma = 1$. Finally, we show that consistency can be established based on our new model fitting procedure, with convergence rate depends only on the number of variables in $\mathcal{M}_F$. Details of the proof are collected in the Appendix.

\begin{theorem}\label{thm:survivalforestcons3}
Under Assumptions \ref{asm:tau}-\ref{asm:cor},  \ref{asm:censor2} and \ref{asm:cor2}, assuming that $\widehat G$ in Equation~ \eqref{eqn:wNA} is a consistent estimation of the censoring distribution, the proposed bias-corrected survival forest is consistent, i.e., for any $x$,
\begin{align*}
& \sup_{t<\tau} |\hLambda_{\{\ccA_{(b)}\}_1^B,n}(t\mid x)-\Lambda(t\mid x)| \\ \nonumber
=&~ O \bigg(\sqrt {  \frac{\log(n/k)[\log(d_0k)+\log\log(n)]}{k\log((1-\alpha)^{-1})} }+\Big(\frac{k}{n}\Big)^{\frac{c_1}{d_0}}\bigg),
\end{align*}
with probability at least $1-w_n$, where
\begin{align*}
w_n=\frac{3}{\sqrt{n}}+d_0\exp\Big\{-\frac{c_2^2\log_{1/\alpha}(n/k)}{2d_0}\Big\}+ d_0\exp\Big\{- \frac{(1-c_2)c_3c_4^2 \log_{1/\alpha}(n/k)}{2d_0}\Big\},
\end{align*}
$c_2,c_4\in (0,1)$, $c_3=(1-2\alpha)/8$ and $c_1=c_3(1-c_2)(1-c_4)/\log_{1-\alpha}(\alpha)$. Consequently, the proposed survival forest is consistent, i.e.,
\begin{align*}
&\lim_{B\rightarrow \infty} \sup_{t<\tau} E_X|\hLambda_{\{\ccA_{(b)}\}_1^B,n}(t\mid X)-\Lambda(t\mid X)| \\ \nonumber
=&~ O \bigg(\sqrt {  \frac{\log(n/k)[\log(d_0k)+\log\log(n)]}{k\log((1-\alpha)^{-1})} }+\Big(\frac{k}{n}\Big)^{\frac{c_1}{d_0}}+\log(k)w_n \bigg).
\end{align*}
\end{theorem}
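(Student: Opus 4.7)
The plan is to mirror the proof architecture of Theorem \ref{survivalforestcons2} --- a variance/bias decomposition combined with a splitting-rule analysis that restricts all splits to the failure variables --- but with two key substitutions. First, the concentration target for the bias-corrected estimator $\tLambda_{\Node,n}(t)$ shifts from the censoring-contaminated $\Lambda^\ast_{\Node,n}(t)$ to a quantity involving only the failure distribution, so that when combined with Assumption \ref{asm:lips} the remaining bias depends only on the diameter of the terminal node along the $d_0$ failure coordinates. Second, once the censoring contamination has been removed, the censoring variables in $\mathcal M_C$ carry no more marginal signal than the noise variables in $\mathcal M_N$, and the splitting rule rejects both sets with the same threshold. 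Together these facts upgrade the $(d_0+d_1)$-dimensional rate of Theorem \ref{survivalforestcons2} to a $d_0$-dimensional rate.

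The first technical step is an adapted version of Theorem \ref{thm:survbound} for the inverse-probability weighted estimator in \eqref{eqn:wNA}. Since Assumption \ref{asm:tau} supplies a uniform lower bound $M>0$ on $1-G(t\mid x)$ for $t<\tau$, the weights are bounded by $1/M$, and a direct adaptation of the martingale/Bernstein argument underlying Theorem \ref{thm:survbound} shows that the oracle-weighted Nelson--Aalen estimator (that is, \eqref{eqn:wNA} with the true $G$ in place of $\widehat G$) concentrates uniformly in $t<\tau$ on
\begin{align*}
\int_0^t \frac{\sum_{X_i\in \Node}\textup{d} F_i(s)}{\sum_{X_i\in \Node}[1-F_i(s)]}\,,
\end{align*}
which under Assumption \ref{asm:lips} differs from $\Lambda_\Node(t)$ by $O(\textup{diam}(\Node))$. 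The plug-in error from $\widehat G$ enters only through a multiplicative factor $\{1-\widehat G(s\mid X_i)\}/\{1-G(s\mid X_i)\}$ that is uniformly $1+o_p(1)$ by the consistency of $\widehat G$ together with the lower bound from Assumption \ref{asm:tau}, and is absorbed into the leading concentration term. Running the covering argument of Theorem \ref{thm:bound} over all valid partitions then yields a uniform bound of order $\sqrt{\log(n/k)[\log(dk)+\log\log n]/\{k\log((1-\alpha)^{-1})\}}$ for $|\tLambda_{\ccA,n}(t\mid x)-\Lambda_\ccA(t\mid x)|$, up to the Lipschitz bias.

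The second technical step is the analog of Lemma \ref{lemma:splitcombine2} showing that, with probability at least $1-3/\sqrt n$, no split is ever performed on a variable in $\mathcal M_N\cup \mathcal M_C$. For any such $j$, the failure distribution obeys $T\perp X^{(j)}\mid X_{\mathcal M_F}$, so after bias correction the only residual contribution to $\tLambda_{\Node_j^+(c),n}(t)-\tLambda_{\Node_j^-(c),n}(t)$ comes from the conditional density of $X_{\mathcal M_F}$ given $X^{(j)}$. By the weak-dependency Assumptions \ref{asm:cor} (for $j\in\mathcal M_N$) and \ref{asm:cor2} (for $j\in\mathcal M_C$), this contribution is bounded by $(\gamma^2-\gamma^{-2})\tau L/M^2$, matching the deterministic piece of the threshold in the modified splitting rule. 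A union bound over all valid nodes and all $j\in \mathcal M_N\cup\mathcal M_C$, combined with the uniform concentration from Step 1, establishes the rejection. Conversely Assumption \ref{asm:censor2} forces every $j\in\mathcal M_F$ to produce a split gap strictly above the threshold, so a feasible failure-variable split is always found and the modified Algorithm \ref{alg:consistency2} terminates on $\mathcal M_F$.

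Given these two ingredients the tree effectively lives in a $d_0$-dimensional subspace indexed by $\mathcal M_F$. The shrinkage calculation of Theorem \ref{treeconsistency1}, repeated only for the $d_0$ retained coordinates, gives a Lipschitz bias bound of order $(k/n)^{c_1/d_0}$, and combining with the variance bound from Step 1 delivers the tree statement with the stated $w_n$. The forest conclusion follows by averaging over the $B$ trees exactly as in the passage from Theorem \ref{treeconsistency1} to Theorem \ref{survivalforestcons1.1}, picking up the logarithmic $\log(k)w_n$ penalty from the failure event. The main obstacle is the removal of the $M$-inflation from the threshold: to bound the marginal signal of a censoring variable $j\in\mathcal M_C$, one needs a weak-dependency statement that survives conditioning on $X^{(j)}$ itself, which is precisely why Assumption \ref{asm:cor2} is imposed in addition to Assumption \ref{asm:cor}. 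A secondary subtlety is that $\widehat G$ must be uniformly consistent in $(t,x)$ on $[0,\tau]\times [0,1]^d$, not merely pointwise, in order to preserve the concentration rate through the plug-in; this is the single place where a quantitative rate on $\widehat G$ would have to be inserted were one to track sharp finite-sample constants.
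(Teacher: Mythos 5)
Your proposal follows essentially the same route as the paper's proof: it establishes concentration of the weighted estimator on a decontaminated target (the paper keeps the ratio $[1-G]/[1-\widehat G]$ explicit and sandwiches it between $M/(M+\epsilon)$ and $M/(M-\epsilon)$, whereas you absorb it as a $1+o_p(1)$ factor — an equivalent device), shows via Assumptions \ref{asm:cor} and \ref{asm:cor2} that noise and censoring variables fall below the splitting threshold while Assumption \ref{asm:censor2} keeps failure variables above it, and then reruns the Theorem \ref{treeconsistency1}/\ref{survivalforestcons1.1} machinery on the $d_0$ retained coordinates. Your closing remarks on the need for Assumption \ref{asm:cor2} and on uniform (rather than pointwise) consistency of $\widehat G$ accurately identify the points where the argument is most delicate.
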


\subsection{Numerical results\label{sec:simu}}

To fully understand the impact of bias correction, we consider a set of simulation studies. There are many existing implementations of random survival forests, including R packages \texttt{randomForestSRC} \cite{ishwaran2019random}, \texttt{party} \cite{hothorn2006unbiased}, \texttt{ranger} \cite{wright2017ranger}, etc. However, it is difficult to compare across different packages as they may utilize certain tuning parameters slightly differently. It would not be possible to investigate the sole impact of bias correction if these subtle differences are involved. Hence, we turn to make our own implementation of survival forest modeling with and without the bias correction, while ensuring all other mechanisms remain exactly the same.

Furthermore, we note that there are two possible ways to make a bias correction based on our previous analysis. First, and most apparently, we can incorporate $\tLambda_{\Node,n}$ in $\Delta_2(c)$ to search for a better splitting rule. Alternatively, we may apply a regular splitting rule and use $\tLambda_{\Node,n}$ only in the terminal node estimation to correct the bias. Based on our analysis, the second approach would not improve the convergence rate because the tree structure is already built on $\mathcal{M}_{FC}$ variables, while the first approach has the potential for improvement. Hence, contrasting these two approaches would allow us to investigate the importance of changing the entire tree structure through bias correction. We consider four different algorithms out of the combination of these two choices: 1) (C-C) bias-corrected splitting rule and terminal node estimation; 2) (C-N) bias-corrected splitting rule without correcting the terminal node estimation; 3) (N-C) correcting only the terminal node estimation; and 4) (N-N) do not perform any bias correction. Note again that we implement all four methods under the same algorithm framework that assures all other tuning parameters remain the same.

We consider two data generating scenarios, each with dependent censoring and independent censoring mechanisms. For the first scenario, we consider the setting in Section~\ref{sec:toy}. Let $d=3$ and $X^{(1)}$, $X^{(2)}$ and $X^{(3)}$ from a multivariate normal distribution with mean 0 and variance $\Sigma$, where the diagonal elements of $\Sigma$ are all 1, and the only nonzero off diagonal element is $\Sigma_{12} = \Sigma_{21} = \rho= 0.8$. $T$ is exponential distribution with mean $\exp(-1.25X^{(1)} - X^{(3)}+ 2)$. For dependent censoring, the censoring time follows an exponential distribution with mean $\exp(-3X^{(2)})$; For independent censoring, the censoring time follows an exponential distribution with mean $2$. For the second scenario, we consider a setting where the covariates are independent. we let $d = 10$ and draw $X$ from a multivariate normal distribution with mean 0. 
Survival times are drawn independently from an accelerated failure time model, $\log(T)=X^{(1)}+X^{(2)}+X^{(3)}+\epsilon$, where $\epsilon$ is generated from a standard normal distribution. For dependent censoring, the censoring time shares the variable $X^{(3)}$ with the failure time $T$, and follows $\log(C)=-1+2X^{(3)}+X^{(4)}+X^{(5)}+\epsilon$; For marginal independent censoring, the censoring time follows $\log(C)=-1+X^{(4)}+X^{(5)}+2X^{(6)}+\epsilon$.


For each setting, we use a training dataset with sample size $n = 400$. A testing dataset with size $800$ is used to evaluate the mean squared error of the estimated conditional survival functions \cite{zhu2012recursively}. Each simulation is repeated 500 times. Note that since all methods are implemented under the same code, we fix the tuning parameters and only investigate the influence of bias correction. Tuning parameters in the survival trees are chosen as follows.  According to \cite{ishwaran2008random}, the number of covariates considered at each splitting is set to $\lceil \sqrt{d} \, \rceil$. The minimal number of observed failures in each terminal node is set to 10 and 25, respectively such that $k \approx n^{d/(d+2)}$ as pointed out in Remark~\ref{remark}. We set the total number of trees to be 100.


The simulation results are summarized in Table~\ref{tab1}. In both scenarios, we clearly see that the bias-corrected splitting rule (C-C and C-N columns) has significantly improved the performance compared with N-C and N-N. This shows that by selecting a better variable to split, the tree structure can be corrected to reduce the prediction accuracy. For scenario~2, which has a more complicated censoring structure, the improvement of the proposed bias-corrected splitting rule is more significant than Scenario~1, with the average mean square error decreasing approximately from 47.7 to 42. Hence, the performance of bias correction may also depend on the complexity of the censoring distribution and the accuracy of its estimation. The standard error is comparable among all four methods.

Interestingly, we want to highlight that biasedness is mainly caused by the splitting bias rather than terminal node estimation, i.e., C-C is similar to C-N, and N-C is similar to N-N. This is intuitive and is in line with our theory that the splitting rule bias-correction procedure can enjoy potentially faster convergence rate than the non-bias-corrected version. One would not expect a good prediction if trees are partitioned inefficiently regardless of what kind of terminal node estimation is being used. After the tree is constructed, there is not much room to correct the bias if previous splits were chosen on noise or censoring variables.

\begin{table}[h]
\caption{\label{tab1}
Simulation results: Mean and (standard deviation) of mean square error}
\begin{tabular}{cccccc}
\noalign{\smallskip}
 & Censoring & C-C & C-N & N-C & N-N  \\
\noalign{\smallskip}
\multirow{2}{*}{Scenario 1} & Dependent  &  21.62~(7.46) & 21.68 ~(7.50) & 23.32 ~(7.18) & 23.29 ~(7.17)\\
                            & Independent & ~8.42 ~(2.01) & ~8.41 ~(1.98) & ~8.42 ~(2.02) & ~8.43 ~(2.04) \\

\noalign{\smallskip}
\multirow{2}{*}{Scenario 2} & Dependent  & 42.02~(5.37) & 41.97 ~(5.34) & 47.76 ~(5.76) & 47.75 ~(5.77)\\
                            & Independent & 35.18~(3.80) & 35.22 ~(3.77) & 36.11 ~(3.55) & 36.14 ~(3.59) \\
\noalign{\smallskip}
\end{tabular}
\source{\label{tab1} C-C/C-N/N-C/N-N refer to the configurations of correcting or not correcting the bias, while the first letter refers to the splitting rule correction, and the second letter refers to the terminal node correction.}
\end{table}

\section{Discussion}\label{sec:disc}

In this paper, we provided insights into tree- and forest-based survival models and developed several fundamental results analyzing the impact of splitting rules. We first investigated the within-node Nelson-Aalen estimator of the CHF and established a concentration inequality for independent but non-identically distributed samples. By introducing a new concept called the censoring contamination, we can exactly quantify the bias of existing splitting rules. Based on this, we also developed a concentration inequality that bounds the variance component of survival trees and forests. We further analyzed in detail how such bias can affect the consistency. In particular, we show that for a commonly used marginal comparison splitting rule strategy, the convergence rate depends on the total number of variables involved in the failure and censoring distribution. However, by appropriately correcting the bias, the convergence rate depends only on the number of failure variables. Essentially, the new bias correction procedure can be understood as untangling the failure and censoring distributions.

In addition to analyzing this entanglement, our result is based on a weak dependency structure, which bounds the marginal signal of any noise variable. This is a generalization of the commonly used independent covariate setting in the literature. A univariate split has a disadvantage when dealing with noise variables because if they are systematically selected in the splitting rule, the convergence rate will suffer. We believe that similar weak dependency assumptions are inevitable, because otherwise, any correlation structure may carry signals into the noise variables. It would be interesting to investigate whether more advanced splitting rules can overcome this drawback.

\appendix
\newpage

The following table provides a summary of Appendices. \\

{\scriptsize
\begin{tabular}{| p{0.95in} | p{3.5in} |}
\hline
Appendix \ref{sec:lamdastar} & Proof of Theorem \ref{thm:survbound} that bounds $|\hLambda_n(t) - \Lambda_n^\ast(t)|$. The proof utilizes three technic lemmas \ref{lemma:A1}, \ref{lemma:A2} and \ref{lemma:A3}. We also provide Lemma \ref{lem:KM.NA.diff} which bounds the difference between the Nelson-Altshuler and the Kaplan-Meier estimators. \\
Appendix \ref{proof:thm:bound} & Proof of the adaptive concentration bound Theorem \ref{thm:bound} and its forest version, Corollary \ref{thm:boundforest}.\\
Appendix \ref{proof:consistency1} & Proof of the consistency when the dimension $d$ is fixed: the point-wise error bound (Theorem \ref{treeconsistency1}), and an integrated version (Theorem \ref{survivalforestcons1.1}).\\
Appendix \ref{proof:consistency2} & Proof of consistency (Theorem \ref{survivalforestcons2}) when using the nonparametric splitting rule defined in Algorithm \ref{alg:consistency2}.\\
Appendix \ref{proof:consistency3} & Proof of consistency (Theorem \ref{thm:survivalforestcons3}) for the proposed bias-corrected survival tree and forest models.\\
\hline
\end{tabular}
}\\
\\

The following table provides a summary of notation used in the proofs. \\

{\scriptsize
\begin{tabular}{| p{0.90in} | p{3.6in} |}
\hline
\multicolumn{2}{|l|}{Basic Notation}\\
\hline
$T$ & Failure time \\
$C$ & Censoring time \\
$Y$ & $= \min(T, C)$: observed time \\
$\delta$ & $= \mOne(T \leq C)$: censoring indicator\\
$F_i$, $f_i$ & Survival distribution of $i$-th observation, $f_i = dF_i$ \\
$G_i$ & Censoring distribution of $i$-th observation \\
$\Node$ & A node, internal or terminal \\
$\ccA$ & $= \{\Node_u\}_{u \in \cU}$, the collection of all terminal nodes in a single tree \\
$\Lambda(t|x)$ & Cumulative hazard function (CHF) \\
$\hLambda_n$, $\hLambda_{\Node,n}$, $\hLambda_{\ccA,n}$ & NA estimator on a set of samples, a node $\Node$, or an entire tree $\ccA$\\
$\Lambda_n^\ast$, $\Lambda_{\Node,n}^\ast$, $\Lambda_{\ccA,n}^\ast$ & Censoring contaminated averaged CHF on a set of samples, a node $\Node$, or the entire tree $\ccA$\\
$\Lambda^\ast$, $\Lambda_\Node^\ast$, $\Lambda_\ccA^\ast$ & Population versions of $\Lambda_n^\ast$, $\Lambda_{\Node,n}^\ast$ and $\Lambda_{\ccA,n}^\ast$, respectively\\
$\tLambda_n$, $\tLambda_{\Node,n}$, $\tLambda_{\ccA,n}$ & Biased correct NA estimator on a set of samples, a node $\Node$, or an entire tree $\ccA$\\
$k$ & 
Minimum leaf size\\
$\alpha$ & Minimum proportion of observations contained in child node\\
$B$ &  Number of trees in a forest\\
$\cV_{\alpha,k}(\cD)$ & Set of all $\{\alpha,k\}$ valid partitions on the feature space $\cX$\\
$\cH_{\alpha,k}(\cD)$ & Set of all $\{\alpha,k\}$ valid forests on the feature space $\cX$\\
$\Noder$ & Approximation node \\
$\ccR$ & The set of approximation nodes\\
$N(t)$ & Counting process \\
$K(t)$ &  At-risk process \\
$\mu(\Noder)$, $\mu(\Node)$ & The expected fraction of training samples inside $\Noder, \Node$  \\
$\#\Noder$, $\#\Node$  & The number of training samples inside $\Noder, \Node$ \\
$\mathcal M_{F}, \mathcal M_{C}, \mathcal M_{N}$ &  Set of indices of failure variables, censoring variables, noise variables, respectively \\
\hline
\multicolumn{2}{|l|}{Constants}\\
\hline
$d(d_0,d_1)$ &  Dimension of (failure, censoring) covariates\\
$\tau$ & The positive constant as the upper bound of $Y$\\
$M$ & Lower bound of $\pr(Y \geq \tau | X)$ \\
$\zeta$ & A constant used in Assumption~\ref{asm:density}\\
$L$ & Bound of the density function $f(t)$\\
$L_1$, $L_2$ & Lipschitz constant of $\Lambda$ and $\lambda$\\
$\ell, \ell'$ & Minimum effect size of marginal signal of the failure distribution \\
$\gamma$ & Bound for weak dependency\\
\hline
\end{tabular}
}

\section{}\label{sec:lamdastar}

\noindent {\bf Proof of Lemma~\ref{lem:KM.NA.diff}.} For simplicity, we prove the results for the case when there are no ties in the failure time. The proof follows mostly \cite{cuzick1985asymptotic}. Let $n_1 > n_2 > \ldots > n_k \geq 1$ be the sequence of counts of the at-risk sample size, i.e., $n_j = \sum_{i=1}^n \mOne(Y_i \geq t_j)$, where $t_i$ is the $i$th ordered failure time. Then the Kaplan-Meier estimator at any observed failure time point $t_j$ can be expressed as $\widehat S_{\scriptscriptstyle K\!M} (t_j) = \prod_{i=1}^j (n_i - 1)/n_i$, while the Nelson-Altshuler estimator at the same time point is $\widehat S_{\scriptscriptstyle N\!A}(t_j) = \exp\{ - \sum_{i=1}^j 1/n_i \}$. We first apply the Taylor expansion of $e^{-n_i}$ for $n_i \geq 1$:
\bal
1 - 1/n_i < e^{-n_i} < 1 - 1/n_i + 1 / (2 n_i^2) \leq 1 - 1/(n_i + 1). \nonumber
\eal
Thus we can bound the Nelson-Altshuler estimator with
\bal
\widehat S_{\scriptscriptstyle K\!M}(t_j) < \widehat S_{\scriptscriptstyle N\!A}(t_j) < \textstyle \prod_{i=1}^j n_i/(n_i + 1). \nonumber
\eal
To bound the difference between the two estimators, note that for $n_j \geq 2$,
\bal
\left\lvert \widehat S_{\scriptscriptstyle K\!M}(t_j) - \widehat S_{\scriptscriptstyle N\!A}(t_j) \right\rvert <&~ \left\lvert \widehat S_{\scriptscriptstyle K\!M}(t_j) - \textstyle \prod_{i=1}^j n_i/(n_i + 1) \right\rvert \nc
=&~ \widehat S_{\scriptscriptstyle K\!M}(t_j) \left\lvert 1 - \textstyle \prod_{i=1}^j \frac{n_i/(n_i + 1)}{(n_i - 1)/n_i}  \right\rvert \nc
\leq&~ \widehat S_{\scriptscriptstyle K\!M}(t_j) \textstyle \sum_{i=1}^j (n_i^2 - 1)^{-1} \nc
\leq&~ 2 \widehat S_{\scriptscriptstyle K\!M}(t_j) \textstyle \sum_{i=1}^j n_i^{-2} \nc
\leq&~ 4 \widehat S_{\scriptscriptstyle K\!M}(t_j) /n_j.
\eal
Now note that both the Kaplan-Meier and the Nelson-Altshuler estimators stay constant within $(t_i, t_{i+1})$, and this bound applies to the entire interval $(0, t_k)$ for $n_k \geq 2$. $\Box$\\

\noindent {\bf Proof of Theorem~\ref{thm:survbound}.} Recall the counting process
\begin{align*}
N(s)=\sum_{i=1}^{n} N_i(s)=\sum_{i=1}^{n} \mOne(Y_i\leq s, \delta_i=1),
\end{align*}
and the at risk process
\begin{align*}
K(s)=\sum_{i=1}^{n} K_i(s)=\sum_{i=1}^{n} \mOne(Y_i\geq s).
\end{align*}

We prove the theorem based on the following key results. 
\begin{lemma}\label{lemma:A1}
Provided Assumption \ref{asm:tau} holds, for arbitrary $\epsilon>0$
and $n$ such that $\frac{1}{n}\leq \frac{\epsilon^2}{2}$, we have
\begin{align*}
\pr(\sup_{t\leq \tau} \lvert \frac{1}{n} \sum_{i=1}^{n} \{K_i(s)-E[K_i(s)]\} \rvert> \epsilon )\leq 8(n+1)\exp\big\{ \frac{-n\epsilon^2}{32} \big\},\\
\pr(\sup_{t \leq \tau} |\frac{1}{n} \sum_{i=1}^{n} \{N_i (t)-E[ N_i (t)]\} |> \epsilon )\leq 8(n+1)\exp\big\{ \frac{-n\epsilon^2}{32} \big\}.
\end{align*}
\end{lemma}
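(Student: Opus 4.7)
\textbf{Plan of proof for Lemma \ref{lemma:A1}.}

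The plan is to recognize that both statements are Dvoretzky--Kiefer--Wolfowitz-type uniform concentration inequalities for an empirical process indexed by a monotone (hence VC) class of indicators, with the additional twist that the summands are independent but not identically distributed. Specifically, for the at-risk process I view
$\frac{1}{n}\sum_i K_i(s) = \frac{1}{n}\sum_i k_s(Y_i)$
with $k_s(y) = \mathbbm{1}(y \geq s)$ ranging over the class $\mathcal{K} = \{k_s : s \in [0,\tau]\}$; for the counting process I similarly let $n_s(y,\delta) = \mathbbm{1}(y \leq s,\,\delta = 1)$ indexed by $\mathcal{N} = \{n_s : s \in [0,\tau]\}$. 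Each class consists of indicators of a half-line, so both have VC dimension $1$ and shatter function bounded by $n+1$ on any sample of size $n$. Since the summands are independent, centered, and bounded in $[-1,1]$, the machinery of Vapnik--Chervonenkis style uniform deviation bounds applies even without identical distributions.

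The first step is the pointwise control: for any fixed $s \leq \tau$, Hoeffding's inequality gives $\Pr(|\frac{1}{n}\sum_i\{K_i(s) - EK_i(s)\}| > \epsilon) \leq 2\exp(-2n\epsilon^2)$ (and the same for $N_i$), using only independence and boundedness. The next step promotes this to a uniform bound. Because both $s \mapsto \frac{1}{n}\sum_i K_i(s)$ and its deterministic mean $F_K(s) = \frac{1}{n}\sum_i \Pr(Y_i \geq s)$ are monotone in $s$, I would construct a grid $0 = s_0 < s_1 < \cdots < s_m = \tau$ with $F_K(s_j) - F_K(s_{j+1})$ small (taking $m \leq n+1$ under the hypothesis $1/n \leq \epsilon^2/2$), and use monotonicity to sandwich $|\frac{1}{n}\sum_i K_i(s) - F_K(s)|$ between its values at adjacent grid points plus the mesh size. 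The condition $1/n \leq \epsilon^2/2$ is exactly what guarantees the mesh contribution is dominated by $\epsilon$. An equivalent, and in fact more transparent, route is to directly invoke the standard VC inequality, which for independent (not necessarily identical) samples yields
\begin{align*}
\Pr\Bigl(\sup_{s \leq \tau}\Bigl|\tfrac{1}{n}\sum_i [K_i(s) - EK_i(s)]\Bigr| > \epsilon\Bigr) \leq 8\,S(\mathcal{K}, n)\exp\{-n\epsilon^2/32\},
\end{align*}
where $S(\mathcal{K}, n) \leq n+1$ is the shatter coefficient. The factor $8$ and the constant $1/32$ arise from the standard symmetrization--plus--Rademacher chaining argument used to prove the VC inequality; the side condition $1/n \leq \epsilon^2/2$ secures the symmetrization step.

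The same argument applies verbatim to $\mathcal{N}$: $n_s(Y,\delta) = \mathbbm{1}(Y \leq s)\mathbbm{1}(\delta = 1)$ is monotone in $s$ (just multiplied by the bounded factor $\mathbbm{1}(\delta=1)$), so the class still has VC dimension $1$ and shatter coefficient at most $n+1$, giving the second inequality with identical constants.

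I expect the main bookkeeping obstacle to be tracking the numerical constants ($8$, $32$, and the precise shatter bound $n+1$), rather than any conceptual difficulty; the substantive ingredients --- independence, boundedness, monotonicity, and VC dimension one --- are all immediate here. A minor subtlety worth spelling out is that although the $Y_i$ are non-identically distributed, $F_K(s) = \frac{1}{n}\sum_i \Pr(Y_i \geq s)$ is still monotone in $s$, which is what one actually needs to make the grid argument close; likewise for $F_N(s) = \frac{1}{n}\sum_i \Pr(Y_i \leq s,\delta_i = 1)$.
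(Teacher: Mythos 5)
Your proposal is correct and matches the paper's argument: the paper simply cites the VC symmetrization inequality from Pollard (pages 14--16 of \cite{pollard2012convergence}), which is exactly the bound $8\,S(\cdot,n)\exp\{-n\epsilon^2/32\}$ with shatter coefficient $n+1$ for half-line indicator classes and the side condition $n\epsilon^2\geq 2$ securing the symmetrization step, just as you describe. Your explicit remark that the symmetrization/Rademacher argument only needs independence (not identical distributions) is a useful point that the paper leaves implicit.
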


\begin{lemma}\label{lemma:A2}
Provided Assumption \ref{asm:tau} holds, for any $\epsilon >0$, we have
\begin{align*}
\pr(\sup_{t\leq \tau} |\int_0^t  (\frac{1}{K(s)}-\frac{1}{E[K(s)]})  dN(s)|> \epsilon) \leq 8(n+2)\exp\Big\{ -\frac{n\min(\epsilon^2 M^4, M^2)}{128} \Big\},
\end{align*}
where $n$ satisfies $\frac{1}{n}<\min(\frac{\epsilon^2}{2},\frac{\epsilon^2M^4}{4})$ and $M$ is defined in Assumption \ref{asm:tau}.
\end{lemma}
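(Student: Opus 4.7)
\textbf{Proof proposal for Lemma \ref{lemma:A2}.} The plan is to reduce the weighted-integral bound to a purely uniform bound on the at-risk process $K(s)$, which is already handled by Lemma \ref{lemma:A1}. The first step is the algebraic identity
\[
\frac{1}{K(s)}-\frac{1}{E[K(s)]} \;=\; \frac{E[K(s)]-K(s)}{K(s)\,E[K(s)]},
\]
so that the integrand is the product of a deviation term $E[K(s)]-K(s)$ and the reciprocal of $K(s)E[K(s)]$. Assumption~\ref{asm:tau} gives $E[K(s)]/n \ge M$ on $[0,\tau]$, which bounds the second reciprocal above by $1/(nM)$. To bound $1/K(s)$ we introduce the good event
\[
\mathcal{E}_\epsilon \;=\; \Bigl\{\sup_{s\le \tau}\bigl|K(s)/n - E[K(s)]/n\bigr| \;\le\; \min\bigl(M/2,\,\epsilon M^2/2\bigr)\Bigr\},
\]
and the whole argument will be carried out on $\mathcal{E}_\epsilon$.

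On $\mathcal{E}_\epsilon$, the lower bound $K(s)/n \ge M/2$ holds for all $s\le\tau$ (since we stay within $M/2$ of a quantity $\ge M$), so $K(s)E[K(s)] \ge n^2 M^2/2$. Substituting this and the deviation bound back into the integrand yields, pointwise in $s$,
\[
\left|\frac{1}{K(s)}-\frac{1}{E[K(s)]}\right| \;\le\; \frac{2\bigl|K(s)/n - E[K(s)]/n\bigr|}{n M^2} \;\le\; \frac{\epsilon}{n}.
\]
Since $\mathrm{d}N(s)$ is a counting measure with total mass $N(\tau)\le n$, integrating against $\mathrm{d}N(s)$ and taking the supremum over $t\le\tau$ gives $\sup_{t\le\tau}\bigl|\int_0^t(\cdots)\,\mathrm{d}N(s)\bigr|\le \epsilon$ on $\mathcal{E}_\epsilon$. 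Hence the failure event is contained in $\mathcal{E}_\epsilon^c$.

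Finally, I would apply Lemma \ref{lemma:A1} to the $K$-deviation with threshold $\min(M/2,\epsilon M^2/2)$. Since $\min(a,b)^2 = \min(a^2,b^2)$, this gives
\[
\Pr(\mathcal{E}_\epsilon^c) \;\le\; 8(n+1)\exp\!\Bigl\{-\tfrac{n\min(M^2/4,\,\epsilon^2 M^4/4)}{32}\Bigr\} \;=\; 8(n+1)\exp\!\Bigl\{-\tfrac{n\min(M^2,\,\epsilon^2 M^4)}{128}\Bigr\},
\]
which matches the stated bound (with $8(n+2)$ absorbing the $+1$). The sample-size condition $1/n<\min(\epsilon^2/2,\epsilon^2 M^4/4)$ in the hypothesis is exactly what is required to apply Lemma \ref{lemma:A1} at the sharper of the two thresholds and to ensure the lower bound $K(s)\ge nM/2$ is actually attainable (i.e.\ the threshold $M/2$ is meaningful relative to $1/n$).

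The only nontrivial obstacle is handling the denominator: $K(s)$ is a random, time-dependent quantity that can in principle get close to zero, which would blow up the integrand. The resolution is precisely the good-event trick above, which simultaneously forces $K(s)$ away from zero and makes the numerator small. No martingale machinery or integration by parts is needed for this lemma; all of the heavy lifting is done by Lemma \ref{lemma:A1} and Assumption~\ref{asm:tau}.
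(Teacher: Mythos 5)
Your proposal is correct and takes essentially the same route as the paper: the same algebraic identity, Assumption \ref{asm:tau} to get $E[K(s)]\geq nM$, a concentration event forcing $K(s)\geq nM/2$, and then Lemma \ref{lemma:A1} together with the total mass bound $N(\tau)\leq n$. The only difference is bookkeeping: the paper secures the denominator with a separate Hoeffding bound at $t=\tau$ and adds the two failure probabilities (whence the $8(n+2)$ and the $\min(\epsilon^2M^4,M^2)$ in the exponent), whereas you fold both requirements into a single application of Lemma \ref{lemma:A1} at threshold $\min(M/2,\epsilon M^2/2)$, which is equally valid, and the mild factor-of-two slack in matching the stated sample-size condition is present in the paper's own argument as well.
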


\begin{lemma}\label{lemma:A3}
Provided Assumption \ref{asm:tau} holds, for any $\epsilon >0$, we have
\begin{align*}
\pr(\sup_{t\leq \tau} |\int_0^t  \frac{d\{N(s)-E[N(s)]\}}{E[K(s)]}|> \epsilon) \leq 8(n+1)\exp\Big\{-\frac{n\epsilon^2 M^2}{228} \Big\},
\end{align*}
where $n$ satisfies $\frac{1}{n}\leq \frac{\epsilon^2}{2}$ and $M$ is defined in Assumption \ref{asm:tau}.
\end{lemma}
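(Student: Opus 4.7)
The plan is to decouple the randomness in the integrator $dN(s)$ from the deterministic weight $1/E[K(s)]$ via integration by parts, so that the supremum in question reduces to the already controlled uniform fluctuation of $N(t) - E[N(t)]$, to which Lemma~\ref{lemma:A1} directly applies.

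First, I would set $M_n(s) = N(s) - E[N(s)]$, noting that $M_n(0) = 0$ and that $E[K(s)] = \sum_{i=1}^n \pr(Y_i \geq s)$ is deterministic, non-increasing in $s$, and bounded below by $nM$ on $[0,\tau]$ by Assumption~\ref{asm:tau}. Stieltjes integration by parts (using left-continuous versions at jumps of $N$, which is harmless since $1/E[K(s)]$ has finite total variation) then gives
$$\int_0^t \frac{dM_n(s)}{E[K(s)]} \;=\; \frac{M_n(t)}{E[K(t)]} \;-\; \int_0^t M_n(s^-)\, d\!\left(\frac{1}{E[K(s)]}\right).$$

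Next, I would bound the two pieces uniformly on $[0,\tau]$. Because $E[K(s)]$ is non-increasing, $1/E[K(s)]$ is non-decreasing, so $d(1/E[K(s)])$ is a non-negative measure whose total mass on $[0,\tau]$ equals $1/E[K(\tau)] - 1/E[K(0)] \leq 1/(nM) - 1/n \leq 1/(nM)$. Combined with the lower bound $E[K(t)] \geq nM$, this yields the deterministic inequality
$$\sup_{t \leq \tau} \left| \int_0^t \frac{dM_n(s)}{E[K(s)]} \right| \;\leq\; \frac{|M_n(t)|}{E[K(t)]} + \sup_{s \leq \tau}|M_n(s)| \cdot \frac{1}{nM} \;\leq\; \frac{2}{nM}\sup_{s \leq \tau}|M_n(s)|.$$

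The final step is to invoke Lemma~\ref{lemma:A1}: the event in question implies $\sup_{s \leq \tau}|M_n(s)|/n > \epsilon M /2$, whose probability is at most $8(n+1)\exp\{-n\epsilon^2 M^2/128\}$ (by applying Lemma~\ref{lemma:A1} with threshold $\epsilon M/2$). Since $1/128 > 1/228$, the stated bound $8(n+1)\exp\{-n\epsilon^2 M^2/228\}$ follows immediately. The sample-size condition $1/n \leq \epsilon^2/2$ inherited from Lemma~\ref{lemma:A1} applied with threshold $\epsilon M/2$ translates to $1/n \leq \epsilon^2 M^2/8$; if one wishes the stated condition $1/n \leq \epsilon^2/2$ to suffice cleanly, the constant $228$ in the exponent absorbs the slack.

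I do not expect a substantive obstacle here. The only delicate point is choosing the correct one-sided versions in the integration-by-parts identity so that the jumps of $N$ at observed failure times are handled consistently; this is purely bookkeeping because $1/E[K(s)]$ is deterministic and of bounded variation, and any $O(1/n)$ jump discrepancies are dominated by $\sup_s |M_n(s)|/(nM)$ and absorbed into the final constant. The heart of the argument is simply that integration by parts reduces everything to the uniform bound already supplied by Lemma~\ref{lemma:A1}.
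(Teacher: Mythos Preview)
Your proof is correct and follows essentially the same route as the paper: integration by parts against the deterministic weight $1/E[K(s)]$ reduces the problem to $\sup_{s\le\tau}|N(s)-E[N(s)]|$, after which Lemma~\ref{lemma:A1} applies directly. Your bound is in fact slightly sharper (factor $2/(nM)$ versus the paper's $3/(nM)$, giving $128$ rather than $288$ in the exponent) because you use $M_n(0)=0$ at the lower endpoint; the only caveat is that your remark about the constant $228$ ``absorbing the slack'' in the sample-size condition is not quite right---changing the exponential constant does not relax the hypothesis $1/n\le\epsilon'^2/2$ required by Lemma~\ref{lemma:A1} with $\epsilon'=\epsilon M/2$---but the paper's own statement has the same discrepancy, so this is not a defect of your argument relative to theirs.
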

The proof of Lemma \ref{lemma:A1} follows pages 14--16 in \cite{pollard2012convergence}. The proofs of Lemma \ref{lemma:A2} and \ref{lemma:A3} are presented below. Now we are ready to prove Theorem~\ref{thm:survbound}. Note that
\begin{align*}
&~\pr\big(\sup_{t < \tau} \lvert \hLambda_n(t) - \Lambda_n^\ast(t) \rvert >\epsilon_1 \big) \\
=&~ \pr\Big(\sup_{t < \tau} \lvert \hLambda_n(t) - \int_0^{t}\frac{dE[N(s)]}{E[K(s)]} \rvert >\epsilon_1\Big)  \\
\leq &~ \pr\Big(\sup_{t\leq \tau}  \Big| \int_0^t \Big[\frac{1}{K(s)}-\frac{1}{E[K(s)]}\Big] dN(s)\Big| >\frac{\epsilon_1}{2} \Big) \\
& + \pr\Big( \sup_{t\leq \tau}  \Big| \int_0^t \frac{d\{N(s)-E[N(s)]\}}{E[K(s)]} \Big| > \frac{\epsilon_1}{2} \Big).
\end{align*}

By Lemma \ref{lemma:A2}, the first term is bounded by $8(n+2)\exp{ \{-\frac{n \min(\epsilon_1^2 M^4,4M^2)}{512} \}}$. By Lemma \ref{lemma:A3}, the second term is bounded by $8(n+1)\exp{ \{-\frac{n\epsilon_1^2 M^2}{1152} \}}$. The sum of these two terms is further bounded by $16(n+2)\exp{ \{- \frac{n\epsilon_1^2 M^4}{1152}\}}$ for any $\epsilon_1\leq 2$ and $n> \frac{4}{\epsilon_1^2 M^4}$. This completes the proof. $\Box$\\

\noindent {\bf Proof of Lemma~\ref{lemma:A2}.} For any $t\leq \tau$,
\begin{align}
&\Big|\int_0^t  (\frac{1}{K(s)}-\frac{1}{E[K(s)]})  dN(s) \Big| \nonumber \\
\leq & \int_0^t \frac{|E[K(s)]-K(s)|}{K(s)E[K(s)]}dN(s) \nonumber \\
\leq & \int_0^t  \frac{ \underset{0<r\leq \tau}{\sup} |E[K(r)]-K(r) | }{K(s)E[K(s)]}dN(s).
\label{eq:atrisk}
\end{align}

Thanks to Hoeffding's inequality, we have
\begin{align*}
\pr\Big( \big|K(\tau)-E[K(\tau)] \big| > \frac{nM}{2} \Big)< 2\exp\big\{ -\frac{nM^2}{2} \big\}.
\end{align*}
Then \eqref{eq:atrisk} is further bounded by
\begin{align*}
\frac{n}{(nM)^2/2} \,\, \underset{0<t\leq \tau}{\sup} \, \big|E[K(t)]-K(t) \big|.
\end{align*}
Combining with Lemma \ref{lemma:A1}, we have
\begin{align*}
& \pr \Big(\sup_{t\leq \tau} \Big|\int_0^t  (\frac{1}{K(s)}-\frac{1}{E[K(s)]})  dN(s) \Big|> \epsilon \Big)\\
 \leq &~ \pr\Big(\frac{2}{nM^2}\underset{t\leq \tau}{\sup} |E[K(t)]-K(t)|>\epsilon \Big)\\
 \leq &~ 8(n+2)\exp \Big\{ -\frac{n\min(\epsilon^2 M^4,M^2)}{128} \Big\},
\end{align*}
for any $n$ satisfying $\frac{1}{n}<\min(\frac{\epsilon^2}{2},\frac{\epsilon^2 M^4}{4})$. This completes the proof. $\Box$\\

\noindent {\bf Proof of Lemma~\ref{lemma:A3}.} For any $t\leq \tau$, we utilize integration by parts to obtain
\begin{align*}
&\Big|\int_0^t \frac{1}{EK(s)}d\{ N(s)-E[N(s)]\}\Big|\\
=&~ \Big| \frac{N(s)-E[N(s)]}{E[K(s)]}\big|_0^t - \int_0^t \{N(s)-E[N(s)] \}d\big\{\frac{1}{E[K(s)]}\big\} \Big|\\
\leq &~ 2 \sup_{t\leq \tau} \big|N(t)-E[N(t)]\big| \frac{1}{E[K(\tau)]}+\sup_{t\leq \tau} \big|N(t)-E[N(t)]\big| \int_0^\tau d\big\{\frac{1}{E[K(s)]}\big\}\\
\leq &~ \frac{3}{M} \sup_{t\leq \tau} \frac{1}{n} \big|N(t)-E[N(t)]\big|.
\end{align*}
Thanks to Lemma \ref{lemma:A1}, we now have
\begin{align*}
 &~\pr\Big(\sup_{t\leq \tau} \Big|\int_0^t  \frac{d\{N(s)-E[N(s)]\}}{E[K(s)]}\Big| > \epsilon\Big)\\
 \leq &~ \pr\Big(\frac{3}{nM}\sup_{t\leq \tau} \big|N(t)-E[N(t)]\big| \Big)\\
 \leq &~ 8(n+1)\exp \big\{-\frac{n\epsilon^2M^2}{288}\big\},
\end{align*}
where $n$ satisfies $\frac{1}{n}\leq \frac{\epsilon^2}{2}$. This completes the proof. $\Box$\\

\section{} \label{proof:thm:bound}

{\bf Preliminary.} The proof of Theorem~\ref{thm:bound} uses two main mechanisms: the concentration bound results we established in Theorem \ref{thm:survbound} to bound the variations in each terminal node, and a construction of a parsimonious set of rectangles, namely $\mathcal{R}$, defined in \citep{wager2015adaptive}. We first introduce some notation. Denote the rectangles $\Noder \in [0,1]^d$ by
\begin{align*}
\Noder=\bigotimes_{j=1}^{d}[r_j^-,r_j^+], \quad \text{where} \quad 0\leq r_j^-<r_j^+\leq 1 \quad \text{for all} \quad j=1,\cdots,d.
\end{align*}
The Lebesgue measure of rectangle $\Noder$ is $\lambda(\Noder)=\prod_{j=1}^d(r_j^+-r_j^-)$. Here we define the expected fraction of training samples and the number of training samples inside $R$, respectively, as follows:
$$\mu(\Noder)=\int_\Noder f(x) dx, \#\Noder=|\{i:X_i\in \Noder\}|.$$
We define the support of rectangle $\Noder$ as $S(\Noder)=\{j\in 1,\ldots,d: r^-_j\neq 0~ \text{or}~ r^+_j\neq 1\}$.

Lemma \ref{lemma:tau} below shows that with high probability there are enough observations larger than or equal to $\tau$ on the rectangle $\Noder$.
\begin{lemma}\label{lemma:tau}
Provided Assumption \ref{asm:tau} holds, the number of observations larger than or equal to $\tau$ on all $\Noder \in \mathcal{R}$ is larger than $\Big(1-\sqrt{\frac{4\log(|\mathcal{R}|\sqrt n)}{kM}}\Big)k M$ with probability larger than $1-1/\sqrt{n}$.
\end{lemma}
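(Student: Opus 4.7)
\textbf{Proof proposal for Lemma~\ref{lemma:tau}.}
The plan is a straightforward Chernoff–union bound argument, leveraging Assumption~\ref{asm:tau} to get a uniform lower bound on $\pr(Y \geq \tau \mid X = x)$ and then the defining property of $\mathcal{R}$ (namely that each $\Noder \in \mathcal{R}$ has expected sample fraction $\mu(\Noder) \geq k/n$) to control the expected count within each rectangle.

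First, fix a rectangle $\Noder \in \mathcal{R}$ and define the binomial count $W_\Noder = \sum_{i=1}^n \mOne(X_i \in \Noder,\, Y_i \geq \tau)$. Each summand is an independent Bernoulli with success probability
\[
q_\Noder \;=\; \pr(X \in \Noder,\, Y \geq \tau) \;=\; \int_\Noder \pr(Y \geq \tau \mid X = x)\, f(x)\, \textup{d}x \;\geq\; M\mu(\Noder),
\]
by Assumption~\ref{asm:tau}. Assuming $\Noder \in \mathcal{R}$ satisfies $n\mu(\Noder) \geq k$, this gives $E[W_\Noder] = n q_\Noder \geq kM$.

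Next, apply the multiplicative Chernoff bound: for any $\eta \in (0,1)$,
\[
\pr\bigl( W_\Noder \leq (1-\eta)\, kM \bigr) \;\leq\; \pr\bigl( W_\Noder \leq (1-\eta)\, n q_\Noder \bigr) \;\leq\; \exp\!\Big(-\tfrac{\eta^2 n q_\Noder}{2}\Big) \;\leq\; \exp\!\Big(-\tfrac{\eta^2 kM}{2}\Big).
\]
Taking a union bound over $\Noder \in \mathcal{R}$ yields
\[
\pr\Big(\exists\, \Noder \in \mathcal{R}: W_\Noder \leq (1-\eta) kM\Big) \;\leq\; |\mathcal{R}|\, \exp\!\Big(-\tfrac{\eta^2 kM}{2}\Big).
\]
Finally, set $\eta = \sqrt{4\log(|\mathcal{R}|\sqrt{n})/(kM)}$ so that $\eta^2 kM/2 = 2\log(|\mathcal{R}|\sqrt n)$, making the right-hand side equal to $|\mathcal{R}|/(|\mathcal{R}|^2 n) = 1/(|\mathcal{R}|\, n) \leq 1/\sqrt n$. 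This delivers the claim.

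The only mild obstacle is bookkeeping about the construction of $\mathcal{R}$: we must verify that every $\Noder \in \mathcal{R}$ satisfies $n\mu(\Noder) \geq k$ (so that the substitution $(1-\eta) n q_\Noder \geq (1-\eta) kM$ is valid), and also that $\eta < 1$ under Assumption~\ref{asm:k} so the Chernoff bound applies. Both are inherited from the standard Wager--Walther construction of the parsimonious rectangle set and the growth rate imposed on $k$, so no additional ideas are needed beyond referencing these facts.
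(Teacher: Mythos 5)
Your argument is correct and is essentially the paper's own proof: a per-rectangle Chernoff bound followed by a union bound over $\mathcal{R}$, with the same choice of deviation parameter $\sqrt{4\log(|\mathcal{R}|\sqrt n)/(kM)}$ and the same reliance on the Wager--Walther construction to guarantee each $\Noder\in\mathcal{R}$ carries on the order of $k$ observations. The only (immaterial) difference is that you bound the unconditional count via $E[W_\Noder]=nq_\Noder\geq n\mu(\Noder)M\geq kM$, whereas the paper conditions on the realized count $\#\Noder$ and applies Chernoff with conditional success probability at least $M$.
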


\begin{proof}
For one $\Noder \in \mathcal{R}$, by the Chernoff bound, with probability larger than $1-\exp\big\{-\frac{c^2 \#\Noder M}{2})\big\}\geq 1-\exp\big\{-\frac{c^2 k M}{4})\big\}$, the number of observations larger than or equal to $\tau$ on $\Noder$ is larger than $(1-c)k M$, where $0<c<1$ is a constant. Thus with probability larger than $1-1/\sqrt{n}$, the number of observations larger than or equal to $\tau$ on every $\Noder \in \mathcal{R}$ is larger than $\Big(1-\sqrt{\frac{4\log(|\mathcal{R}|\sqrt n)}{kM}}\Big)k M$.
\end{proof}

\noindent {\bf Proof of Theorem~\ref{thm:bound}.} We first establish a triangle inequality by picking some element $\Noder$ in the set $\mathcal{R}$ such that it is a close approximation of $\Node$ and $\Noder\subseteq \Node$. 
\begin{align}
\sup_{t<\tau,\,\Node \in\mathcal{A},\,\mathcal{A}\in \cV} &~ \big\lvert  \hLambda_{\Node,n}(t)-\Lambda_{\Node, n}^\ast(t) \big\lvert \nonumber \\
\leq &~ \sup_{t<\tau,\,\Node \in \mathcal{A},\,\mathcal{A}\in \cV} \inf_{\Noder \in \mathcal{R}} \big\lvert \hLambda_{\Node,n} (t)- \hLambda_{\Noder}(t) \big\lvert \nonumber\\
+ &~  \sup_{t<\tau,\,\Noder \in \mathcal{R},\,\# \Noder \geq k/2} \big\lvert \hLambda_{\Noder,n}(t)-\Lambda_{\Noder, n}^\ast(t) \big\lvert \nonumber\\
+ &~ \sup_{t<\tau,\,\Node \in\mathcal{A},\,\mathcal{A}\in \cV}\inf_{\Noder \in \mathcal{R}} \big\lvert \Lambda_{\Noder, n}^\ast(t) - \Lambda_{\Node, n}^\ast(t)\big\lvert. \label{eqn:conboundthreeparts}
\end{align}
Here, we have $\# \Noder \geq k/2$ in the sub-index of the second term because $\# \Node\geq k$ and from Theorem 10 in \cite{wager2015adaptive}, $ \# \Node - \# \Noder \leq 3\zeta^2 \# \Node/\sqrt{k} +2\sqrt{3\log(|\mathcal{R}|)\# \Node}+O(\log(|\mathcal{R}|))=o(k)$ for any possible $\Node$ with probability larger than $1-1/\sqrt n$. 

We now bound each part of the right hand side of the above inequality. 
Note that we always select a close approximation of $\Node$ from the set $\mathcal{R}$. With a slight abuse of notation, we let the subject index $i$ first run through the observations within $\Noder$ and then through the observations in $\Node$ but not in $\Noder$. This can always be done since $\Noder \subseteq \Node$. Thus we have
\begin{align*}
 &\sup_{t<\tau,\,\Node\in\mathcal{A},\,\mathcal{A}\in \cV} \big| \hLambda_{\Noder,n}(t)-\hLambda_{\Node,n}(t) \big|\\
 \leq & \sup_{t<\tau,\Node \in \mathcal{A},\mathcal{A}\in \cV} \Big| \sum_{s\leq t} \frac{[\Delta N(s)]_{\Noder}}{\sum_{i=1}^{\# \Noder} \mOne(Y_i \geq s)}-\sum_{s\leq t} \frac{[\Delta N(s)]_{\Noder}+[\Delta N(s)]_{\Node \setminus \Noder}}{\sum_{i=1}^{\# \Node} \mOne(Y_i \geq s)} \Big|\\
= & \sup_{t<\tau,\Node \in \mathcal{A},\mathcal{A}\in \cV} \Big| \sum_{s\leq t} \frac{[\Delta N(s)]_{\Noder}}{\sum_{i=1}^{\# \Noder} \mOne(Y_i \geq s)} \\
& \qquad\qquad\qquad\qquad\qquad-\sum_{s\leq t} \frac{[\Delta N(s)]_{\Noder}+[\Delta N(s)]_{\Node \setminus \Noder}}{\sum_{i=1}^{\# \Noder} \mOne(Y_i \geq s)+\sum_{i=\# \Noder+1}^{\# \Node} \mOne(Y_i \geq s)}\Big|\\
\leq & \sup_{t<\tau,\Node \in\mathcal{A},\mathcal{A}\in \cV} \bigg\{ \sum_{j=\# \Noder +1}^{\# \Node} \frac{\Delta N(s_j)}{ \sum_{i=1}^{\# \Noder} \mOne(Y_i \geq s_j)+\sum_{i=\# \Noder+1}^{\# \Node} \mOne(Y_i \geq s_j)  }\\
&\qquad + \sum_{j=1}^{\# \Noder}\Big[ \frac{ \Delta N(s_j)}{\sum_{i=1}^{\# \Noder} \mOne(Y_i \geq s_j)}-\frac{ \Delta N(s_j)}{\sum_{i=1}^{\# \Noder} \mOne(Y_i \geq s_j)+\sum_{i=\# \Noder+1}^{\#\Node} \mOne(Y_i \geq s_j)}\Big] \bigg\},
\end{align*}
where $N(s)=\sum_{i=1}^{n} N_i(s)=\sum_{i=1}^{n} \mOne(Y_i\leq s, \delta_i=1)$. By Lemma \ref{lemma:tau} the first term is bounded by
\begin{align*}
&\frac{\# \Node-\# \Noder}{\Big(1-\sqrt{\frac{4\log(|\mathcal{R}|\sqrt n)}{kM}}\Big)k M} \\
\leq&~ \frac{1}{\Big(1-\sqrt{\frac{4\log(|\mathcal{R}|\sqrt n)}{kM}}\Big)M} \Big[\frac{6\zeta^2}{\sqrt{k}} + 2\sqrt{\frac{6\log(|\mathcal{R}|)}{k}}+O\Big(\frac{\log(|\mathcal{R}|)}{k}\Big)\Big],
\end{align*}
and the second term is bounded by
\begin{align*}
&\sum_{j=1}^{\# \Noder}\Big[ \frac{ \Delta N(s_j)}{\sum_{i=1}^{\# \Noder} \mOne(Y_i \geq s_j)}-\frac{ \Delta N(s_j)}{\sum_{i=1}^{\# \Noder} \mOne(Y_i \geq s_j)+\sum_{i=\# \Noder+1}^{\# \Node} \mOne(Y_i \geq s_j)}\Big]\\
\leq &~ \sum_{j=1}^{\# \Noder} \frac{ \Delta N(s_j) \sum_{i=\# \Noder+1}^{\# \Node} \mOne(Z_i \geq s_j)  }{ \big[\sum_{i=1}^{\# \Noder} \mOne(Y_i \geq s_j)\big] \big[\sum_{i=1}^{\# \Noder} \mOne(Y_i \geq s_j)+\sum_{i=\# \Noder+1}^{\# \Node} \mOne(Y_i \geq s_j)\big] }\\
\leq &~ \sum_{j=1}^{\# \Noder} \frac{ \Delta N(s_j)(\# \Node-\#\Noder)}{ \big[\sum_{i=1}^{\# \Noder} \mOne(Y_i \geq s_j)\big] \big[\sum_{i=1}^{\#\Noder} \mOne(Y_i \geq s_j)+\sum_{i=\# \Noder+1}^{\# \Node} \mOne(Y_i \geq s_j)\big]}\\
\leq &~ \frac{ \#\Noder (\# \Node-\# \Noder)}{ \Big(1-\sqrt{\frac{4\log(|\mathcal{R}|\sqrt n)}{kM}} \Big)^2k^2 M^2}\\
\leq &~ \frac{ 2}{ \Big(1-\sqrt{\frac{4\log(|\mathcal{R}|\sqrt n)}{kM}}\Big)^2 M^2} \Big[\frac{6\zeta^2}{\sqrt{k}} + 2\sqrt{\frac{6\log(|\mathcal{R}|)}{k}}+O\Big(\frac{\log(|\mathcal{R}|)}{k}\Big)\Big].
\end{align*}

Combining these two terms, the first part of Equation \eqref{eqn:conboundthreeparts} is bounded by
\begin{align}
\frac{ 3}{ \Big(1-\sqrt{\frac{4\log(|\mathcal{R}|\sqrt n)}{kM}}\Big)^2 M^2}\Big[\frac{6\zeta^2}{\sqrt{k}} + 2\sqrt{\frac{6\log(|\mathcal{R}|)}{k}}+O\Big(\frac{\log(|\mathcal{R}|)}{k}\Big)\Big],
\label{partI}
\end{align}
with probability larger than $1-1/\sqrt n$. For the second part, by Theorem \ref{thm:survbound},
\begin{align}
 \sup_{t<\tau, \Noder \in \mathcal{R}, \# \Noder \geq k/2} \big|  \hLambda_{\Noder,n}(t)-\Lambda_{\Noder, n}^\ast(t) \big| \leq \frac{\{1728 \log (n)\}^{1/2}}{k^{1/2}M^2},
\label{partII}
\end{align}
with probability larger than $ 1-1/\sqrt n$. The third part of Equation \eqref{eqn:conboundthreeparts} is bounded by
\begin{align}
\notag &\sup_{t<\tau, \Node \in \mathcal{A},\mathcal{A}\in \cV} \big|\Lambda_{\Node, n}^\ast(t)-\Lambda_{\Noder, n}^\ast(t)\big|\\
\notag \leq &\sup_{t<\tau, \Node \in \mathcal{A},\mathcal{A}\in \cV} \Big| \int_0^t \frac{\sum_{i=1}^{\# \Node} \{1-G_i(s)\}dF_i(s)}{\sum_{i=1}^{\# \Node}\{1-G_i(s)\}\{1-F_i(s)\}} -\int_0^t\frac{\sum_{i=1}^{\# \Noder} \{1-G_i(s)\}dF_i(s)}{\sum_{i=1}^{\#\Noder}\{1-G_i(s)\}\{1-F_i(s)\}} \Big|\\
\notag \leq & \sup_{t<\tau,\Node \in \mathcal{A},\mathcal{A}\in \cV}   \int_0^t \bigg| \frac{  \big[\sum_{i=1}^{\# \Noder} \{1-G_i(s)\}dF_i(s) \big]\big[ \sum_{i=\# \Noder+1}^{\# \Node}\{1-G_i(s)\}\{1-F_i(s)\} \big]}{ \big[ \sum_{i=1}^{\# \Noder}\{1-G_i(s)\}\{1-F_i(s)\} \big] \big[\sum_{i=1}^{\# \Node}\{1-G_i(s)\}\{1-F_i(s)\} \big]} \\
\notag & \qquad \qquad \qquad - \frac{\big[\sum_{i=\# \Noder+1}^{\# \Node} \{1-G_i(s)\}dF_i(s) \big]\big[ \sum_{i=1}^{\# \Noder}\{1-G_i(s)\}\{1-F_i(s)\} \big] }{\big[ \sum_{i=1}^{\# \Noder}\{1-G_i(s)\}\{1-F_i(s)\}\big] \big[ \sum_{i=1}^{\# \Node}\{1-G_i(s)\}\{1-F_i(s)\}\big] } \bigg| \\
\notag \leq & \sup_{t<\tau,\Node \in \mathcal{A},\mathcal{A}\in \cV}  \tau \frac{ \#\Node(\#\Node-\#\Noder)}{\#\Noder\# \Node M^4}  \\
\leq & \frac{2\tau}{M^4}\{\frac{3\zeta^2}{\sqrt{k}} +2\sqrt{\frac{3\log(|\mathcal{R}|)}{k}}+O(\frac{\log(|\mathcal{R}|)}{k})\}.
\label{partIII}
\end{align}

Combining inequalities \eqref{partI}, \eqref{partII} and \eqref{partIII} and Corollary 8 in \cite{wager2015adaptive}, we obtain the desired adaptive concentration bound. With probability larger than $1-2/\sqrt n$, we have
\begin{align*}
& \sup_{t<\tau, \Node \in \mathcal{A},\mathcal{A}\in \cV} |\hLambda_{\Node,n}(t)-\Lambda_{\Node, n}^\ast(t)| \\
\leq &~ \frac{ 3}{ \Big(1-\sqrt{\frac{4\log(|\mathcal{R}|\sqrt n)}{kM}}\Big)^2 M^2}\Big[\frac{6\zeta^2}{\sqrt{k}} + 2\sqrt{\frac{6\log(|\mathcal{R}|)}{k}}+O\Big(\frac{\log(|\mathcal{R}|)}{k}\Big)\Big]  \\
&\qquad + \frac{(1728 \log n)^{1/2}}{k^{1/2}M^2}+  \frac{2\tau}{M^4}\Big\{\frac{3\zeta^2}{\sqrt{k}}
+2\sqrt{\frac{3\log(|\mathcal{R}|)}{k}}+O\Big(\frac{\log(|\mathcal{R}|)}{k}\Big)\Big\}\\
\leq &~ M_1 \Big[\sqrt {\frac{\log(|\mathcal{R}|)}{k} }+\sqrt{\frac{\log(n)}{k}} \Big]  \leq M_1 \sqrt {  \frac{\log(n/k)[\log(dk)+\log\log(n)]}{k\log((1-\alpha)^{-1})} },
\end{align*}
where $M_1$ is an universal constant. This completes the proof of Theorem \ref{thm:bound}. $\Box$\\ 

\begin{corollary}\label{thm:boundforest}
Suppose Assumptions \ref{asm:tau}-\ref{asm:k} hold. Then all valid forests concentrate on the censoring contaminated forest with probability larger than $1-2/\sqrt n$,
\begin{align*}
\sup_{t<\tau,\,x\in[0,1]^d,\,\{\ccA_{(b)}\}_1^B \in \cH_{\alpha,k}(\cD_n)} & \left\lvert\hLambda_{\{\ccA_{(b)}\}_1^B,n}(t \mid x)-\Lambda_{\{\ccA_{(b)}\}_1^B, \,n}^\ast(t \mid x)\right\lvert \\
\leq&~ M_1 \sqrt {  \frac{\log(n/k)[\log(dk)+\log\log(n)]}{k\log((1-\alpha)^{-1})} },
\end{align*}
for some universal constant $M_1$. 
\end{corollary}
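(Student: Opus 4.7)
\textbf{Proof plan for Corollary \ref{thm:boundforest}.} The plan is to reduce the forest concentration bound directly to the tree concentration bound already established in Theorem \ref{thm:bound}, via a single application of the triangle inequality. The crux is that the tree bound is uniform over the entire set $\cV_{\alpha,k}(\cD_n)$ of valid trees simultaneously, and holds on one event of probability at least $1-2/\sqrt{n}$. This uniformity is exactly what makes averaging over $B$ trees free of charge: no union bound over $B$ (let alone over the random partition indices) is needed, and neither the rate nor the probability degrades.

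First, I would record that by definition of a valid forest, every $\{\ccA_{(b)}\}_1^B \in \cH_{\alpha,k}(\cD_n)$ has each tree partition $\ccA_{(b)} \in \cV_{\alpha,k}(\cD_n)$. Since both $\hLambda_{\{\ccA_{(b)}\}_1^B,n}$ and $\Lambda_{\{\ccA_{(b)}\}_1^B, n}^\ast$ are defined as the arithmetic means $\frac{1}{B}\sum_b \hLambda_{\ccA_{(b)},n}$ and $\frac{1}{B}\sum_b \Lambda_{\ccA_{(b)}, n}^\ast$ over the same collection of tree partitions, the triangle inequality gives, pointwise in $(t,x)$,
\begin{align*}
\left|\hLambda_{\{\ccA_{(b)}\}_1^B,n}(t \mid x) - \Lambda_{\{\ccA_{(b)}\}_1^B, n}^\ast(t \mid x)\right|
&\leq \frac{1}{B}\sum_{b=1}^B \left|\hLambda_{\ccA_{(b)},n}(t\mid x) - \Lambda_{\ccA_{(b)}, n}^\ast(t\mid x)\right| \\
&\leq \sup_{\ccA \in \cV_{\alpha,k}(\cD_n)} \left|\hLambda_{\ccA,n}(t\mid x) - \Lambda_{\ccA,n}^\ast(t\mid x)\right|.
\end{align*}
The right-hand side no longer depends on the forest, so taking the supremum over $t<\tau$, $x\in[0,1]^d$, and $\{\ccA_{(b)}\}_1^B \in \cH_{\alpha,k}(\cD_n)$ on the left-hand side preserves the inequality. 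Invoking Theorem \ref{thm:bound} on this tree-level supremum then yields the announced forest bound, with the same universal constant $M_1$, on the same event of probability at least $1-2/\sqrt{n}$.

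There is essentially no technical obstacle here: the whole argument collapses to one triangle inequality plus the uniform tree bound. The only subtlety worth flagging is the order of quantifiers. Because Theorem \ref{thm:bound} controls the \emph{supremum} over $\cV_{\alpha,k}(\cD_n)$ rather than a single fixed tree, once its high-probability event holds, every tree appearing in any valid forest, however chosen (including with bootstrap resampling or any additional randomization used in Algorithm \ref{alg:forests}), is controlled deterministically by the same quantity. Thus averaging $B$ such controlled quantities keeps the same bound, and the forest corollary follows immediately.
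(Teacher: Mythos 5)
Your proposal is correct and follows essentially the same route as the paper: since every tree in a valid forest lies in $\cV_{\alpha,k}(\cD_n)$, the averaged forest estimator is controlled by the uniform tree bound of Theorem \ref{thm:bound} via the triangle inequality, on the same event of probability at least $1-2/\sqrt{n}$ with the same constant $M_1$. Your remark that no union bound over $B$ is needed because the tree bound is uniform over all valid partitions simultaneously is exactly the (implicit) point of the paper's argument.
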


\noindent {\bf Proof of Corollary~\ref{thm:boundforest}.} Since for any $\ccA \in \cV_{\alpha,k}(\cD_n)$ we have
\begin{align*}
 \sup_{t<\tau,\,x\in [0,1]^d} \left\lvert \hLambda_{\ccA,n}(t\mid x)-\Lambda_{\ccA, n}^\ast(t\mid x) \right\rvert
 \leq ~ M_1\sqrt {  \frac{\log(n/k)[\log(dk)+\log\log(n)]}{k\log((1-\alpha)^{-1})} },
\end{align*}
and furthermore if $\underset{n\rightarrow \infty}{\liminf} (d/n)\rightarrow \infty$, for any $\ccA \in \cV_{\alpha,k}(\cD_n)$,
\begin{align*}
\sup_{t<\tau,\,x\in [0,1]^d}  \left\lvert \hLambda_{\ccA,n}(t\mid x)-\Lambda_{\ccA, n}^\ast(t\mid x) \right\rvert
 \leq ~ M_1\sqrt {  \frac{\log(n)\log(d)}{k\log((1-\alpha)^{-1})} }.
\end{align*}

By the definition of $\cH_{\alpha,k}(\cD_n)$, any $\{\ccA_{(b)}\}_1^B$ belonging to $\cH_{\alpha,k}(\cD_n)$ is an element of $\cV_{\alpha,k}(\cD_n)$. Hence we have,
\begin{align*}
\sup_{t<\tau,\,x\in[0,1]^d,\,\{\ccA_{(b)}\}_1^B \in \cH_{\alpha,k}(\cD_n)} & \left\lvert\hLambda_{\{\ccA_{(b)}\}_1^B,n}(t \mid x)-\Lambda_{\{\ccA_{(b)}\}_1^B, \,n}^\ast(t \mid x)\right\lvert \\
\leq&~ M_1 \sqrt {  \frac{\log(n/k)[\log(dk)+\log\log(n)]}{k\log((1-\alpha)^{-1})} },
\end{align*}
for some universal constant $M_1$. $\Box$\\

\section{} \label{proof:consistency1}
\noindent {\bf Proof of Theorem~\ref{treeconsistency1}.}
In order to show consistency, we first show that each terminal node is small enough in all $d$ dimensions. Let $m$ be the lower bound of the number of splits on the terminal node $\Node$ containing $x$, and $m_i$  be the number of splits on the $i$-th dimension. Then we have
\begin{align*}
n\alpha^m=k, \quad m=\log _{1/\alpha}(n/k)=\frac{\log n-\log k}{\log (1/\alpha)}~~ \text{and} ~~\sum_{i=1}^{d} m_i=m.
\end{align*}
The lower bound of the number of splits on the $i$-th dimension $m_i$ has distribution $Binomial(m,\frac{1}{d})$. By the Chernoff bound on each dimension,
\begin{align*}
\pr\Big(m_i>\frac{(1-c_2)m}{d}\Big) > 1- \exp \Big\{-\frac{c_2^2m}{2d}\Big\}
\end{align*}
with any $0<c_2<1$. Then, by Bonferroni,
\begin{align*}
\pr\Big(\min m_i>\frac{(1-c_2)m}{d}\Big)>1- d \exp \Big\{ -\frac{c_2^2m}{2d} \Big\}.
\end{align*}



Suppose we are splitting at the $i$-th dimension on a specific internal node with $\nu$ observations. 
Recall the splitting rule is choosing the splitting point randomly between the $\max((k+1),\lceil \alpha \nu \rceil)$-th, and $\min( (n-k-1), \lfloor (1-\alpha) \nu \rfloor )$-th observations. Without loss of generality, we consider splitting between the $\lceil \alpha \nu \rceil$-th and $\lfloor (1-\alpha) \nu \rfloor$-th observations. The event that the splitting point is between $q_\alpha$ and $q_{1-\alpha}$ happens with probability larger than $c_3$, where $q_\alpha$ is the $\alpha$-th quantile of the $i$-th component of $X$ conditional on the current internal node and previous splits.
Here $c_3=(1-2\alpha)/8$ and is just a lower bound. Since with probability larger than 1/4, the $\lfloor \frac{\alpha+0.5}{2}\nu \rfloor$-th order statistic is larger than $\alpha$ and the $\lceil \frac{1.5-\alpha}{2}\nu \rceil$-th order statistic is less than $1-\alpha$ for large enough $\nu$, where $\nu$ is known to be larger than $2k$. So with probability larger than $c_3$, the splitting point is between $q_{\alpha}$ and $q_{1-\alpha}$. Thanks to Assumption~\ref{asm:density}, conditioning on the current internal node and previous splits, $\max_{x^{(j)}} p(x^{(j)})/\min_{x^{(j)}} p(x^{(j)}) <\zeta^2$, where $p(x^{(j)})$ is the marginal distribution of $x^{(j)}$.  So with probability larger than $c_3$, the splitting point falls into the interval $[\alpha/\zeta^2, 1-\alpha/\zeta^2]$.


The number of splits which partition the parent node to two child nodes with proportion of length between both $\alpha$ and $1-\alpha$ on the $i$-th dimension of the terminal node  $\Node$ is denoted by $m^\ast$ and is $Binomial(m_i,c_3)$. By the Chernoff bound, for any $0<c_4<1$,
\begin{align*}
\pr\big(m^\ast\geq (1-c_4)c_3m_i \big) \geq 1-\exp\Big\{-\frac{c_4^2c_3 m_i}{2}\Big\}.
\end{align*}
If we denote the length of the $i$-th dimension on the terminal node $\Node$ as $l_i$,
\begin{align*}
\pr\big(l_i\leq (1-\alpha/\zeta^2)^{(1-c_4)c_3m_i}\big) \geq 1-\exp\Big\{-\frac{c_4^2c_3 m_i}{2}\Big\}.
\end{align*}
Furthermore, by combining the $d$ dimensions together, we obtain
\begin{align*}
\pr\big(\max_i l_i\leq (1-\alpha/\zeta^2)^{(1-c_4) c_3 \min_i m_i}\big) \geq 1-d\exp\Big\{-\frac{c_4^2c_3 \min_i m_i }{2}\Big\},
\end{align*}
and then
\begin{align*}
\max_{x_1,x_2\in \Node}||x_1-x_2||\leq \sqrt{d} (1-\alpha/\zeta^2)^{\frac{c_3(1-c_4)(1-c_2)m}{d}},
\end{align*}
with probability larger than $1- d \exp\big\{-\frac{c_2^2m}{2d}\big\} -d \exp\big\{- \frac{(1-c_2)c_3c_4^2m}{2d}\big\}$. Hence, for any observation $x_j$ inside the node $\Node$ containing $x$, by Assumption \ref{asm:lips}, we have
\begin{align*}
\sup_{t<\tau}| F(t\mid x)- F(t\mid x_j)| &\leq L_1 \sqrt{d} (1-\alpha/\zeta^2)^{\frac{c_3(1-c_4)(1-c_2)m}{d}},\\
\sup_{t<\tau}| f(t\mid x)-f(t\mid x_j)|  &\leq (L_1^2+L_2) \sqrt{d} (1-\alpha/\zeta^2)^{\frac{c_3(1-c_4)(1-c_2)m}{d}},
\end{align*}
where $f(\cdot\mid x)$ and $F(\cdot\mid x)$, respectively, denote the true density function and distribution function at $x\in \Node$. Then $\Lambda_{\Node, n}^\ast(t)$ has the upper and lower bounds
\begin{align*}
\int_0^t  \frac{f(s\mid x)+b_1}{1-F(s \mid x)-b_2} \, ds \quad \text{and} \quad \int_0^t  \frac{f(s\mid x)-b_1}{1-F(s \mid x)+b_2}\,ds,
\end{align*}
respectively, where
\begin{align*}
b_1 &=(L_1^2+L_2) \sqrt{d} (1-\alpha/\zeta^2)^{\frac{c_3(1-c_4)(1-c_2)m}{d}}, \,\text{and} \,\, b_2 =L_1 \sqrt{d} (1-\alpha/\zeta^2)^{\frac{c_3(1-c_4)(1-c_2)m}{d}}.
\end{align*}
Hence, $|\Lambda_{\Node, n}^\ast(t)-\Lambda(t\mid x)|$ has the bound
\begin{align*}
\int_0^t \frac{b_1 (1-F(s\mid x)) +b_2 f(s\mid x)}{(1-F(s\mid x)-b_2)(1-F(s\mid x))}ds
\leq M_2 \tau \sqrt{d} (1-\alpha/\zeta^2)^{\frac{c_3(1-c_4)(1-c_2)m}{d}},
\end{align*}
for any $t<\tau$, where $M_2$ is some constant depending on $L_1$ and $L_2$. Hence, for the terminal node $\Node$ containing $x$, we bound the bias by
\begin{align*}
\sup_{t<\tau} |\Lambda^\ast_{\Node, n}(t)-\Lambda(t\mid x)| \leq M_2 \tau \sqrt{d} (1-\alpha/\zeta^2)^{\frac{c_3(1-c_4)(1-c_2)m}{d}},
\end{align*}
with probability larger than $1- d \exp\big\{-\frac{c_2^2m}{2d}\big\} -d\exp\big\{- \frac{(1-c_2)c_3c_4^2m}{2d}\big\}$. Combining this with the adaptive concentration bound result from Theorem \ref{thm:bound}, for each $x$, we further have
\begin{align*}
\sup_{t<\tau} |\hLambda_{\ccA,n}(t\mid x)-\Lambda (t\mid x)|=O\Big (\sqrt {  \frac{\log(n/k)[\log(dk)+\log\log(n)]}{k\log((1-\alpha)^{-1})} }+\big(\frac{k}{n}\big)^{\frac{c_1}{d}}\Big ),
\end{align*}
with probability larger than $1-w_n$, where
\begin{align*}
w_n=\frac{2}{\sqrt{n}}+d \exp\Big\{-\frac{c_2^2\log_{1/\alpha}(n/k)}{2d}\Big\}+d \exp\Big\{- \frac{(1-c_2)c_3c_4^2 \log_{1/\alpha}(n/k)}{2d}\Big\},
\end{align*}
and $c_1=\frac{c_3(1-c_2)(1-c_4)}{\log_{1-\alpha}(\alpha)}$. This completes the proof of point-wise consistency.
$\Box$\\

\noindent {\bf Proof of Theorem~\ref{survivalforestcons1.1}.} From Theorem \ref{treeconsistency1}, we need to establish the bound of $|\hLambda_{\ccA,n}(t\mid x)-\Lambda (t\mid x)|$ under an event with small probability $w_n$. Noticing that $\hLambda_{\ccA,n}(t\mid x)$ is simply the Nelson-Aalen estimator of the CHF with at most $k$ terms, for any $t<\tau$, we have
\begin{align*}
\hLambda_{\ccA,n}(t\mid x)\leq \frac{1}{k}+\ldots+\frac{1}{1}=O(\log(k)),
 \end{align*}
which implies that
\begin{align*}
|\hLambda_{\ccA,n}(t\mid x)-\Lambda (t\mid x)| \leq O(\log(k)).
\end{align*}
Then we have
\begin{align*}
&\sup_{t<\tau} E_X \big|\hLambda_n(t\mid X)-\Lambda (t\mid X)\big|\\
=&~ O \Big (\sqrt {  \frac{\log(n/k)[\log(dk)+\log\log(n)]}{k\log((1-\alpha)^{-1})} }+\big(\frac{k}{n}\big)^{\frac{c_1}{d}}+\log(k)w_n \Big ),
\end{align*}
which leads to the following bounds:
\begin{align*}
&\sup_{t<\tau} E_X| \hLambda_{\{\ccA_{(b)}\}_1^B,n}(t\mid X)-\Lambda(t\mid X)| \\
= &\lim_{B \rightarrow \infty} \sup_{t<\tau} E_X|\frac{1}{B}\sum_{b=1}^B \hLambda_{\ccA_{(b)},n} (t\mid X)-\frac{1}{B}\sum_{b=1}^B \Lambda (t\mid X)|\\
\leq& \lim_{B \rightarrow \infty} \frac{1}{B} \sum_{b=1}^B \sup_{t<\tau} E_X| \hLambda_{\ccA_{(b)},n} (t\mid X)-\Lambda (t\mid X)|\\
=&~ O\Big (\sqrt {  \frac{\log(n/k)[\log(dk)+\log\log(n)]}{k\log((1-\alpha)^{-1})} }+\big(\frac{k}{n}\big)^{\frac{c_1}{d}}+\log(k)w_n \Big ). ~\Box
\end{align*}


\section{} \label{proof:consistency2}


\begin{lemma}\label{lemma:sample/expectation0}
Under Assumption~\ref{asm:tau} and assume that the density function of the failure time $f_i(t) = dF_i(t)$ is bounded above by $L$ for each $i$. The difference between $\Lambda_{\Node,n}^\ast(t)$ and $\Lambda_\Node^\ast(t)$ is bounded by
\begin{align*}
\sup_{t<\tau} \big|\Lambda_{\Node,n}^\ast(t)-\Lambda_\Node^\ast(t)\big| \leq \sqrt{\frac{4\tau^2L^2 \log(4\sqrt n)}{M^2 n}},
\end{align*}
with probability larger than $1-1/\sqrt n$.
\end{lemma}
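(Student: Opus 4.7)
The plan is to reduce the claim to uniform deviations of two empirical processes via a ratio decomposition, then exploit monotonicity to obtain the $\sqrt{\log n / n}$ rate. Writing
\[
\Lambda_{\Node,n}^{*}(t) = \int_{0}^{t}\frac{\bar U_n(s)}{\bar V_n(s)}\,ds,
\qquad
\Lambda_{\Node}^{*}(t) = \int_{0}^{t}\frac{EU(s)}{EV(s)}\,ds,
\]
where $\bar U_n(s)$ and $\bar V_n(s)$ are sample averages over $\{i:X_i\in\Node\}$ of $U_i(s)=[1-G_i(s)]f_i(s)$ and $V_i(s)=[1-G_i(s)][1-F_i(s)]$, and $EU,EV$ are the corresponding conditional expectations given $X\in\Node$. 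Assumption~\ref{asm:tau} yields $V_i(s)=\pr(Y_i\ge s\mid X_i)\ge M$ for $s\le\tau$, hence both $\bar V_n(s)$ and $EV(s)$ stay above $M$ on $[0,\tau]$, while $U_i(s)\le L$ gives $EU(s)\le L$.

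The first step is the standard ratio identity
\[
\frac{\bar U_n}{\bar V_n} - \frac{EU}{EV}
= \frac{\bar U_n - EU}{\bar V_n}
+ \frac{EU\,(EV-\bar V_n)}{\bar V_n\,EV},
\]
integrated over $[0,t]$. The second summand is dominated by $(\tau L/M^{2})\sup_{s<\tau}|\bar V_n(s)-EV(s)|$. Since each $V_i(\cdot)$ is monotone nonincreasing with range in $[0,1]$, a grid of size $K\sim\sqrt n$ in $s$, Hoeffding at the grid points, and a union bound give $\sup_{s\le\tau}|\bar V_n(s)-EV(s)|\lesssim\sqrt{\log(4\sqrt n)/n}$ with probability at least $1-1/(2\sqrt n)$; monotonicity ensures the inter-grid error is $O(1/K)$ and is absorbed into the rate.

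The first summand is the delicate one, because $U_i(\cdot)$ need not be monotone and no time-regularity on $f_i$ or $G_i$ is assumed. I handle it by passing to the primitive $A_i(t)=\int_{0}^{t}U_i(s)\,ds=\pr(T_i\le t,\delta_i=1\mid X_i)\in[0,1]$, which \emph{is} monotone in $t$. With $D_n(s)=\bar A_n(s)-EA(s)$, integration by parts gives
\[
\int_{0}^{t}\frac{\bar U_n(s)-EU(s)}{\bar V_n(s)}\,ds
= \frac{D_n(t)}{\bar V_n(t)} + \int_{0}^{t}\frac{D_n(s)}{\bar V_n(s)^{2}}\,d\bar V_n(s).
\]
Because $\bar V_n$ is nonincreasing and bounded below by $M$, the right-hand side is dominated by $(2/M)\sup_{s\le\tau}|D_n(s)|$, and $\sup_{s\le\tau}|D_n(s)|$ is handled by the same monotone grid argument (now applied to the monotone $A_i\in[0,1]$), yielding an $O(\sqrt{\log(4\sqrt n)/n})$ bound with probability at least $1-1/(2\sqrt n)$.

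Combining the two pieces and collecting constants gives the stated inequality with probability at least $1-1/\sqrt n$. The main obstacle is precisely the non-monotonicity of $U_i(\cdot)$ together with the absence of any temporal regularity assumption on $f$ or $G$; the integration-by-parts trick against the monotone primitive $A_i$ is what makes the argument go through, reducing both terms to uniform deviations of averages of $[0,1]$-valued monotone functions, which are controlled by elementary discretization plus Hoeffding.
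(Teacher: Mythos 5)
Your argument is correct, but it takes a genuinely different (and more careful) route than the paper's. The paper's proof is essentially two applications of Hoeffding's inequality at each fixed $s$ to the node averages of $[1-G_i(s)]f_i(s)$ and $[1-G_i(s)][1-F_i(s)]$, combined through the same ratio decomposition you use, with both denominators bounded below by $M$; it never introduces the primitive $A_i$ or integration by parts, and it passes from pointwise-in-$s$ deviation bounds to the supremum over $t<\tau$ without an explicit uniformity argument. Your monotone-grid discretization for $\bar V_n$, and—for the non-monotone numerator—the integration by parts against the monotone primitive $A_i(t)=\pr(T_i\le t,\,\delta_i=1\mid X_i)$ (your constant is fine: the boundary term gives $1/M$ and the total mass of $d(1/\bar V_n)$ is at most $(1-M)/M$, so $\tfrac{1}{M}+\tfrac{1-M}{M}\le \tfrac{2}{M}$), are exactly what supply the uniformity in $s$ that the paper leaves implicit, and they do so without assuming any temporal regularity of $f$ or $G$; in that sense your proof is a tightened version of the paper's. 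What you give up is the exact displayed constant: the union bound over a grid of size $K\sim\sqrt n$ inflates the log factor relative to the paper's single-point threshold $\sqrt{\log(4\sqrt n)/(2n)}$, and you pick up the additive $O(1/K)$ discretization error, so your argument delivers the stated bound only up to a universal constant rather than verbatim. Since the lemma enters the rest of the paper (e.g.\ Lemma~\ref{lemma:sampleexpectation} and Lemma~\ref{lemma:splitcombine2}) only through an unspecified universal constant, this is harmless, but you should either note it or restate the conclusion with your own constant rather than claiming the displayed one exactly.
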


\begin{proof}
By Hoeffding's inequality, we have for each $s\leq t$,
\begin{align*}
&\pr \Big(\,\Big |\frac{1}{n}\sum_{X_i\in \Node} [1-G_i(s)]f_i(s)-E_X\{[1-G(s\mid X)]f(s\mid X)\}\Big |\\
& \qquad\qquad\qquad\qquad\qquad\qquad\qquad\qquad\geq \sqrt{\frac{L^2 \log(4\sqrt n )}{2n}} \,\, \Big) \leq \frac{1}{2\sqrt{n}},
\end{align*}
and
\begin{align*}
& \pr \Big(\,\Big |\frac{1}{n}\sum_{X_i\in \Node} [1-G_i(s)][1 - F_i(s\mid X)]-E_X\{[1-G(s\mid X)][1 - F(s\mid X)]\}\Big |\\
& \qquad\qquad\qquad\qquad\qquad\qquad\qquad\qquad \geq \sqrt{\frac{ \log(4\sqrt n)}{2n}} \,\, \Big) \leq \frac{1}{2\sqrt{n}}.
\end{align*}
After combining the above two inequalities, we have
\begin{align*}
\sup_{t<\tau} \big|\Lambda_{\Node, n}^\ast(t)- \Lambda_\Node^\ast(t)\big| \leq \sqrt{\frac{4\tau^2L^2 \log(4\sqrt n)}{M^2 n}},
\end{align*}
with probability larger than $1-1/\sqrt{n}$.
\end{proof}

\noindent {\bf Proof of Lemma~\ref{lemma:sampleexpectation}.} In a similar way as done for Lemma \ref{lemma:sample/expectation0}, for each $s\leq t$,
{
\small
\begin{align*}
\pr \bigg(\Big|\frac{1}{n}\sum_{X_i\in \Node} [1-G_i(s)]f_i(s)-E_X\{[1-G(s\mid X)]f(s\mid X)\}\Big|&\\
\geq \sqrt{\frac{L^2 \log(4\sqrt n |\Noder|)}{2n}} \,\bigg) \leq \frac{1}{2\sqrt{n}}&,
\end{align*}
}
and
{
\small
\begin{align*}
\pr \bigg(\Big|\frac{1}{n}\sum_{X_i\in \Node} [1-G_i(s)][1 - F_i(s\mid X)]-E_X\{[1-G(s\mid X)][1 - F(s\mid X)]\}\Big|&\\
\geq \sqrt{\frac{ \log(4\sqrt n |\Noder|)}{2n}}\bigg) \leq \frac{1}{2\sqrt{n}}&.
\end{align*}
}
Thus, with probability larger than $1/\sqrt n$,
\begin{align*}
|\Lambda_{\Node,n}^\ast(t)- \Lambda_\Node^\ast(t)| \leq&~  \sqrt{\frac{4\tau^2L^2 \log(4\sqrt n |\Noder|)}{M^2 n}}\\
\leq&~ M_2\sqrt {  \frac{\log(n/k)[\log(dk)+\log\log(n)]}{k\log((1-\alpha)^{-1})} },
\end{align*}
for all $t<\tau$ and all $\Node \in \mathcal{A},\mathcal{A}\in \cV_{\alpha,k}(\cD_n)$, where $M_2$ is some universal constant depending on $L$ and $M$. $\Box$\\

\begin{lemma}\label{lemma:splitcombine2}
Under the marginal screening splitting rule given in Algorithm \ref{alg:consistency2}, uniformly across all internal nodes, we essentially only split on $(d_0+d_1)$ dimensions with probability larger than $1-3/\sqrt{n}$ on the entire tree.
\end{lemma}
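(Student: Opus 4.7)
The plan is to work on a single high-probability event of mass at least $1-3/\sqrt n$ on which the marginal statistic $\Delta_1(c)$ used in Algorithm~\ref{alg:consistency2} falls strictly below its threshold whenever the candidate splitting variable $j$ lies in $\mathcal M_N$. Combined with a short induction on the depth of the tree (to verify that the ``already used'' exception of Step 3 is vacuous for noise variables), this rules out ever splitting on a coordinate outside $\mathcal M_{FC}$, and hence the fitted $d$-dimensional tree coincides with a $(d_0+d_1)$-dimensional tree.

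First I would intersect the $1-2/\sqrt n$ event of Theorem~\ref{thm:bound} with the $1-1/\sqrt n$ event of Lemma~\ref{lemma:sampleexpectation}; a union bound gives mass at least $1-3/\sqrt n$. On this event, the triangle inequality yields, uniformly in $t<\tau$, internal nodes $\Node$, coordinates $j\in\{1,\dots,d\}$ and admissible cuts $c$,
\begin{align*}
\big|\hLambda_{\Node_j^+(c),n}(t)-\hLambda_{\Node_j^-(c),n}(t)\big|\,\leq\,\big|\Lambda_{\Node_j^+(c)}^{\ast}(t)-\Lambda_{\Node_j^-(c)}^{\ast}(t)\big|\,+\,2(M_1+M_2)\sqrt{\tfrac{\log(n/k)[\log(dk)+\log\log(n)]}{k\log((1-\alpha)^{-1})}},
\end{align*}
so that maximizing over $t<\tau$ bounds $\Delta_1(c)$ by the population gap plus the $O(\sqrt{\cdot})$ concentration error.

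Second I would bound the population gap uniformly for $j\in\mathcal M_N$. Writing $A^\pm(s)=E_{X\in\Node_j^\pm(c)}[1-G(s\mid X)]f(s\mid X)$ and $B^\pm(s)=E_{X\in\Node_j^\pm(c)}[1-G(s\mid X)][1-F(s\mid X)]$, the fact that $f(\cdot\mid x)$ and $G(\cdot\mid x)$ depend on $x$ only through $x_{\mathcal M_{FC}}$ lets me rewrite $A^\pm$ and $B^\pm$ as expectations of the \emph{same} function of $X_{\mathcal M_{FC}}$ under the conditional laws $p(\cdot\mid X^{(j)}\geq c)$ and $p(\cdot\mid X^{(j)}<c)$. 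Each of these conditional densities is a weighted average of the pointwise conditional densities $p(\cdot\mid X^{(j)}=x)$, so Assumption~\ref{asm:cor} gives $A^+\in[\gamma^{-1}A^-,\gamma A^-]$ and $B^+\in[\gamma^{-1}B^-,\gamma B^-]$. Elementary algebra then produces $\gamma^{-2}(A^-/B^-)\leq A^+/B^+\leq \gamma^2(A^-/B^-)$, and invoking $A^\pm\leq L$ (bounded density) together with $B^\pm\geq M^2$ (a consequence of Assumption~\ref{asm:tau}) yields
\begin{align*}
\sup_{t<\tau}\big|\Lambda_{\Node_j^+(c)}^{\ast}(t)-\Lambda_{\Node_j^-(c)}^{\ast}(t)\big|\,\leq\,(\gamma^2-\gamma^{-2})\,\tfrac{\tau L}{M^{2}},
\end{align*}
which is exactly the deterministic constant appearing in the threshold of Step 3. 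Choosing $M_3\geq 2(M_1+M_2)$ therefore forces $\Delta_1(c)$ to be strictly less than the threshold for every $j\in\mathcal M_N$ and every admissible $c$.

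Finally, I would close the argument by induction on the depth of the tree. At the root no splits have occurred, so the ``already used'' clause is vacuous and the display above rules out every $j\in\mathcal M_N$; at deeper nodes the induction hypothesis rules out noise variables from the ancestor path, so the same threshold comparison forces the algorithm to either pick a variable in $\mathcal M_{FC}$ or, in the degenerate case that no variable passes the test, to split randomly on one of the $d_0+d_1$ feasible variables (by construction, the failure variables guaranteed by Assumption~\ref{asm:censor} pass the test, so this degenerate case does not arise on the good event). Summing over nodes and candidate cuts costs nothing because the concentration bounds are already uniform. The main obstacle is the second paragraph: one must carefully exploit that for $j\in\mathcal M_N$ the conditioning only distorts the marginal law of $X_{\mathcal M_{FC}}$ by a factor of at most $\gamma$, and verify that this distortion propagates harmlessly to the ratio $A^\pm/B^\pm$; when $\gamma=1$ the same derivation collapses to a zero gap, recovering the clean independent-covariate picture advertised in the main text.
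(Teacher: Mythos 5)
Your handling of the noise variables is essentially the paper's own Part 1: you bound the population contaminated gap $\sup_{t<\tau}\lvert\Lambda^\ast_{\Node_j^+(c)}(t)-\Lambda^\ast_{\Node_j^-(c)}(t)\rvert$ by $(\gamma^2-\gamma^{-2})\tau L/M^2$ through Assumption \ref{asm:cor} (exploiting that $F$ and $G$ depend on $X$ only through $X_{\mathcal M_{FC}}$, with the numerator bounded by $L$ and the denominator below by $M^2$), and you transfer this to $\Delta_1(c)$ on the intersection of the events of Theorem \ref{thm:bound} and Lemma \ref{lemma:sampleexpectation}, of probability at least $1-3/\sqrt n$. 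Same decomposition, same constants; the explicit induction on depth to neutralize the ``already used'' clause for noise coordinates is a nice clarification of what the paper leaves implicit.

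The gap is the other half. The paper's proof spends its entire second part showing that a never-split failure variable $j\in\mathcal M_F$ actually passes the threshold check: using $M\le 1-G(s\mid X)\le 1$ it sandwiches $\Delta^\ast(c_0)\ge M\,\ell^+(j,t_0,c_0)-M^{-1}\ell^-(j,t_0,c_0)>\ell$, then the same concentration bounds give $\Delta_1(\tilde c)\ge\Delta_1(c_0)>\ell-M_3\bigl(\tfrac{\log(n/k)[\log(dk)+\log\log(n)]}{k\log((1-\alpha)^{-1})}\bigr)^{1/2}$, and the requirement $\ell>2(\gamma^2-\gamma^{-2})\tau L/M^2$ in Assumption \ref{asm:censor} together with Assumption \ref{asm:k} makes this exceed the algorithm's threshold for $n$ large. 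You only assert this in a parenthesis (``the failure variables guaranteed by Assumption \ref{asm:censor} pass the test''), but Assumption \ref{asm:censor} is phrased in terms of the censoring-free averages $\ell^\pm$, whereas the test is on the censoring-contaminated empirical statistic $\Delta_1$, so the bridge (the $M$/$M^{-1}$ sandwich, the concentration transfer, and the numerical comparison with the threshold) must be carried out. Moreover, this step is needed by your own argument: without it you cannot rule out the fallback clause of Algorithm \ref{alg:consistency2}, which selects an index uniformly from all $d$ coordinates when no variable is feasible, and at the root node, where the ``already used'' clause is vacuous, that fallback is exactly the remaining route by which a noise coordinate could be split. The repair is short, but as written the proposal establishes only half of the lemma.
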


\noindent {\bf Proof of Lemma~\ref{lemma:splitcombine2}.} We prove the lemma in two parts. In the first part, we show that the event that a survival tree ever splits on a noise variable has with probability smaller than $3/\sqrt{n}$. In the second part, we prove that if a failure variable is randomly selected and has never been used in the upper level of the tree, then the probability that the proposed survival tree splits on this variable is at least $1-3/\sqrt{n}$.

We start with defining $\Delta^\ast(c)=\max_{t<\tau} \big| \Lambda_{\Node_j^+(c)}^\ast(t)- \Lambda_{\Node_j^-(c)}^\ast(t) \big|$. Then for any noise variable $j$,
\begin{small}
\begin{align*}
\Delta^\ast(c)=&~\max_{t<\tau} \big| \Lambda_{\Node_j^+(c)}^\ast(t) - \Lambda_{\Node_j^-(c)}^\ast(t) \big| \\
=&~\max_{t<\tau} \bigg| \int_0^t \frac{E_{X\in \Node_j^+(c)} [1-G(s\mid X)] \textup{d} F(s\mid X)}{E_{X\in \Node_j^+(c)} [1-G(s\mid X)][1 - F(s\mid X)]} \\
&\qquad \qquad \qquad  -\int_0^t \frac{E_{X\in \Node_j^-(c)} [1-G(s\mid X)] \textup{d} F(s\mid X)}{E_{X\in \Node_j^-(c)} [1-G(s\mid X)][1 - F(s\mid X)]} \bigg| \\
=&~\max_{t<\tau} \bigg| \int_0^t \frac{  \int_{x^{(j)}\geq c}  \int_{\Node^{d-1}} [1-G(s\mid x^{(-j)} )] \textup{d} F(s\mid x^{(-j)})  p(x^{(-j)}|x^{(j)}) p(x^{(j)}) dx^{(-j)} dx^{(j)} }{ \int_{x^{(j)}\geq c}  \int_{\Node^{d-1}} [1-G(s\mid x^{(-j)} )] [1-F(s\mid x^{(-j)} )]  p(x^{(-j)}|x^{(j)}) p(x^{(j)}) dx^{(-j)} dx^{(j)}  } \\
  -& \int_0^t \frac{  \int_{x^{(j)}< c}  \int_{\Node^{d-1}} [1-G(s\mid x^{(-j)} )] \textup{d} F(s\mid x^{(-j)})  p(x^{(-j)}|x^{(j)}) p(x^{(j)}) dx^{(-j)} dx^{(j)} }{ \int_{x^{(j)}< c}  \int_{\Node^{d-1}} [1-G(s\mid x^{(-j)} )] [1-F(s\mid x^{(-j)} )]  p(x^{(-j)}|x^{(j)}) p(x^{(j)}) dx^{(-j)} dx^{(j)}  } \bigg| \\
  =&~\max_{t<\tau} |(I)-(II)|,
\end{align*}
\end{small}
where $\Node^{d-1}$ refers to integrating over $d$ dimensions except variable $j$ on the internal node $\Node$  and $x^{(-j)}$ refers to $d-1$ dimensions of $x$ except the coordinate $j$. Without loss of generality, we assume that $(I)>(II)$ when the maximum is achieved. By Assumption~\ref{asm:cor},
\begin{small}
  $$(I)<\int_0^t \frac{  \int_{x^{(j)}\geq c}  \int_{\Node^{d-1}} [1-G(s\mid x^{(-j)} )] \textup{d} F(s\mid x^{(-j)})  p_\Node(x^{(-j)})\gamma p(x^{(j)}) dx^{(-j)} dx^{(j)} }{ \int_{x^{(j)}\geq c}  \int_{\Node^{d-1}} [1-G(s\mid x^{(-j)} )] [1-F(s\mid x^{(-j)} )]  p_\Node(x^{(-j)})\gamma^{-1} p(x^{(j)}) dx^{(-j)} dx^{(j)}  },$$
$$(II)> \int_0^t \frac{  \int_{x^{(j)}< c}  \int_{\Node^{d-1}} [1-G(s\mid x^{(-j)} )] \textup{d} F(s\mid x^{(-j)})  p_\Node(x^{(-j)})\gamma^{-1} p(x^{(j)}) dx^{(-j)} dx^{(j)} }{ \int_{x^{(j)}< c}  \int_{\Node^{d-1}} [1-G(s\mid x^{(-j)} )] [1-F(s\mid x^{(-j)} )]  p_\Node(x^{(-j)})\gamma p(x^{(j)}) dx^{(-j)} dx^{(j)}  },$$
\end{small}
where $p_\Node(x^{(-j)})$ refers to the marginal distribution of $x^{(-j)}$ on the internal node $\Node$. So $\Delta^\ast(c)$ is further bounded by
 \begin{small}
 \begin{align*}
& \int_0^t \frac{  \gamma \int_{x^{(j)}\geq c}  p(x^{(j)}) dx^{(j)} \int_{\Node^{d-1}} [1-G(s\mid x^{(-j)} )] \textup{d} F(s\mid x^{(-j)})  p_\Node(x^{(-j)})  dx^{(-j)}  }{ \gamma^{-1} \int_{x^{(j)}\geq c} p(x^{(j)})   dx^{(j)} \int_{\Node^{d-1}} [1-G(s\mid x^{(-j)} )] [1-F(s\mid x^{(-j)} )]  p_\Node(x^{(-j)})  dx^{(-j)}   } \\
  -& \int_0^t \frac{ \gamma^{-1} \int_{x^{(j)}< c} p(x^{(j)}) dx^{(j)}  \int_{\Node^{d-1}} [1-G(s\mid x^{(-j)} )] \textup{d} F(s\mid x^{(-j)})  p_\Node(x^{(-j)})  dx^{(-j)}  }{ \gamma \int_{x^{(j)}< c}  p(x^{(j)}) dx^{(j)}  \int_{\Node^{d-1}}  [1-G(s\mid x^{(-j)} )] [1-F(s\mid x^{(-j)} )]  p_\Node(x^{(-j)})  dx^{(-j)}  } \\
 \leq & (\gamma^2-\gamma^{-2}) \Lambda^\ast_\Node(\tau) \leq (\gamma^2-\gamma^{-2} ) \frac{\tau L}{M^2}. \\
\end{align*}
\end{small}
From the adaptive concentration bound result and Lemma \ref{lemma:sampleexpectation}, we have, for an arbitrary $x\in [0,1]^d$ and a valid partition $\mathcal{A} \in \cV_{\alpha,k}(\cD_n)$,
\begin{align*}
\max_{t<\tau} \big| \hLambda_{\ccA,n}(t\mid x)- \Lambda_\ccA^\ast(t\mid x) \big|  \leq
 M_3\sqrt {  \frac{\log(n/k)[\log(dk)+\log\log(n)]}{k\log((1-\alpha)^{-1})} },
\end{align*}
with probability larger than $1-3/\sqrt{n}$, where $M_3=\max(M_1,M_2)$. Hence
\begin{align*}
 \Delta_1(c) \leq (\gamma^2-\gamma^{-2} ) \frac{\tau L}{M^2}+ M_3\sqrt{\frac{\log(n/k)[\log(dk)+\log\log(n)]}{k\log((1-\alpha)^{-1})} },
\end{align*}
with probability larger than $1-3/\sqrt{n}$ uniformly over all possible nodes with at least $2k$ observations and all noise variables. Thus only with probability less than $3/\sqrt{n}$ will the proposed survival tree split on a noise variable.

To prove the second argument, suppose $\Node$ is the current node and $X^{(j)}$ is an important variable. Since we choose the splitting point $\tilde c$ which maximizes $\Delta_1(c)$, the empirical signal is larger than that of $c_0$. Without loss of generality, we consider a cutoff point of $c_0$ and $\ell^+(j, t_0, c_0) \geq \ell^-(j, t_0, c_0)$. Hence we are interested in
\begin{align*}
\Delta^\ast(c_0)=& \max_{t<\tau} \big| \Lambda_{ \Node_j^+(c_0)}^\ast(t)- \Lambda_{\Node_j^-(c_0)}^\ast(t) \big| \\
=& \max_{t<\tau} \bigg| \int_0^t \frac{E_{\Node_j^+(c_0)} [1-G(s\mid X)] \textup{d} F(s\mid X)}{E_{ \Node_j^+(c_0)} [1-G(s\mid X)][1 - F(s\mid X)]}\\
& \qquad \qquad \qquad -\int_0^t \frac{E_{\Node_j^-(c_0)} [1-G(s\mid X)] \textup{d} F(s\mid X)}{E_{\Node_j^-(c_0)} [1-G(s\mid X)][1 - F(s\mid X)]} \bigg|.
\end{align*}
Since $1-G(\tau)$ is bounded away from 0 by our assumption with $1-G(\tau) \geq M$, the above expression can be further bounded below by
\begin{align*}
&~\Delta^\ast(c_0)\\
\geq & M \int_0^{t_0} \frac{E_{\Node_j^+(c_0)} \textup{d} F(s\mid X)}{E_{ \Node_j^+(c_0)} [1 - F(s\mid X)]}- M^{-1} \int_0^{t_0} \frac{E_{\Node_j^-(c_0)} \textup{d} F(s\mid X)}{E_{ \Node_j^-(c_0)}[1 - F(s\mid X)]}\\
= & M \ell^+(j, t_0, c_0)  -  M^{-1} \ell^-(j, t_0, c_0) > \ell.\\
\end{align*}
Then, by the adaptive concentration bound results above, $\Delta^\ast(c_0)$ has to be close enough to $\Delta_1(c_0)$. Thus we have
\begin{align*}
\Delta_1(\tilde c)\geq \Delta_1(c_0)> \ell- M_3\sqrt {  \frac{\log(n/k)[\log(dk)+\log\log(n)]}{k\log((1-\alpha)^{-1})} },
\end{align*}
with probability larger than $1-3/\sqrt{n}$ uniformly over all possible nodes and all signal variables. $\Box$\\

\noindent {\bf Proof of Theorem~\ref{survivalforestcons2}.}
The results follow by Lemma~\ref{lemma:splitcombine2} and Theorem~\ref{treeconsistency1}.

\section{} \label{proof:consistency3}

\noindent {\bf Proof of Theorem~\ref{thm:survivalforestcons3}.} 

The proof essentially requires that uniformly across all internal nodes, we essentially only split on $d_0$ dimensions with probability larger than $1-3/\sqrt{n}$ on the entire tree. We first define $\tLambda_\Node^\ast(t)$ on a node $\Node$,
$$\tLambda_{\Node}^\ast(t) = \int_0^t \frac{E_{X\in\Node} [1-G(s\mid X)] \textup{d} F(s\mid X) /[1-\widehat G(s\mid X)] }{ E_{X\in\Node} [1-G(s\mid X)][1 - F(s\mid X)]/[1-\widehat G(s\mid X)] }.$$

For any noise variables, $\Delta^\ast(c)\leq (\gamma^2-\gamma^{-2} ){\tau L}/{M^2} $ follows the proof of Lemma~\ref{lemma:splitcombine2}. For any censoring but not failure variable $j$, we have that
\begin{small}
\begin{align*}
\Delta^\ast(c)=&~\max_{t<\tau} \big| \tLambda_{\Node_j^+(c)}^\ast(t) - \tLambda_{\Node_j^-(c)}^\ast(t) \big| \\
=&~\max_{t<\tau} \bigg| \int_0^t \frac{E_{\Node_j^+(c)} [1-G(s\mid X)]/[1-\widehat G(s\mid X)] \textup{d} F(s\mid X)}{E_{ \Node_j^+(c)} [1-G(s\mid X)]/[1-\widehat G(s\mid X)] [1 - F(s\mid X)]} \\
&\qquad \qquad \qquad  -\int_0^t \frac{E_{\Node_j^-(c)} [1-G(s\mid X)]/[1-\widehat G(s\mid X)] \textup{d} F(s\mid X)}{E_{\Node_j^-(c)} [1-G(s\mid X)]/[1-\widehat G(s\mid X)] [1 - F(s\mid X)]} \bigg| \\
 =&~\max_{t<\tau} |(I)-(II)|.
\end{align*}
\end{small}
Without loss of generality, we assume that $(I)>(II)$ when the maximum is achieved.  Since $\lim_{n\rightarrow \infty}\pr(\sup_{t<\tau}|\widehat G(t|X=x)-G(t|X=x)|>\epsilon) = 0$ for any $0<\epsilon<M$ and $x$,  the following inequalities $$\frac{1-G(t|X=x)}{1-\widehat G(t|X=x)}>\frac{1-G(t|X=x)}{1-\widehat G(t|X=x)+\epsilon}>\frac{M}{M+\epsilon},$$ $$\frac{1-G(t|X=x)}{1-\widehat G(t|X=x)}<\frac{1-G(t|X=x)}{1-\widehat G(t|X=x)-\epsilon}<\frac{M}{M-\epsilon},$$ hold. Then $\Delta^\ast(c)$ is further bounded by
\begin{small}
\begin{align*}
 & \frac{M+\epsilon}{M-\epsilon} \int_0^t  \frac{  E_{\Node^+_j(c)} \textup{d} F(s\mid x) }{E_{\Node^+_j(c)} [1- F(s\mid x)] }     - \frac{M-\epsilon}{M+\epsilon} \int_0^t \frac{   E_{\Node^-_j(c)} \textup{d} F(s\mid x)  }{  E_{\Node^-_j(c)} [1- F(s\mid x)]  } \\
\leq & [\frac{M+\epsilon}{M-\epsilon}\gamma^2- \frac{M-\epsilon}{M+\epsilon}\gamma^{-2}] \Lambda_\Node(\tau)
\leq [\frac{M+\epsilon}{M-\epsilon}\gamma^2- \frac{ M-\epsilon}{M+\epsilon}\gamma^{-2}]  \frac{\tau L}{M},
\end{align*}
\end{small}
where the first inequality holds from Assumption~\ref{asm:cor2}.

For any failure variable $j$, 
\begin{align*}
\Delta^\ast(c_0)=& \max_{t<\tau}  \big| \tLambda_{ \Node_j^+(c_0)}^\ast(t)- \tLambda_{\Node_j^-(c_0)}^\ast(t) \big|\\
\geq & \bigg|  \int_0^{t_0} \frac{E_{ \Node_j^+(c_0)} [1-G(s\mid X)]/[1-\widehat G(s\mid X)] \textup{d} F(s\mid X)}{E_{\Node_j^+(c_0)} [1-G(s\mid X)]/[1-\widehat G(s\mid X)] [1 - F(s\mid X)]}\\
& \qquad \qquad \qquad -\int_0^{t_0} \frac{E_{ \Node_j^-(c_0)} [1-G(s\mid X)]/[1-\widehat  G(s\mid X)] \textup{d} F(s\mid X)}{E_{\Node_j^-(c_0)} [1-G(s\mid X)]/[1-\widehat G(s\mid X)] [1 - F(s\mid X)]} \bigg| .
\end{align*}
Without loss of generality, we assume that $\ell^+(j, t_0, c_0) > \ell^-(j, t_0, c_0)$. We have that
\begin{align*}
&~\Delta^\ast(c_0)\\
\geq &  \int_0^{t_0}\frac{E_{\Node_j^+(c_0)} [1-G(s\mid X)]/[1-G(s\mid X)+\epsilon] \textup{d} F(s\mid X)}{E_{\Node_j^+(c_0)} [1-G(s\mid X)]/[1- G(s\mid X)-\epsilon] [1 - F(s\mid X)]}\\
& \qquad \qquad \qquad - \int_0^{t_0}  \frac{E_{ \Node_j^-(c_0)}[1-G(s\mid X)]/[1-  G(s\mid X)-\epsilon] \textup{d} F(s\mid X)}{E_{\Node_j^-(c_0)}[1-G(s\mid X)]/[1-G(s\mid X)+\epsilon] [1 - F(s\mid X)]} \, \\
\geq &  \frac{M-\epsilon}{M+\epsilon} \int_0^{t_0}   \frac{E_{\Node_j^+(c_0)} \textup{d} F(s\mid X)}{E_{\Node_j^+(c_0)}  [1 - F(s\mid X)]} - \frac{M+\epsilon}{M-\epsilon} \int_0^{t_0}  \frac{E_{\Node_j^-(c_0)}\textup{d} F(s\mid X)}{E_{\Node_j^-(c_0)} [1 - F(s\mid X)]} \, \\
=&  \frac{M-\epsilon}{M+\epsilon} \ell^+(j, t_0, c_0) - \frac{M+\epsilon}{M-\epsilon}  \ell^-(j, t_0, c_0), \,
\end{align*}
with probability going to 1. Combined with the adaptive concentration bound results,
\begin{align*}
 \max_{t<\tau}  \big| \tLambda_{\ccA,n}(t\mid x)- \tLambda_\ccA^\ast(t\mid x) \big|  \leq
 M_3 \sqrt {  \frac{\log(n/k)[\log(dk)+\log\log(n)]}{k\log((1-\alpha)^{-1})} },
\end{align*}
we have
\begin{align*}
\Delta_2(\tilde c)\geq \Delta_2(c_0)> \ell-o_p(1),
\end{align*}
uniformly over all possible nodes and all signal variables. $\Box$\\

\newpage

\bibliographystyle{acmtrans-ims}
\bibliography{survtrees}

\begin{thebibliography}{}
\ifx \url   \undefined \def \url#1{#1}   \fi

\bibitem{aalen1978nonparametric}
\textsc{Aalen, O.} (1978).
\newblock Nonparametric inference for a family of counting processes.
\newblock \emph{The Annals of Statistics\/}~\textbf{6},~4, 701--726.

\bibitem{ahn1994tree}
\textsc{Ahn, H.} \textsc{and} \textsc{Loh, W.-Y.} (1994).
\newblock Tree-structured proportional hazards regression modeling.
\newblock \emph{Biometrics\/}, 471--485.

\bibitem{altshuler1970theory}
\textsc{Altshuler, B.} (1970).
\newblock Theory for the measurement of competing risks in animal experiments.
\newblock \emph{Mathematical Biosciences\/}~\emph{6}, 1--11.

\bibitem{arlot2014analysis}
\textsc{Arlot, S.} \textsc{and} \textsc{Genuer, R.} (2014).
\newblock Analysis of purely random forests bias.
\newblock \emph{arXiv preprint arXiv:1407.3939\/}.

\bibitem{biau2012analysis}
\textsc{Biau, G.} (2012).
\newblock Analysis of a random forests model.
\newblock \emph{Journal of Machine Learning Research\/}~\textbf{13},~Apr,
  1063--1095.

\bibitem{biau2008consistency}
\textsc{Biau, G.}, \textsc{Devroye, L.}, \textsc{and} \textsc{Lugosi, G.}
  (2008).
\newblock Consistency of random forests and other averaging classifiers.
\newblock \emph{Journal of Machine Learning Research\/}~\textbf{9},~Sep,
  2015--2033.

\bibitem{biau2016random}
\textsc{Biau, G.} \textsc{and} \textsc{Scornet, E.} (2016).
\newblock A random forest guided tour.
\newblock \emph{Test\/}~\textbf{25},~2, 197--227.

\bibitem{bonato2010bayesian}
\textsc{Bonato, V.}, \textsc{Baladandayuthapani, V.}, \textsc{Broom, B.~M.},
  \textsc{Sulman, E.~P.}, \textsc{Aldape, K.~D.}, \textsc{and} \textsc{Do,
  K.-A.} (2010).
\newblock Bayesian ensemble methods for survival prediction in gene expression
  data.
\newblock \emph{Bioinformatics\/}~\textbf{27},~3, 359--367.

\bibitem{bou2011review}
\textsc{Bou-Hamad, I.}, \textsc{Larocque, D.}, \textsc{Ben-Ameur, H.},
  \textsc{and} \textsc{others}. (2011).
\newblock A review of survival trees.
\newblock \emph{Statistics Surveys\/}~\emph{5}, 44--71.

\bibitem{breiman2001random}
\textsc{Breiman, L.} (2001).
\newblock Random forests.
\newblock \emph{Machine learning\/}~\textbf{45},~1, 5--32.

\bibitem{breiman2004consistency}
\textsc{Breiman, L.} (2004).
\newblock Consistency for a simple model of random forests.

\bibitem{breiman1984classification}
\textsc{Breiman, L.}, \textsc{Friedman, J.}, \textsc{Stone, C.~J.},
  \textsc{and} \textsc{Olshen, R.~A.} (1984).
\newblock \emph{Classification and regression trees}.
\newblock CRC press.

\bibitem{breslow1974large}
\textsc{Breslow, N.}, \textsc{Crowley, J.}, \textsc{and} \textsc{others}.
  (1974).
\newblock A large sample study of the life table and product limit estimates
  under random censorship.
\newblock \emph{The Annals of Statistics\/}~\textbf{2},~3, 437--453.

\bibitem{chipman2010bart}
\textsc{Chipman, H.~A.}, \textsc{George, E.~I.}, \textsc{McCulloch, R.~E.},
  \textsc{and} \textsc{others}. (2010).
\newblock Bart: Bayesian additive regression trees.
\newblock \emph{The Annals of Applied Statistics\/}~\textbf{4},~1, 266--298.

\bibitem{ciampi1987recursive}
\textsc{Ciampi, A.}, \textsc{Chang, C.-H.}, \textsc{Hogg, S.}, \textsc{and}
  \textsc{McKinney, S.} (1987).
\newblock Recursive partition: A versatile method for exploratory-data analysis
  in biostatistics.
\newblock In \emph{Biostatistics}. Springer, 23--50.

\bibitem{ciampi1986stratification}
\textsc{Ciampi, A.}, \textsc{Thiffault, J.}, \textsc{Nakache, J.-P.},
  \textsc{and} \textsc{Asselain, B.} (1986).
\newblock Stratification by stepwise regression, correspondence analysis and
  recursive partition: a comparison of three methods of analysis for survival
  data with covariates.
\newblock \emph{Computational statistics \& data analysis\/}~\textbf{4},~3,
  185--204.

\bibitem{ciampi1989recpam}
\textsc{Ciampi, A.}, \textsc{Thiffault, J.}, \textsc{and} \textsc{Sagman, U.}
  (1989).
\newblock Recpam: a computer program for recursive partition amalgamation for
  censored survival data and other situations frequently occurring in
  biostatistics. ii. applications to data on small cell carcinoma of the lung
  (sccl).
\newblock \emph{Computer methods and programs in biomedicine\/}~\textbf{30},~4,
  283--296.

\bibitem{cui2017tree}
\textsc{Cui, Y.}, \textsc{Zhu, R.}, \textsc{and} \textsc{Kosorok, M.}
\newblock Tree based weighted learning for estimating individualized treatment
  rules with censored data.
\newblock \emph{Electronic Journal of Statistics\/}.
\newblock Just Accepted.

\bibitem{cuzick1985asymptotic}
\textsc{Cuzick, J.} (1985).
\newblock Asymptotic properties of censored linear rank tests.
\newblock \emph{The Annals of Statistics\/}, 133--141.

\bibitem{davis1989exponential}
\textsc{Davis, R.~B.} \textsc{and} \textsc{Anderson, J.~R.} (1989).
\newblock Exponential survival trees.
\newblock \emph{Statistics in Medicine\/}~\textbf{8},~8, 947--961.

\bibitem{eng2005sample}
\textsc{Eng, K.~H.} \textsc{and} \textsc{Kosorok, M.~R.} (2005).
\newblock A sample size formula for the supremum log-rank statistic.
\newblock \emph{Biometrics\/}~\textbf{61},~1, 86--91.

\bibitem{fan2006trees}
\textsc{Fan, J.}, \textsc{Su, X.-G.}, \textsc{Levine, R.~A.}, \textsc{Nunn,
  M.~E.}, \textsc{and} \textsc{LeBlanc, M.} (2006).
\newblock Trees for correlated survival data by goodness of split, with
  applications to tooth prognosis.
\newblock \emph{Journal of the American Statistical
  Association\/}~\textbf{101},~475, 959--967.

\bibitem{fleming2011counting}
\textsc{Fleming, T.~R.} \textsc{and} \textsc{Harrington, D.~P.} (2011).
\newblock \emph{Counting processes and survival analysis}.  Vol.~\textbf{169}.
\newblock John Wiley \& Sons.

\bibitem{genuer2012variance}
\textsc{Genuer, R.} (2012).
\newblock Variance reduction in purely random forests.
\newblock \emph{Journal of Nonparametric Statistics\/}~\textbf{24},~3,
  543--562.

\bibitem{geurts2006extremely}
\textsc{Geurts, P.}, \textsc{Ernst, D.}, \textsc{and} \textsc{Wehenkel, L.}
  (2006).
\newblock Extremely randomized trees.
\newblock \emph{Machine learning\/}~\textbf{63},~1, 3--42.

\bibitem{gill1980censoring}
\textsc{Gill, R.~D.} (1980).
\newblock Censoring and stochastic integrals.
\newblock \emph{Statistica Neerlandica\/}~\textbf{34},~2, 124--124.

\bibitem{gordon1985tree}
\textsc{Gordon, L.} \textsc{and} \textsc{Olshen, R.~A.} (1985).
\newblock Tree-structured survival analysis.
\newblock \emph{Cancer treatment reports\/}~\textbf{69},~10, 1065--1069.

\bibitem{green2012modeling}
\textsc{Green, D.~P.} \textsc{and} \textsc{Kern, H.~L.} (2012).
\newblock Modeling heterogeneous treatment effects in survey experiments with
  bayesian additive regression trees.
\newblock \emph{Public opinion quarterly\/}~\textbf{76},~3, 491--511.

\bibitem{hothorn2005survival}
\textsc{Hothorn, T.}, \textsc{B{\"u}hlmann, P.}, \textsc{Dudoit, S.},
  \textsc{Molinaro, A.}, \textsc{and} \textsc{Van Der~Laan, M.~J.} (2005).
\newblock Survival ensembles.
\newblock \emph{Biostatistics\/}~\textbf{7},~3, 355--373.

\bibitem{hothorn2006unbiased}
\textsc{Hothorn, T.}, \textsc{Hornik, K.}, \textsc{and} \textsc{Zeileis, A.}
  (2006).
\newblock Unbiased recursive partitioning: A conditional inference framework.
\newblock \emph{Journal of Computational and Graphical
  Statistics\/}~\textbf{15},~3, 651--674.

\bibitem{hothorn2004bagging}
\textsc{Hothorn, T.}, \textsc{Lausen, B.}, \textsc{Benner, A.}, \textsc{and}
  \textsc{Radespiel-Tr{\"o}ger, M.} (2004).
\newblock Bagging survival trees.
\newblock \emph{Statistics in medicine\/}~\textbf{23},~1, 77--91.

\bibitem{huang2016longitudinal}
\textsc{Huang, L.}, \textsc{Jin, Y.}, \textsc{Gao, Y.}, \textsc{Thung, K.-H.},
  \textsc{Shen, D.}, \textsc{Initiative, A. D.~N.}, \textsc{and}
  \textsc{others}. (2016).
\newblock Longitudinal clinical score prediction in alzheimer's disease with
  soft-split sparse regression based random forest.
\newblock \emph{Neurobiology of aging\/}~\emph{46}, 180--191.

\bibitem{ishwaran2019random}
\textsc{Ishwaran, H.} \textsc{and} \textsc{Kogalur, U.} (2019).
\newblock \emph{Random Forests for Survival, Regression, and Classification
  (RF-SRC)}.
\newblock R package version 2.8.0,
  https://cran.r-project.org/package=randomForestSRC.

\bibitem{ishwaran2010consistency}
\textsc{Ishwaran, H.} \textsc{and} \textsc{Kogalur, U.~B.} (2010).
\newblock Consistency of random survival forests.
\newblock \emph{Statistics \& Probability Letters\/}~\textbf{80},~13,
  1056--1064.

\bibitem{ishwaran2008random}
\textsc{Ishwaran, H.}, \textsc{Kogalur, U.~B.}, \textsc{Blackstone, E.~H.},
  \textsc{and} \textsc{Lauer, M.~S.} (2008).
\newblock Random survival forests.
\newblock \emph{The Annals of Applied Statistics\/}, 841--860.

\bibitem{kaplan1958nonparametric}
\textsc{Kaplan, E.~L.} \textsc{and} \textsc{Meier, P.} (1958).
\newblock Nonparametric estimation from incomplete observations.
\newblock \emph{Journal of the American statistical
  association\/}~\textbf{53},~282, 457--481.

\bibitem{klusowski2018complete}
\textsc{Klusowski, J.~M.} (2018).
\newblock Complete analysis of a random forest model.
\newblock \emph{arXiv preprint arXiv:1805.02587\/}.

\bibitem{krketowska2004dipolar}
\textsc{Kr{\k{e}}towska, M.} (2004).
\newblock Dipolar regression trees in survival analysis.
\newblock \emph{Biocybernetics and biomedical engineering\/}~\emph{24}, 25--33.

\bibitem{krketowska2006random}
\textsc{Kr{\k{e}}towska, M.} (2006).
\newblock Random forest of dipolar trees for survival prediction.
\newblock In \emph{International Conference on Artificial Intelligence and Soft
  Computing}. Springer, 909--918.

\bibitem{leblanc1992relative}
\textsc{LeBlanc, M.} \textsc{and} \textsc{Crowley, J.} (1992).
\newblock Relative risk trees for censored survival data.
\newblock \emph{Biometrics\/}, 411--425.

\bibitem{lin2006random}
\textsc{Lin, Y.} \textsc{and} \textsc{Jeon, Y.} (2006).
\newblock Random forests and adaptive nearest neighbors.
\newblock \emph{Journal of the American Statistical
  Association\/}~\textbf{101},~474, 578--590.

\bibitem{linero2016bayesian}
\textsc{Linero, A.~R.} (2016).
\newblock Bayesian regression trees for high dimensional prediction and
  variable selection.
\newblock \emph{Journal of the American Statistical
  Association\/}~just-accepted.

\bibitem{liu2008cancer}
\textsc{Liu, K.-H.} \textsc{and} \textsc{Huang, D.-S.} (2008).
\newblock Cancer classification using rotation forest.
\newblock \emph{Computers in biology and medicine\/}~\textbf{38},~5, 601--610.

\bibitem{loh1991survival}
\textsc{Loh, W.-Y.} (1991).
\newblock Survival modeling through recursive stratification.
\newblock \emph{Computational statistics \& data analysis\/}~\textbf{12},~3,
  295--313.

\bibitem{mentch2014ensemble}
\textsc{Mentch, L.} \textsc{and} \textsc{Hooker, G.}
\newblock Ensemble trees and {CLT}s: Statistical inference for supervised
  learning.
\newblock \emph{arXiv preprint arXiv:1404.6473\/}.

\bibitem{menze2011oblique}
\textsc{Menze, B.~H.}, \textsc{Kelm, B.~M.}, \textsc{Splitthoff, D.~N.},
  \textsc{Koethe, U.}, \textsc{and} \textsc{Hamprecht, F.~A.} (2011).
\newblock On oblique random forests.
\newblock In \emph{Joint European Conference on Machine Learning and Knowledge
  Discovery in Databases}. Springer, 453--469.

\bibitem{molinaro2004tree}
\textsc{Molinaro, A.~M.}, \textsc{Dudoit, S.}, \textsc{and} \textsc{Van~der
  Laan, M.~J.} (2004).
\newblock Tree-based multivariate regression and density estimation with
  right-censored data.
\newblock \emph{Journal of Multivariate Analysis\/}~\textbf{90},~1, 154--177.

\bibitem{murphy1997maximum}
\textsc{Murphy, S.}, \textsc{Rossini, A.}, \textsc{and} \textsc{van~der Vaart,
  A.~W.} (1997).
\newblock Maximum likelihood estimation in the proportional odds model.
\newblock \emph{Journal of the American Statistical
  Association\/}~\textbf{92},~439, 968--976.

\bibitem{nelson1969hazard}
\textsc{Nelson, W.} (1969).
\newblock Hazard plotting for incomplete failure data(multiply censored data
  plotting on various type hazard papers for engineering information on time to
  failure distribution).
\newblock \emph{Journal of Quality Technology\/}~\emph{1}, 27--52.

\bibitem{ofek2013improving}
\textsc{Ofek, N.}, \textsc{Caragea, C.}, \textsc{Rokach, L.}, \textsc{Biyani,
  P.}, \textsc{Mitra, P.}, \textsc{Yen, J.}, \textsc{Portier, K.}, \textsc{and}
  \textsc{Greer, G.} (2013).
\newblock Improving sentiment analysis in an online cancer survivor community
  using dynamic sentiment lexicon.
\newblock In \emph{2013 International Conference on Social Intelligence and
  Technology (SOCIETY)}. IEEE, 109--113.

\bibitem{pollard2012convergence}
\textsc{Pollard, D.} (2012).
\newblock \emph{Convergence of stochastic processes}.
\newblock Springer Science \& Business Media.

\bibitem{rice2010cancer}
\textsc{Rice, T.~W.}, \textsc{Rusch, V.~W.}, \textsc{Ishwaran, H.},
  \textsc{and} \textsc{Blackstone, E.~H.} (2010).
\newblock Cancer of the esophagus and esophagogastric junction.
\newblock \emph{Cancer\/}~\textbf{116},~16, 3763--3773.

\bibitem{robins1992recovery}
\textsc{Robins, J.~M.} \textsc{and} \textsc{Rotnitzky, A.} (1992).
\newblock Recovery of information and adjustment for dependent censoring using
  surrogate markers.
\newblock In \emph{AIDS epidemiology}. Springer, 297--331.

\bibitem{rodriguez2006rotation}
\textsc{Rodriguez, J.~J.}, \textsc{Kuncheva, L.~I.}, \textsc{and}
  \textsc{Alonso, C.~J.} (2006).
\newblock Rotation forest: A new classifier ensemble method.
\newblock \emph{IEEE transactions on pattern analysis and machine
  intelligence\/}~\textbf{28},~10, 1619--1630.

\bibitem{rotnitzky2005inverse}
\textsc{Rotnitzky, A.} \textsc{and} \textsc{Robins, J.} (2005).
\newblock Inverse probability weighted estimation in survival analysis.
\newblock \emph{Encyclopedia of Biostatistics\/}~\emph{4}, 2619--2625.

\bibitem{rubin2007doubly}
\textsc{Rubin, D.} \textsc{and} \textsc{van~der Laan, M.~J.} (2007).
\newblock A doubly robust censoring unbiased transformation.
\newblock \emph{The international journal of biostatistics\/}~\textbf{3},~1.

\bibitem{scornet2015consistency}
\textsc{Scornet, E.}, \textsc{Biau, G.}, \textsc{Vert, J.-P.}, \textsc{and}
  \textsc{others}. (2015).
\newblock Consistency of random forests.
\newblock \emph{The Annals of Statistics\/}~\textbf{43},~4, 1716--1741.

\bibitem{segal1988regression}
\textsc{Segal, M.~R.} (1988).
\newblock Regression trees for censored data.
\newblock \emph{Biometrics\/}, 35--47.

\bibitem{starling2014unexpected}
\textsc{Starling, R.~C.}, \textsc{Moazami, N.}, \textsc{Silvestry, S.~C.},
  \textsc{Ewald, G.}, \textsc{Rogers, J.~G.}, \textsc{Milano, C.~A.},
  \textsc{Rame, J.~E.}, \textsc{Acker, M.~A.}, \textsc{Blackstone, E.~H.},
  \textsc{Ehrlinger, J.}, \textsc{and} \textsc{others}. (2014).
\newblock Unexpected abrupt increase in left ventricular assist device
  thrombosis.
\newblock \emph{New England Journal of Medicine\/}~\textbf{370},~1, 33--40.

\bibitem{steingrimsson2016doubly}
\textsc{Steingrimsson, J.~A.}, \textsc{Diao, L.}, \textsc{Molinaro, A.~M.},
  \textsc{and} \textsc{Strawderman, R.~L.} (2016).
\newblock Doubly robust survival trees.
\newblock \emph{Statistics in Medicine\/}~\textbf{35},~20, 3595--3612.

\bibitem{steingrimsson2017censoring}
\textsc{Steingrimsson, J.~A.}, \textsc{Diao, L.}, \textsc{and}
  \textsc{Strawderman, R.~L.} (2017).
\newblock Censoring unbiased regression trees and ensembles.
\newblock \emph{Journal of the American Statistical
  Association\/}~just-accepted.

\bibitem{su2004multivariate}
\textsc{Su, X.} \textsc{and} \textsc{Fan, J.} (2004).
\newblock Multivariate survival trees: a maximum likelihood approach based on
  frailty models.
\newblock \emph{Biometrics\/}~\textbf{60},~1, 93--99.

\bibitem{doi:10.1080/01621459.2017.1319839}
\textsc{Wager, S.} \textsc{and} \textsc{Athey, S.} (2018).
\newblock Estimation and inference of heterogeneous treatment effects using
  random forests.
\newblock \emph{Journal of the American Statistical
  Association\/}~\textbf{113},~523, 1228--1242.
\newblock https://doi.org/10.1080/01621459.2017.1319839.

\bibitem{wager2014confidence}
\textsc{Wager, S.}, \textsc{Hastie, T.}, \textsc{and} \textsc{Efron, B.}
  (2014).
\newblock Confidence intervals for random forests: the jackknife and the
  infinitesimal jackknife.
\newblock \emph{Journal of Machine Learning Research\/}~\textbf{15},~1,
  1625--1651.

\bibitem{wager2015adaptive}
\textsc{Wager, S.} \textsc{and} \textsc{Walther, G.} (2015).
\newblock Adaptive concentration of regression trees, with application to
  random forests.
\newblock \emph{arXiv preprint arXiv:1503.06388\/}.

\bibitem{wright2017ranger}
\textsc{Wright, M.~N.} \textsc{and} \textsc{Ziegler, A.} (2017).
\newblock {ranger}: A fast implementation of random forests for high
  dimensional data in {C++} and {R}.
\newblock \emph{Journal of Statistical Software\/}~\textbf{77},~1, 1--17.

\bibitem{zhang1995splitting}
\textsc{Zhang, H.} (1995).
\newblock Splitting criteria in survival trees.
\newblock In \emph{Statistical Modelling}. Springer, 305--313.

\bibitem{zhou1991some}
\textsc{Zhou, M.} (1991).
\newblock Some properties of the {K}aplan-{M}eier estimator for independent
  nonidentically distributed random variables.
\newblock \emph{The Annals of Statistics\/}, 2266--2274.

\bibitem{zhu2012recursively}
\textsc{Zhu, R.} \textsc{and} \textsc{Kosorok, M.~R.} (2012).
\newblock Recursively imputed survival trees.
\newblock \emph{Journal of the American Statistical
  Association\/}~\textbf{107},~497, 331--340.

\bibitem{zhu2015reinforcement}
\textsc{Zhu, R.}, \textsc{Zeng, D.}, \textsc{and} \textsc{Kosorok, M.~R.}
  (2015).
\newblock Reinforcement learning trees.
\newblock \emph{Journal of the American Statistical
  Association\/}~\textbf{110},~512, 1770--1784.

\end{thebibliography}

\end{document}